\documentclass{article} % Base class for articles (reports, short papers)

\usepackage{silence} % Selectively suppress warnings (use sparingly).
\usepackage{arxiv}

\usepackage[utf8]{inputenc} % Allow UTF-8 input (essential for non-ASCII characters)
\usepackage[T1]{fontenc}    % Use 8-bit T1 fonts (better PDF copy/paste and hyphenation)
\usepackage{lmodern}        % Latin Modern font (vectorized Computer Modern)
\usepackage{slantsc}        % Slanted small caps for emphasis
\usepackage{dsfont}         % Double-stroke font (e.g., sets: ℝ, ℤ via \mathds)
\usepackage{upgreek}        % Upright Greek symbols (e.g., constants like \upmu)

\usepackage[english,provide=*,noconfigs]{babel} % Language settings (hyphenation, captions) – modern internal data, no config files
\usepackage{hyphenat}       % Enhanced hyphenation control

\usepackage{amsmath,amssymb,amsbsy,amsthm,amstext,amscd,amsfonts} % Core AMS math extensions
\usepackage{mathtools}          % Enhancements to amsmath (e.g., \coloneqq, paired delimiters)
\mathtoolsset{showonlyrefs=true, showmanualtags=true} % Number only referenced equations
\usepackage{nicematrix}         % Enhanced matrix/table arrays (borders, blocks)
\usepackage{latexsym}           % Additional math symbols (legacy; safe with AMS)

\usepackage[
bookmarks=true,            % Create bookmarks in PDF
pagebackref=true,          % Back-references from bibliography to citing pages
unicode=true,              % Unicode in PDF strings
pdfencoding=auto,          % Auto-detect encoding
pdfstartview={FitH}        % Start PDF view fitted horizontally
]{hyperref}

\makeatletter
\renewcommand*\backref[1]{%
	\ifx#1\relax
	\else
	\@tempcnta=0
	\@for\@next:=#1\do{\advance\@tempcnta by 1}%
	\ifnum\@tempcnta=1
	[Cited on p.~#1]%
	\else
	[Cited on pp.~#1]%
	\fi
	\fi
}
\makeatother

\newcounter{test}							% Counter for automatic "Test <n>" numbering

\makeatletter
\newcommand{\testsubsection}{%
	\refstepcounter{test}					% Step 'test' counter + set ref anchor (hyperref aware)
	\phantomsection							% Extra anchor at this location for robust linking
	\subsection*{Test~\thetest}				% Unnumbered subsection heading "Test <n>"
	\def\@currentlabelname{Test~\thetest}	% Text used by \nameref for this label
	\label{subsec:test\thetest}				% Label: subsec:test1, subsec:test2, ...
}
\makeatother

\usepackage{xparse}           % Create flexible commands/interfaces
\usepackage{url}              % Robust URL typesetting (already covered by hyperref; kept for clarity)

\usepackage{booktabs}       % Professional table rules (\toprule, \midrule, \bottomrule)
\usepackage{multirow}       % Multi-row cells in tables

\usepackage[nice]{nicefrac} % Inline diagonal fractions (e.g., 1/2)
\usepackage[nopatch=footnote]{microtype} % Improves justification; no footnote patch (avoid conflicts)
\usepackage{tcolorbox}      % Colored/shaded boxes for highlights, theorems, algorithms
\usepackage{ragged2e}       % Advanced ragged-right control (e.g., \RaggedRight)

\usepackage{graphicx}       	 % Include graphics: \includegraphics
\usepackage{float}          	 % Improved float placement control (e.g., [H])
\usepackage[section]{placeins}   % Automatically insert float barriers at each \section
\graphicspath{{Figures/}}   	 % Directory for figures (search path)
\usepackage{adjustbox}      	 % Scale/trim boxes (works with graphics and tables)
\usepackage{caption}        	 % Customize caption appearance
\usepackage{subcaption}     	 % Subfigures with captions (modern alternative to subfigure)

\usepackage{tikz}           % High-quality vector graphics/drawings
\usepackage{algorithm}        % Floating algorithm environment
\usepackage{algpseudocode}    % Pseudocode macros (algorithmicx variant)
\algrenewcommand\algorithmicindent{1.0em} % Visual consistency


\usepackage[dvipsnames]{xcolor} % Extended color names (e.g., NavyBlue, ForestGreen)

\usepackage[inline]{enumitem} % Inline lists and fine-grained list control

\usepackage[nottoc]{tocbibind} % Include bibliography in TOC
\usepackage[square,sort,comma,numbers,sectionbib]{natbib} % Flexible numeric citations
\usepackage{doi}               % DOI hyperlinks

\newcommand{\citepbf}[2]{%
	\ifx\empty#1\relax
	\PackageError{citepbf}{Bold label argument is missing}{}%
	\else
	\citep[\textbf{#1}]{#2}%
	\fi
}

\usepackage{titlesec}        % Section title customization
\usepackage{titlecaps}       % Smart Title Case (with exceptions)
\Addlcwords{a an the and but or nor for yet so at by in of on to up as with}

\titleformat{\section} % Customize the format of \section titles
{\normalfont\Large\bfseries} % Use normal font, large size, bold series
{\thesection} % Print the section number
{1em} % Add 1em of space between the number and the title
{\titlecap} % Apply \titlecap to capitalize the title with exceptions

\titleformat{\subsection} % Customize the format of \subsection titles
{\normalfont\large\bfseries} % Use normal font, large size, bold series
{\thesubsection} % Print the subsection number
{1em} % Add 1em of space between the number and the title
{\titlecap} % Apply \titlecap to capitalize the title with exceptions

\usepackage{appendix}        % Appendix management

\usepackage{orcidlink}       % Link ORCID profiles
\usepackage{mathrsfs}        % Script-style math fonts (\mathscr)
\usepackage{indentfirst}     % Indent first paragraph of sections

\newcommand{\refitem}[2]{%
	\hyperref[#2]{#1~\ref*{#2}}% \ref* prints the bare number; \hyperref makes it a single link
}

\newcommand{\figref}[1]{%
	\refitem{Figure}{#1}%
}

\newcommand{\figsubref}[2]{%
	\hyperref[#2]{Figure~\ref*{#1}(\subref*{#2})}% one clickable "Figure <main>(<sub>)"
}

\newcommand{\lemref}[1]{%
	\refitem{Lemma}{#1}%
}

\newcommand{\thmref}[1]{%
	\refitem{Theorem}{#1}%
}

\newcommand{\remref}[1]{%
	\refitem{Remark}{#1}%
}

\newcommand{\testlabel}[1]{%
	subsec:test#1%
}

\newcommand{\testref}[1]{%
	\refitem{Test}{\testlabel{#1}}%
}

\numberwithin{equation}{section} % Equation numbers as <section>.<eq>

\theoremstyle{plain} % Sets the default theorem style to 'plain' (bold title, italic body) — standard for theorems and lemmas.

\newtheorem{theorem}{Theorem}[section] % Defines 'Theorem' environment; numbering restarts with each section (e.g., Theorem 2.1).
\newtheorem*{theorem*}{Theorem}        % Defines an unnumbered theorem environment (for general results without labels).

\newtheorem{corollary}[theorem]{Corollary} % Defines 'Corollary' environment; shares numbering sequence with Theorem (e.g., Theorem 2.1 → Corollary 2.2).
\newtheorem{lemma}[theorem]{Lemma}         % Defines 'Lemma' environment; shares numbering with Theorem for logical consistency.
\newtheorem*{lemma*}{Lemma}                % Defines an unnumbered 'Lemma' environment (for auxiliary or illustrative lemmas).

\newtheorem{remark}[theorem]{Remark}       % Defines 'Remark' environment; shares numbering with Theorem — useful for commentary following results.
\newtheorem*{remark*}{Remark}              % Defines an unnumbered 'Remark' environment (for informal notes or observations).

\newtheorem*{definition*}{Definition}        % Defines an unnumbered 'Definition' environment (for general background or context).

\makeatletter % Allow use of internal macros (e.g., \@addpunct).
\renewenvironment{proof}[1][\proofname]{% % Redefine 'proof' (title defaults to \proofname).
	\par\pushQED{\qed}\normalfont%          % Start paragraph; register QED; use upright font.
	\topsep6\p@\@plus6\p@\relax             % Compact vertical spacing around the block.
	\trivlist\item[\hskip\labelsep\bfseries#1\@addpunct{.}]% % Bold “Proof.” label with period.
	\ignorespaces                            % Prevent stray space before the body.
}{%
	\popQED\endtrivlist\@endpefalse          % Place QED; close list; tidy end-of-paragraph.
}
\makeatother % Restore normal category codes.

\long\def\d{\mathrm{d}}          % Upright differential 'd' for integrals/derivatives (e.g., \int f\,\d x).
\long\def\bigO{\mathcal{O}}      % Big-O notation in calligraphic O (e.g., \bigO(h^2)).

\long\def\A{\mathcal{A}}         % Calligraphic A for operators/sets (e.g., operator \A).
\long\def\B{\mathcal{B}}         % Calligraphic B for spaces/sets (e.g., \B(\Omega)).
\long\def\I{\mathcal{I}}         % Calligraphic I (e.g., index set or functional).
\long\def\H{\mathcal{H}}         % Calligraphic H (often Hilbert space \H); NOTE: overrides accent macro \H{o} (Hungarian umlaut) in text mode.
\long\def\L{\mathcal{L}}         % Calligraphic L (e.g., operator \L, Lebesgue spaces \L^p); NOTE: overrides Polish letter \L.
\long\def\nat{\mathbb{N}}        % Blackboard bold N for natural numbers.
\long\def\real{\mathbb{R}}       % Blackboard bold R for real numbers.
\long\def\S{\mathcal{S}}         % Calligraphic S (e.g., Schwartz space \S); NOTE: overrides \S (§ section symbol).
\long\def\T{\mathcal{T}}         % Calligraphic T (e.g., linear operator or time map \T).

\long\def\ie{\textit{i.e.}}      % Latin abbreviation “that is”; italicized. (No auto-space—add a trailing space in text.)
\long\def\eg{\textit{e.g.}}      % Latin abbreviation “for example”; italicized. (Add space after use.)
\long\def\cf{\textit{cf.}}       % Latin “compare”; italicized. (Add space after use.)
\long\def\apriori{\textit{a priori}} % Latin phrase “from the former”; italicized. (Add space after use.)

\newcommand{\ansatz}{\textit{ansatz}}  % Singular, mid-sentence; typeset in italics to follow mathematical English conventions.
\newcommand{\brac}[1]{\left( #1 \right)}                % Auto-sized parentheses: \brac{...} → ( ... ); pairs well with tall content.
\newcommand{\norm}[1]{\left\lVert #1 \right\rVert}      % Norm with double vertical bars; requires amsmath; auto-sized.
\newcommand{\abs}[1]{\left\lvert #1 \right\rvert}       % Absolute value with single vertical bars; auto-sized.
\newcommand{\qbrac}[1]{\left[ #1 \right]}               % Auto-sized brackets: \qbrac{...} → [ ... ].
\newcommand{\tps}[1]{\ensuremath{{#1}^{\top}}}          % Transpose superscript ‘ᵀ’; \ensuremath allows use in text and math.
\newcommand{\vm}[1]{\ensuremath{\boldsymbol{#1}}}       % Bold math symbol (vectors/matrices) with correct italic correction.
\newcommand{\diag}[1]{\ensuremath{\operatorname{diag}\!\brac{#1}}} % ‘diag(·)’ as proper math operator; \! tightens spacing to parentheses.

\usepackage{xparse}

\newcounter{constant}
\newcounter{mconst}
\newcommand{\globalsecondarg}{}  % Initially empty

\newcommand{\cst}[1]{%
  \refstepcounter{constant}% Step the counter for constants
  \expandafter\xdef\csname #1\endcsname{c_{\arabic{constant}}}% Define the constant with its name
}

\newcommand{\mcst}[1]{%
  \refstepcounter{mconst}% Step the counter for mconsts
  \expandafter\xdef\csname #1\endcsname{M_{\arabic{mconst}}}% Define the mconst with its name
}

\newcommand{\cstvaridx}[2]{%
  \expandafter\xdef\csname #1\endcsname{c_{\arabic{constant} + #2}}% Define the constant with variable index
  \expandafter\xdef\csname #1variable\endcsname{#2}% Store the variable for later reference
}

\newcommand{\detectvar}[1]{%
  \csname #1variable\endcsname
}

\newcommand{\cststatic}[1]{%
  \cstvaridx{#1}{j}% Define the constant using 'j' as the second argument
  \renewcommand{\globalsecondarg}{j}% Store 'j' as the global second argument
}

\newcommand{\detectstaticarg}{%
  \globalsecondarg% Return the globally stored second argument ('j')
}

\cst{cstCoper}
\cst{cstBoper}
\cst{cstrmkthree}
\cst{cstrmkfora}
\cst{cstrmkforb}

\mcst{cstmone}
\mcst{cstmtwo}
\mcst{cstmthree}
\mcst{cstmfour}

\cst{cstDeltaIneq}
\cst{cstGammaIneq}
\cst{cstFinalGammaIneq}

\mcst{cstmfive}
\mcst{cstmsix}
\cst{cstmainalph}
\cst{csttildelama}
\cst{csttildelamb}
\cst{csthatlam}
\cst{cstmaxMk}
\cst{cstlemtwoa}
\cst{cstlemtwob}
\cst{cstlemtwoc}
\cst{cstlemtwod}
\cst{cstykplusonefin}
\cst{cstmaxab}
\cst{cstfourthpoly}
\cst{cstsqrtfracinxi}
\cst{cstxikfin}
\cst{cstoverlinetau}

\cststatic{cststatthmone}  % Define the constant with variable 'j'
\cst{inthmone}
\cst{inthmtwo}
\cststatic{cststatthmtwo}  % Define the constant with variable 'j'
\cst{inthmthree}
\cst{inthmfour}

\cstvaridx{boundremainders}{j}  % Define the constant with variable 'j'

\cst{inthmremone}
\cst{inthmremtwo}
\cst{inthmremthree}
\cst{inthmremfour}
\cst{inthmsumonetwo}
\cst{inthmmaxhalfa}
\cst{inthmmaxa}
\cst{inthmtilgfin}
\cst{inthmnutwo}
\cst{inthmsumovrldlt}
\cst{inthmsumthreefour}
\cst{inthmfrsumoverldlt}
\cst{inthmsumovrlndlt}
\cst{inthmassoctau}
\cst{inthmalphoneone}
\cst{inthmagammoneone}
\cst{inthmalphtwoone}
\cst{inthmagammtwoone}
\cst{inthmoverldltone}

\cststatic{cstremfollthm}
\cst{cstremfollthmone}
\cst{cstremfollthmtwo}
\cst{cstcorolderivatone}
\cst{cstcorolderivattwo}
\cststatic{cstremholderone}
\cst{inremcstholderone}
\cst{inremcstholdertwo}
\cststatic{cstremholdertwo}
\cst{inremcstholderthree}
\cst{inremcstholderfour}

\cst{cststabone}
\cst{cststabtwo}

\cststatic{cstthmconvspecopone}
\cst{cstspecone}
\cst{cstspectwo}
\cststatic{cstthmconvspecoptwo}
\cst{cstspecthree}
\cst{cstspecfour}

\cst{lgsuetin}
\cst{lgdiffw}
\cst{lgerrorgal}
\cst{lgerrunifnorm}
\cst{lgboundbtwo}
\cst{lgfinalest}

\def\articletitle{On the Numerical Treatment of an Abstract Nonlinear System of Coupled Hyperbolic Equations Associated with the Timoshenko Model}
\def\articleauthorR{Jemal~Rogava}
\def\articleauthorV{Zurab~Vashakidze}

\title{\articletitle}

\date{} 					% Or removing it

\author{
	{\articleauthorR}\,\orcidlink{0000-0001-9460-4283}\\
	Faculty of Exact and Natural Sciences,\\
	Ivane Javakhishvili Tbilisi State University (TSU),\\
	Ilia Vekua Institute of Applied Mathematics (VIAM),\\
	11 University St., Tbilisi 0186, Georgia\\
	\href{mailto:jemal.rogava@tsu.ge}{\texttt{jemal.rogava@tsu.ge}}\\
	\And
	{\articleauthorV}\,\orcidlink{0000-0001-8736-6213}\\
	School of Science and Technology, The University of Georgia (UG),\\
	77a M. Kostava St., Tbilisi 0171, Georgia\\[6pt]
	Ilia Vekua Institute of Applied Mathematics (VIAM),\\
	Ivane Javakhishvili Tbilisi State University (TSU),\\
	11 University St., Tbilisi 0186, Georgia\\
	\href{mailto:z.vashakidze@ug.edu.ge}{\texttt{z.vashakidze@ug.edu.ge}}, \href{mailto:zurab.vashakidze@tsu.ge}{\texttt{zurab.vashakidze@tsu.ge}}
}

\renewcommand{\shorttitle}{Numerical Treatment of a Nonlinear System of Coupled Hyperbolic Equations}

\hypersetup{
	pdftitle={\articletitle},
	pdfsubject={Numerical Analysis (math.NA); Analysis of PDEs (math.AP)},
	pdfauthor={\articleauthorR,~\articleauthorV},
	pdfkeywords={Nonlinear Kirchhoff string equation, Cauchy problem, Three-layer semi-discrete scheme},
	hidelinks,           % Remove link borders; colorlinks overrides this
	colorlinks=true,     % Colored text links (print-friendly with care)
	linkcolor=blue,
	citecolor=red,
	filecolor=cyan,
	urlcolor=teal,
}

\allowdisplaybreaks % Allow multi-line displayed equations to break across pages

\begin{document}
	\maketitle
	\vfill
	\begin{abstract}\label{abstract}
		The present work addresses the Cauchy problem for an abstract nonlinear system of coupled hyperbolic equations associated with the Timoshenko model in a real Hilbert space. Our purpose is to develop and delve into a temporal discretization scheme for approximating a solution to this problem. To this end, we propose a symmetric three-layer semi-discrete time-stepping scheme in which the nonlinear term is evaluated at the temporal midpoint. As a result, at each time step, this approach reduces the original nonlinear problem to a linear one and enables parallel computation of its solution. Convergence is proved, and second-order accuracy with respect to the time-step size is established on a local temporal interval. The proposed scheme is applied to a spatially one-dimensional nonlinear dynamic Timoshenko beam system, and the results obtained for the abstract nonlinear system are extended to this setting. A Legendre–Galerkin spectral approximation is employed for the spatial discretization. By taking differences of Legendre polynomials within the Galerkin framework, the resulting linear system is sparse and can be efficiently decoupled. The convergence of the method is also investigated. Finally, several numerical experiments on carefully chosen benchmark problems are conducted to validate the proposed approach and to confirm the theoretical findings.
	\end{abstract}
	\vfill
	% keywords
	\keywords{{Abstract nonlinear hyperbolic system} \and {Nonlinear Timoshenko beam system} \and {Three-layer semi-discrete scheme} \and {Legendre–Galerkin spectral method}.}
	
	% The 2020 Mathematics Subject Classification (MSC2020)
	\msc{{35L53} \and {35L90} \and {65J08} \and {65J15} \and {65M06} \and {65M60} \and {65N12} \and {65N22} \and {65Q30}.}
	
	%\newpage
	\section{Introduction}\label{sec:intro}
	\subsection{Formulation of the Problem}\label{subsec:statement}
	Motivated by the theory of beams, plates, and shells, this work presents a natural abstract generalization of the nonlinear dynamic Timoshenko beam system. As a second-generation beam theory, the Timoshenko model \cite{Timoshenko1928} extends beyond the classical Euler-Bernoulli (Kirchhoff-Love) theory by accounting for shear deformation and rotational bending. This approach is particularly suitable for analyzing thick beams, sandwich composite beams, and beams subjected to high-frequency excitation where the wavelength is comparable to the beam thickness. We paid attention to this model, since it has wide-ranging applications in structural and mechanical engineering, stemming from the nonlinear theory of elasticity.
	
	Let us consider the Cauchy problem associated with a nonlinear abstract system of coupled hyperbolic equations defined in the real Hilbert space $\H$:
	\begin{subequations}
		\begin{gather}
			\frac{\d^2 u\brac{t}}{\d t^2} + \brac{\alpha + \beta {\norm{A^{\nicefrac{1}{2}} u}}^2} A u\brac{t} + a_1 B v\brac{t} = f_1\brac{t}\,,\quad t \in \qbrac{0,T}\,,\label{eq:abst_timoshenko1}\\
			\frac{\d^2 v\brac{t}}{\d t^2} + \gamma A v\brac{t} + \delta C v\brac{t} + a_2 B u\brac{t} = f_2\brac{t}\,,\label{eq:abst_timoshenko2}\\
			u\brac{0} = \varphi_0\,,\quad u^{\prime}\brac{0} = \varphi_1\,,\quad v\brac{0} = \psi_0\,,\quad v^{\prime}\brac{0} = \psi_1\,.\label{eq:abst_timoshenko3}
		\end{gather}
	\end{subequations}
	
	Let $\alpha$, $\beta$, $\gamma$, and $\delta$ be positive constants, and let $a_1$ and $a_2$ be constants unrestricted in sign. Consider $A$ to be a self-adjoint, positive-definite operator (independent of $t$ and generally unbounded) with domain $D\brac{A}$, which is everywhere dense in $\H$, {\ie} $\overline{D\brac{A}} = \H$, $A = A^{\ast} \geq \nu I$ with $\nu > 0$ (where $I$ denotes the identity operator). Let $C$ be a symmetric, bounded operator such that $\brac{C \varphi,\varphi} \geq 0$ for all $\varphi \in \H$. The operator $B$ is assumed to be a closed linear operator satisfying the following condition:
	\begin{equation}\label{eq:B_cond}
		{\norm{B \varphi}}^2 \leq \cstBoper^2 \brac{A \varphi,\varphi}\,,\quad \forall \varphi \in D\brac{A} \subset D\brac{B}\,,\quad \cstBoper > 0\,,
	\end{equation}
	here, $\brac{\cdot,\cdot}$ denotes the inner product defined on the Hilbert space $\H$, and $\norm{\cdot}$ denotes the norm associated with this inner product. Moreover, let $\varphi_0$, $\varphi_1$, $\psi_0$, and $\psi_1$ be specified vectors in the Hilbert space $\H$. The unknown functions $u\brac{t}$ and $v\brac{t}$ are continuous and twice continuously differentiable, mapping into $\H$. Additionally, $f_1\brac{t}$ and $f_2\brac{t}$ are specified continuous functions taking values in $\H$.
	
	We next discuss the appearance of the square root of the operator $A$ in equation \eqref{eq:abst_timoshenko1}.
	\begin{remark}
		Since $A$ is a self-adjoint and positive definite operator, there exists a unique square root $A^{\nicefrac{1}{2}}$ (see Kato \citepbf{Theorem V.3.35}{Kato1995}), whose domain contains the domain of $A$. It is clear that, for any vector $u \in D \brac{A}$, the equality $\norm{A^{\nicefrac{1}{2}} u}^2 = \brac{Au,u}$ holds. Let $A$ be a self-adjoint extension in $L^2$ of the symmetric operator $\brac{- \d^2 / \d x^2}$ or of the operator $\brac{- \Delta}$. It is assumed that the domains of these operators consist of $C^2$ class functions satisfying homogeneous boundary conditions. For such functions, using Green's formula (in the one-dimensional case, integration by parts), we obtain that the inner product $\brac{Au,u}$ is equal to the integral of the square of the gradient. Thus, we obtain the Kirchhoff nonlinearity. Hence, this observation naturally motivates the introduction, in the corresponding abstract equation, of a term involving ${\norm{A^{\nicefrac{1}{2}} u}}^2$ as an abstract analogue of the Kirchhoff nonlinearity.
	\end{remark}
	In accordance with the linear case ({\cf} Kre\u{\i}n \citepbf{Chapter~III}{Krein1971}), the vector functions $u\brac{t}$ and $v\brac{t}$, taking values in $\H$ and defined on the interval $\qbrac{0,T}$, are called solutions to the problem \eqref{eq:abst_timoshenko1}-\eqref{eq:abst_timoshenko3} if they fulfill the following conditions:
	\begin{enumerate*}[label=(\alph*)]
		\item $u\brac{t}$ and $v\brac{t}$ are twice continuously differentiable vector functions over the interval $\qbrac{0,T}$;
		\item For each $t \in \qbrac{0,T}$, $u\brac{t} \in D\brac{A}$ and $v\brac{t} \in D\brac{A}$. Moreover, $A u\brac{t}$ and $A v\brac{t}$ are continuous;
		\item The functions $u\brac{t}$ and $v\brac{t}$ satisfy the system of equations \eqref{eq:abst_timoshenko1} and \eqref{eq:abst_timoshenko2} over the interval $\qbrac{0,T}$, along with the prescribed initial conditions \eqref{eq:abst_timoshenko3}.
	\end{enumerate*}
	In this context, continuity and differentiability are considered in relation to a metric defined on the Hilbert space $\H$.
	
	Moving from theory to application, we focus on a specific case of the problem \eqref{eq:abst_timoshenko1}-\eqref{eq:abst_timoshenko3}, corresponding to the spatially one-dimensional initial-boundary value problem \eqref{eq:timosh_spec_nonlinear}-\eqref{eq:timosh_spec_bound_conds}. This model describes the geometrically nonlinear vibrations of a beam and is formulated as a coupled system of nonlinear integro-partial differential equations with homogeneous Dirichlet boundary conditions. The homogeneous version of this nonlinear system was first proposed by Sapir and Reiss in the appendix of \cite{SapirReiss1979} as a mathematical model for the free vibrations of an elastic beam with fixed endpoints. The unknown functions $u$ and $v$ represent the transverse displacement of the beam centerline and the rotational displacement of the cross section, respectively. The constants in the model characterize the physical and mechanical properties of the beam and are determined by its material parameters (the significance of these constants is discussed in \cite{SapirReiss1979} and \cite{Tucsnak1991}).
	
	The same mechanical problem has been analyzed within the framework of the nonlinear Euler-Bernoulli beam theory ({\cf} the articles by Dickey \cite{Dickey1970}, Nayfeh and Mook \cite{Nayfeh1979}, and Woinowsky-Krieger \cite{Woinowsky-Krieger1950}). However, unlike the Timoshenko model, this theory is limited because it does not account for shear deformation and rotary inertia effects. For the resulting initial and boundary value problems, global existence and uniqueness results in appropriate Sobolev spaces were established by Ball \cite{Ball1973} and Dickey \cite{Dickey1970} using the Galerkin method. The equations examined in these studies can be formulated as a semilinear evolution equation in a Hilbert space. This formulation enables the application of semigroup theory to reduce the problem to an integral equation ({\cf} Ball \cite{BallSecond1973} and Fitzgibbon \cite{Fitzgibbon1981}). Although this approach is not applicable to the study of \eqref{eq:abst_timoshenko1}-\eqref{eq:abst_timoshenko3} because the corresponding evolution equation is nonlinear. Issues of existence and uniqueness of solutions to the abstract nonlinear wave equation have been addressed, for instance, in papers of Hochstein-Kind \cite{Hochstein-Kind1988} and Poho\v{z}aev \cite{Pohozhaev1975}.
	
	As far as we know, the issue of solvability in the abstract setting of the dynamic nonlinear Timoshenko beam problem \eqref{eq:abst_timoshenko1}-\eqref{eq:abst_timoshenko3} in a Hilbert space has not been studied. Regarding the Timoshenko beam system \eqref{eq:timosh_spec_nonlinear}-\eqref{eq:timosh_spec_bound_conds}, Tucsnak first raised the question of existence and uniqueness of solutions in \cite{Tucsnak1991} and proved local-in-time solvability under suitable regularity assumptions on the initial data. Subsequently, Ammari \cite{Ammari2002} investigated the global existence and large-time behavior of the system governing the nonlinear vibrations of a Timoshenko beam and, under small initial data, established the global existence of strong solutions together with exponential decay of the associated energy. More recently, Narciso and Cousin \cite{Narciso-Cousin2008} obtained existence and uniqueness results for local solutions in noncylindrical domains by employing the Faedo–Galerkin method. Furthermore, in \cite{Aouragh_at_al_2024}, Aouragh, Segaoui, and Soufyane analyzed the stability of a nonlinear shear beam system and established the well-posedness of the considered model through the Faedo–Galerkin method. In the subsequent work \cite{Aouragh_at_al_2025}, Aouragh, El Baz, and Soufyane investigated a thermoelastic nonlinear shear beam model with thermal dissipation. Using the Faedo–Galerkin method, they established the well-posedness of the system and proved its exponential stability via the multiplier method. For numerical purposes, they employed a finite element discretization in space, combined with the Euler and Crank–Nicolson schemes for temporal discretization.
	
	We would like to state that we are not aware of the works that directly address the design of numerical algorithms and the construction of approximate solutions for the abstract analogue of the Timoshenko system \eqref{eq:abst_timoshenko1}-\eqref{eq:abst_timoshenko3}. Concerning the construction and analysis of numerical schemes for the concrete Timoshenko system \eqref{eq:timosh_spec_nonlinear}-\eqref{eq:timosh_spec_bound_conds}, several articles are devoted to these issues. In particular, Bernardi and Copetti \cite{Bernardi2014,Bernardi2017} proposed a numerical approach for a contact problem arising in the nonlinear dynamic thermoviscoelastic Timoshenko beam model. After establishing the well-posedness of the corresponding system of three equations, they applied finite element discretization in space combined with Euler and Crank–Nicolson time-stepping schemes, derived a priori error estimates for the discrete problem, and presented results from numerical experiments. Peradze \cite{Peradze2004} investigated the homogeneous version of problem \eqref{eq:timosh_spec_nonlinear}-\eqref{eq:timosh_spec_bound_conds} under the assumption that the associated Cauchy data are analytic. The numerical solution was obtained by combining the Galerkin method with a Crank–Nicolson-type difference scheme and a Picard iteration procedure, and convergence and accuracy properties of the resulting algorithm were analyzed. In a subsequent work \cite{Peradze2020}, Peradze considers problem \eqref{eq:timosh_spec_nonlinear}-\eqref{eq:timosh_spec_bound_conds} in a homogeneous setting and proposes a numerical solution based on the finite element method, a modified Crank–Nicolson-type difference scheme, and a Picard-type iteration process. The total error of the proposed method is estimated. In the paper by Hauck, M{\aa}lqvist, and Rupp \cite{Hauck2025}, two-level domain decomposition methods for spatial network models are developed, and an application of this method to the Timoshenko beam network is presented.
	
	In our opinion, viewing the system \eqref{eq:abst_timoshenko1}-\eqref{eq:abst_timoshenko2} as an abstract analogue of the nonlinear Timoshenko model has certain advantages. In this context, we can employ a unified approach for a class of related problems. In particular, the formulation \eqref{eq:abst_timoshenko1}-\eqref{eq:abst_timoshenko3} includes, within the framework of the Timoshenko model, the Dirichlet, Neumann, Robin, and mixed boundary value problems. Furthermore, by using the abstract setting, we can also address the spatially multidimensional case on a Lipschitz domain.
	
	The approach used in the following discussion also covers the case where the coupling coefficients $a_1$ and $a_2$ vanish. When $a_1 = a_2 = 0$, system \eqref{eq:abst_timoshenko1}-\eqref{eq:abst_timoshenko2} decouples into two independent equations. The first equation is an abstract counterpart of the classical Kirchhoff equation and includes both one-dimensional and multidimensional settings; see Lions \cite{Lions1978}. Moreover, our approach can be extended to the case where, in equation \eqref{eq:abst_timoshenko1}, the term ${\norm{A^{\nicefrac{1}{2}} u}}^2$ is replaced by ${\norm{u}}^2$. This formulation includes, as a particular case, the Carrier equation \cite{Carrier1945}.
	
	The second equation provides an abstract formulation of the linear model of wave propagation. Clearly, it covers both one-dimensional and multidimensional cases. In addition, this equation also includes dynamic plane and spatial problems arising in the theory of elasticity. In this setting, $A$ represents the Friedrichs extension of the symmetric positive definite elasticity operator associated with the displacement formulation of the elasticity system under classical boundary conditions; see Mikhlin \citepbf{Section~29}{Mikhlin1964}.
	
	\subsection{Outline of the Problem}
	
	We develop a time-stepping scheme for the nonlinear problem \eqref{eq:abst_timoshenko1}-\eqref{eq:abst_timoshenko3}. To achieve this objective, a uniform temporal grid is introduced, and a symmetric three-layer semi-discrete scheme is proposed, in which the nonlinear term is evaluated at a temporal midpoint. Such a treatment permits an approximate solution of the original nonlinear problem by solving a linear problem in parallel at each temporal layer.
	
	The investigation of the stability and convergence of the proposed scheme is based on the following fact: the sequences of vectors $A^{\nicefrac{s}{2}} \brac{u_k - u_{k - 1}} / \tau$ and $A^{\nicefrac{\brac{s + 1}}{2}} u_k$, with $s = 0, 1$, as well as $\brac{v_k - v_{k - 1}} / \tau$ and $L^{\nicefrac{1}{2}} v_k$, where $L = \delta A + \gamma C$, are uniformly bounded. Here, $u_k$ and $v_k$ denote approximate solutions, and $\tau > 0$ is the time step. These facts allow us to estimate the error of the approximate solution and to show that the order of convergence is $\bigO \brac{\tau^2}$ in the class of smooth solutions. Moreover, the approximation error of the first derivative, obtained by applying the central finite difference formula to the approximate solution, is also of order $\bigO \brac{\tau^2}$.
	
	We subsequently consider a nonlinear, spatially one-dimensional Timoshenko beam system \eqref{eq:timosh_spec_nonlinear}-\eqref{eq:timosh_spec_bound_conds} as a special case of the problem \eqref{eq:abst_timoshenko1}-\eqref{eq:abst_timoshenko3}, and the results obtained within the abstract framework are extended to this setting. In this case, the proposed scheme yields second-order linear ordinary differential equations for the unknowns $u_k \brac{x}$ and $v_k \brac{x}$ at each temporal layer. Note that these differential equations are not coupled and can therefore be solved in parallel. To compute numerical solutions of these equations, we employ the Legendre–Galerkin spectral method. The chosen basis functions ensure that the resulting Galerkin linear system is sparse, which can be decoupled into two linear subsystems. More precisely, the coefficient matrix of the corresponding Galerkin linear system is a symmetric positive-definite tridiagonal matrix with a gap, in the sense that it has only nonzero entries on the main diagonal and the second sub- and superdiagonals, while the first sub- and superdiagonals vanish. By exploiting this structure, the associated linear system can be decomposed into two independent tridiagonal subsystems: one corresponding to the odd-indexed unknowns and the other to the even-indexed unknowns. This property is particularly important for numerical implementation.
	
	An important step in the numerical computation of the Timoshenko model using the proposed scheme is the estimation of the error of the Legendre–Galerkin spectral method. The approximation error of this method is estimated using both the $L^2$-norm and the uniform norm for the ordinary differential equations resulting from time discretization.
	
	The crowning stage of each algorithm's development is numerical experiments. It is well known that the construction of algorithms for the numerical computation of mathematical models describing various processes, together with their computer implementation, enables the use of numerical experiments that replace many expensive real-world experiments spanning various fields of natural science. To assess the practical value of an algorithm, it is important to conduct numerical experiments on model problems that fully demonstrate the role of each component of the algorithm.
	
	The paper discusses four benchmark problems whose solutions exhibit oscillatory behavior. Numerical results obtained using the proposed algorithm indicate that the combined algorithm is stable and achieves high practical accuracy for these problems. The proposed algorithm is implemented in Python, and the corresponding source code is archived in an open-access Zenodo repository \cite{RogVashCode2026}.
	
	\section{Symmetric Three-Layer Semi-Discrete Scheme}\label{sec:semidiscrete}
	We aim to find a solution to the problem \eqref{eq:abst_timoshenko1}-\eqref{eq:abst_timoshenko3} using the following semi-discrete scheme:
	\begin{subequations}
		\begin{gather}
			\frac{\Delta^2 u_{k - 1}}{\tau^2} + \brac{\alpha + \beta {\norm{A^{\nicefrac{1}{2}} u_k}}^2} \frac{A u_{k + 1} + A u_{k - 1}}{2} = f_{1,k} - a_1 B v_k\,,\label{eq:semi_discrete_timosh_1} \\
			\frac{\Delta^2 v_{k - 1}}{\tau^2} + \frac{L v_{k + 1} + L v_{k - 1}}{2} = f_{2,k} - a_2 B u_k\,,\label{eq:semi_discrete_timosh_2}
		\end{gather}
	\end{subequations}
	where $k = 1,2,\ldots,n - 1$ and $\tau = T/n$ (with $n > 1$), $\Delta u_{k - 1} = u_k - u_{k - 1}$ and $\Delta^2 u_{k - 1} = \Delta\brac{\Delta u_{k - 1}}$. Additionally, let $f_{1,k} = f_1\brac{t_k}$ and $f_{2,k} = f_2\brac{t_k}$, where $t_k = k\tau$. The initial conditions are given by $u_0 = \varphi_0$ and $v_0 = \psi_0$. The operator $L$ is defined as $L = \gamma A + \delta C$.
	
	Let $u\brac{t_k}$ and $v\brac{t_k}$ be the values of the solutions to problem \eqref{eq:abst_timoshenko1}-\eqref{eq:abst_timoshenko3} at the discrete time points $t = t_k$. Their corresponding numerical approximations at these points are denoted by $u_k$ and $v_k$, respectively; thus, $u\brac{t_k} \approx u_k$ and $v\brac{t_k} \approx v_k$.
	
	To perform computations using schemes \eqref{eq:semi_discrete_timosh_1} and \eqref{eq:semi_discrete_timosh_2}, it is essential to ascertain the initial vectors $u_0$, $v_0$, $u_1$, and $v_1$. The vectors $u_0$ and $v_0$ are prescribed by $u_0 = \varphi_0$ and $v_0 = \psi_0$, respectively. The vectors $u_1$ and $v_1$ need to be approximated. It is well-established that to approximate $u_1$ and $v_1$, one should expand the exact solutions $u\brac{t}$ and $v\brac{t}$ in a Taylor series around $t = 0$, retaining at least the first two terms. This yields: $u_1 = \varphi_0 + \tau \varphi_1$ and $v_1 = \psi_0 + \tau \psi_1$. To achieve second-order accuracy, it is necessary to include the first three terms in the Taylor expansions of $u\brac{\tau}$ and $v\brac{\tau}$, which involves the second-order derivatives. These second-order derivatives of $u\brac{t}$ and $v\brac{t}$ at $t = 0$ can be determined from equations \eqref{eq:abst_timoshenko1} and \eqref{eq:abst_timoshenko2}, considering the initial conditions specified in \eqref{eq:abst_timoshenko3}. Thus, we obtain:
	\begin{gather}
		u_1 = \varphi_0 + \tau \varphi_1 + \frac{\tau^2}{2} \varphi_2\,,\quad \varphi_2 = f_{1,0} - a_1 B \psi_0 - \brac{\alpha + \beta {\norm{A^{\nicefrac{1}{2}} \varphi_0}}^2} A \varphi_0\,,\label{eq:initial_vect01} \\
		v_1 = \psi_0 + \tau \psi_1 + \frac{\tau^2}{2} \psi_2\,,\quad \psi_2 = f_{2,0} - a_2 B \varphi_0 - L \psi_0\,.\label{eq:initial_vect02}
	\end{gather}
	By substituting the values of the starting vectors $u_0$, $v_0$, $u_1$, and $v_1$ into equations \eqref{eq:semi_discrete_timosh_1} and \eqref{eq:semi_discrete_timosh_2}, we derive the subsequent linear system of equations that defines the vectors $u_2$ and $v_2$:
	\begin{subequations}
		\begin{align}
			\brac{I + \frac{\tau^2}{2} q_1 A} u_2 &= g_1\,,\label{eq:oper_eq_u2} \\
			\brac{I + \frac{\tau^2}{2} L} v_2 &= g_2\,,\label{eq:oper_eq_v2}
		\end{align}
	\end{subequations}
	where the scalar $q_1 = \alpha + \beta {\norm{A^{\nicefrac{1}{2}} u_1}}^2$ and the right-hand sides $g_1$ and $g_2$ are given.
	
	Given that $A$ is a self-adjoint, positively-definite operator and $q_1 > 0$, it follows that the operator $I + 0.5 \tau^2 q_1 A$ is also self-adjoint and positively definite. Consequently, the operator $I + 0.5 \tau^2 q_1 A$ is continuously invertible, ensuring that equation \eqref{eq:oper_eq_u2} has a unique solution for each $g_1$ from $\H$, which depends continuously on the right-hand side. The same reasoning applies to equation \eqref{eq:oper_eq_v2}. Moreover, we consider the following fact: the sum of a self-adjoint operator and a symmetric, bounded operator is also self-adjoint ({\cf} Kato \citepbf{Chapter~IV}{Kato1995}). Therefore, $u_2 = {\brac{I + 0.5 \tau^2 q_1 A}}^{-1} g_1$ and $v_2 = {\brac{I + 0.5 \tau^2 L}}^{-1} g_2$, where ${\brac{I + 0.5 \tau^2 q_1 A}}^{-1}$ and ${\brac{I + 0.5 \tau^2 L}}^{-1}$ are bounded, self-adjoint operators defined over the entire Hilbert space $\H$. In a way analogous to the determination of $u_2$ and $v_2$, the vectors $u_k$ and $v_k$ for $k > 2$ are computed using $u_{k - 1}$, $v_{k - 1}$, $u_{k - 2}$, and $v_{k - 2}$. Consequently, the application of schemes \eqref{eq:semi_discrete_timosh_1} and \eqref{eq:semi_discrete_timosh_2} is reduced to solving linear problems at each temporal layer. The proposed algorithm enables the parallel computation of the vectors $u_k$ and $v_k$.
	
	\section{Preliminaries}\label{sec:prelim}
	In the following, we present two lemmas, stated without proof, which we require to establish the convergence of the scheme \eqref{eq:semi_discrete_timosh_1}-\eqref{eq:semi_discrete_timosh_2}.
	\begin{lemma}[For further details, refer to \textbf{Lemma 3.2} in \citep{RogavaTsiklauri2012}]\label{lemma:rogava-tsiklauri1}
		Consider the sequences of nonnegative numbers ${\left\{ {\alpha}_{k} \right\}}_{k = 0}^{n}$ and ${\left\{ {c}_{k} \right\}}_{k = 0}^{n}$, which satisfy the following inequality:
		\begin{equation*}
			{\alpha}_{k + 1} \leq {\alpha}_{k}\left( 1 + {\tau}{\alpha}_{k}^{s} \right) + {\tau}{c}_{k}\,,
		\end{equation*}
		where ${s} > 0$ and ${\tau} > 0$ hold.
		
		Consequently, the estimate is valid
		\begin{equation*}
			{\alpha}_{k} \leq \frac{\alpha}{{\left( 1 - {s}{\alpha}^{s}{t}_{k}{a}_{k} \right)}^{\nicefrac{1}{s}}}\,,\quad {t}_{k} = {k}{\tau} < \frac{1}{{s}{\alpha}^{s}{a}_{k}}\,,\quad {\alpha} = \max\left( 1,{\alpha}_{0} \right)\,,\quad {a}_{k} = 1 + \max\limits_{{0} \leq {i} \leq {k}}{\left( {c}_{i} \right)}\,.
		\end{equation*}
	\end{lemma}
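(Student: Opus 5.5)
Reading the hypothesis as $\alpha_{k+1}\le \alpha_k(1+\tau\alpha_k^s)+\tau c_k$, I would prove the estimate by comparison with the solution of the Bernoulli-type ODE $y'=a\,y^{1+s}$, since the recursion is an explicit Euler step for such an equation. Fix $k$ and set $a=a_k=1+\max_{0\le i\le k}c_i\ge 1$, which stays constant on the range $0\le i\le k$. For $i\le k-1$ we have $c_i\le a-1\le a\,\beta^{1+s}$ whenever $\beta\ge 1$. I would therefore introduce the auxiliary sequence $\beta_i=\max(1,\alpha_i)$. Then $\beta_0=\alpha$, and for $i<k$
\begin{equation*}
\alpha_{i+1}\le \beta_i+\tau\beta_i^{1+s}+\tau(a-1)\le \beta_i+\tau a\beta_i^{1+s},
\end{equation*}
where the second inequality uses $\beta_i\ge 1$. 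The right-hand side is at least $1$, so $\beta_{i+1}\le \beta_i(1+\tau a\beta_i^{s})$ as well. The problem thus reduces to the clean recursion $\beta_{i+1}\le\beta_i+\tau a\beta_i^{1+s}$ with $\beta_0=\alpha\ge1$.

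The key step is to show that $w_i:=\beta_i^{-s}$ decreases at most linearly, namely $w_{i+1}\ge w_i-s a\tau$. Summing over $i$ then gives $\beta_i^{-s}\ge \alpha^{-s}-s a t_i$, which is exactly the claimed bound $\alpha_k\le\beta_k\le \alpha(1-s\alpha^s t_k a_k)^{-1/s}$, valid as long as $t_k<1/(s\alpha^s a_k)$. To prove the one-step inequality, I would write
\begin{equation*}
\beta_{i+1}^{-s}\ge \beta_i^{-s}(1+\tau a\beta_i^s)^{-s}
\end{equation*}
and use Bernoulli's inequality $(1+x)^{-s}\ge 1-sx$ for $x\ge0$, $s>0$. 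This inequality follows from convexity of $x\mapsto(1+x)^{-s}$ together with its tangent line at $0$. It gives $\beta_{i+1}^{-s}\ge\beta_i^{-s}-s\tau a$. This single inequality is the heart of the argument, and it requires no smallness of $\tau$.

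The main point to watch is bookkeeping rather than a real obstacle. First, $a_k$ must be frozen at the final index $k$, which is legitimate because $a_i\le a_k$ for $i\le k$. Second, the condition $t_k<1/(s\alpha^s a_k)$ guarantees that the lower bound $\alpha^{-s}-s a t_i$ stays positive for all $i\le k$, so the powers may be inverted. Note that the recursion for $w_i$ holds unconditionally, because $\beta_i^{-s}>0$ always; positivity is needed only to convert back to an upper bound on $\beta_k$. If a sharper argument were desired, one could instead compare with the continuous ODE solution $y(t)=\alpha(1-s\alpha^s a t)^{-1/s}$ via convexity, but the Bernoulli telescoping route is shorter and I would use it.
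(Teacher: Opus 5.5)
Your proof is correct and complete. The paper gives no argument for this lemma to compare against: it is explicitly ``stated without proof'' and imported from Lemma~3.2 of \citep{RogavaTsiklauri2012}, so your argument stands on its own.

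The steps all check out.
\begin{enumerate*}[label=(\roman*)]
\item Replacing $\alpha_i$ by $\beta_i=\max(1,\alpha_i)$ in $\alpha_i(1+\tau\alpha_i^s)$ uses that $x\mapsto x(1+\tau x^s)$ is increasing on $[0,\infty)$. This is trivial, but you should state it.
\item Absorbing $\tau c_i\le\tau(a-1)$ amounts to $a-1\le(a-1)\beta_i^{1+s}$, which is exactly what $\beta_i\ge1$ gives. Your preliminary remark $c_i\le a\beta^{1+s}$ is not the inequality you actually use, but your displayed chain is right.
\item The tangent-line bound $(1+x)^{-s}\ge1-sx$ holds for all $s>0$ and $x\ge0$ by convexity.
\item The telescoped bound $\beta_k^{-s}\ge\alpha^{-s}-s\,a\,t_k$ holds unconditionally. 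The restriction $t_k<1/(s\alpha^s a_k)$ is needed only to invert it.
\end{enumerate*}

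Since only $c_0,\dots,c_{k-1}$ enter the recursion, your argument actually proves slightly more than claimed. It bounds $\max(1,\alpha_k)$, and it allows $a_{k-1}$ in place of $a_k$.

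Compare this with the ODE-comparison route you mention. There one proves $\alpha_i\le y(t_i)$ by induction, with $y(t)=\alpha(1-s\alpha^s a t)^{-1/s}$, using convexity of $y$ to get $y(t_{i+1})\ge y(t_i)+\tau a\,y(t_i)^{1+s}$. That route must keep every $t_i$ below the blow-up time throughout the induction. Your substitution $w_i=\beta_i^{-s}$ instead linearizes the Bernoulli equation $y'=ay^{1+s}$, so the one-step estimate carries no such constraint. Both routes give the identical bound, and yours is the shorter bookkeeping.
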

	The subsequent lemma is stated with an adjustment to the proposed scheme \eqref{eq:semi_discrete_timosh_2}. The lemma is formulated as follows.
	\begin{lemma}[See \textbf{Lemma 3.1} in \citep{RogavaTsiklauri2014} for further details]\label{lemma:rogava-tsiklauri2}
		Given that $L = \delta A + \gamma C$ is a self-adjoint, positive definite operator, the following {\apriori} estimate is valid for scheme \eqref{eq:semi_discrete_timosh_2} for all $s \geq 0$:
		\begin{equation}\label{eq:lemma_rogtsikl2}
			\begin{aligned}
				\norm{L^s v_{k + 1}} &\leq \sqrt{2} \brac{ \norm{L^s v_0} + \norm{L^{s - \nicefrac{1}{2}} \frac{\Delta v_0}{\tau}} } + \tau\norm{L^s\frac{\Delta v_0}{\tau}} \\
				&+ \tau\sum_{i = 1}^{k} \norm{L^{s - \nicefrac{1}{2}} g_{2,i}}\,,\quad L^0 = I\,,
			\end{aligned}
		\end{equation}
		in which $v_0$ and $v_1$ belong to $D\brac{L^s}$, and $g_{2,i} = f_{2,i} - a_2 B u_i$ belongs to $D\brac{L^{s - \nicefrac{1}{2}}}$.
	\end{lemma}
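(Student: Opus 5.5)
We prove \eqref{eq:lemma_rogtsikl2} by writing the linear scheme \eqref{eq:semi_discrete_timosh_2} as a one-step recursion for the pair $\brac{v_k,v_{k+1}}$, bounding the propagation operator, and applying a discrete Duhamel formula. Nonlinearity plays no role: we regard $g_{2,k} = f_{2,k} - a_2 B u_k$ as a given right-hand side. Rewrite the scheme as
\begin{equation*}
	\brac{I + \tfrac{\tau^2}{2} L}\brac{v_{k+1} + v_{k-1}} - 2 v_k = \tau^2 g_{2,k}\,.
\end{equation*}
Put $R = {\brac{I + \tfrac{\tau^2}{2} L}}^{-1}$, which is bounded, self-adjoint and commutes with $L$. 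Then
\begin{equation*}
	v_{k+1} = 2 R v_k - v_{k-1} + \tau^2 R g_{2,k}\,.
\end{equation*}
This is a Chebyshev-type recursion. Using the spectral decomposition of $L$, set $2R = 2\cos\brac{\tau \Lambda}$, where $\Lambda$ is defined by functional calculus. This is legitimate because $0 < 2R \leq 2I$. The solution is then
\begin{equation*}
	v_{k+1} = U_{k}\brac{R}\, v_1 - U_{k-1}\brac{R}\, v_0 + \tau^2 \sum_{i=1}^{k} U_{k-i}\brac{R} R\, g_{2,i}\,,
\end{equation*}
where $U_m\brac{R} = \sin\brac{\brac{m+1}\tau\Lambda}/\sin\brac{\tau\Lambda}$ are Chebyshev polynomials of the second kind in $R$.

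Next we apply $L^s$, which commutes with everything, and estimate each piece on the spectrum. For an eigenvalue $\lambda$ of $L$, let $r = 1/\brac{1+\tau^2\lambda/2}$ and $\cos\theta = r$. The key scalar estimates are:
\begin{itemize}
	\item $\abs{U_m\brac{r} - U_{m-1}\brac{r}} \leq \sqrt{2}$, since $U_m - U_{m-1} = \cos\brac{\brac{m+\tfrac12}\theta}/\cos\brac{\theta/2}$ and $\cos^2\brac{\theta/2} = \brac{1+r}/2 \geq 1/2$;
	\item $\tau \abs{U_m\brac{r}}\, r \leq C\, \lambda^{-1/2}$; more precisely $\tau\abs{U_m\brac{r}}\sqrt{r} \leq \lambda^{-1/2}$, via $\abs{\sin\brac{\brac{m+1}\theta}} \leq 1$ and $\sin^2\theta = 1 - r^2 \geq \tau^2\lambda r^2 \cdot$(const).
\end{itemize}

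To match the stated form, we split
\begin{equation*}
	U_k v_1 - U_{k-1} v_0 = \brac{U_k - U_{k-1}} v_0 + \tau U_k \frac{\Delta v_0}{\tau}\,.
\end{equation*}
The first term gives $\sqrt{2}\norm{L^s v_0}$. For the second, we write $\tau U_k = \tau U_k\sqrt{r} + \tau U_k\brac{1-\sqrt{r}}$. The first part is bounded by $\sqrt{2}\,\lambda^{-1/2}$, which yields $\sqrt2\norm{L^{s-\nicefrac12}\Delta v_0/\tau}$. The remainder is bounded by $\tau$ using a crude bound, which yields $\tau\norm{L^s \Delta v_0/\tau}$. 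Alternatively, we can reproduce the authors' energy argument: take the inner product of the scheme with $v_{k+1} - v_{k-1}$ to obtain the conserved quantity
\begin{equation*}
	\norm{\tfrac{\Delta v_k}{\tau}}^2 + \tfrac12\brac{\norm{L^{\nicefrac12}v_{k+1}}^2 + \norm{L^{\nicefrac12}v_k}^2}\,,
\end{equation*}
applied to $L^{s-\nicefrac12} v$. This gives a bound for $\norm{L^s v_{k+1}}$ with the $\sqrt2$ factor, and the source terms contribute $\tau\sum\norm{L^{s-\nicefrac12} g_{2,i}}$ by the triangle inequality for the energy norm. The extra $\tau\norm{L^s\Delta v_0/\tau}$ accounts for the mismatch between the averaged energy $\tfrac12\brac{\norm{L^{\nicefrac12}v_1}^2 + \norm{L^{\nicefrac12}v_0}^2}$ at the first step and $\norm{L^s v_0}$.

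I would follow the energy route, since it is cleaner. The source term is handled by Duhamel summation of the one-step energy inequality, where $\tau^2 g_{2,k}$ tested against $v_{k+1}-v_{k-1}$ is bounded by Cauchy–Schwarz in the $L^{s-\nicefrac12}$-weighted norm. The main obstacle is bookkeeping the exact constants. This includes extracting $\norm{L^s v_{k+1}}$ from the averaged energy without losing more than $\sqrt2$, and bounding the initial energy by $\norm{L^s v_0}$, $\norm{L^{s-\nicefrac12}\Delta v_0/\tau}$ and $\tau\norm{L^s\Delta v_0/\tau}$ via $v_1 = v_0 + \tau\brac{\Delta v_0/\tau}$.
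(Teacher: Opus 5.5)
The paper states this lemma without proof and cites \cite{RogavaTsiklauri2014}, so your argument has to stand on its own. Your first route (Chebyshev/Duhamel) does, and it gives exactly the constants in \eqref{eq:lemma_rogtsikl2}. Put $r=(1+\tau^2\lambda/2)^{-1}=\cos\theta$. Then $\sin^2\theta=1-r^2=\tfrac12\tau^2\lambda r(1+r)$ and $\abs{U_m(r)}\le 1/\sin\theta$, which give:
\begin{itemize}
\item $\abs{U_k-U_{k-1}}\le\sqrt2$;
\item $\lambda^{\nicefrac{1}{2}}\tau\abs{U_k}\sqrt r\le\sqrt{2/(1+r)}\le\sqrt2$;
\item $\abs{U_k}(1-\sqrt r)\le(1-\sqrt r)/\sqrt{1-r^2}\le 1$;
\item $\lambda^{\nicefrac{1}{2}}\tau^2 r\abs{U_{k-i}}\le\tau\sqrt{2r/(1+r)}\le\tau$.
\end{itemize}
Two details need fixing. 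First, your ``more precisely'' claim $\tau\abs{U_m(r)}\sqrt r\le\lambda^{-\nicefrac{1}{2}}$ is false: for $m=0$ and $\tau^2\lambda\to\infty$ the ratio tends to $\sqrt2$. You do then use the correct constant $\sqrt2$. Second, the bound $\tau\abs{U_k}(1-\sqrt r)\le\tau$ is not crude, since $\abs{U_k}\le k+1$ would only give $(k+1)\tau$. It needs $1-\sqrt r\le\sqrt{1-r^2}$, which is equivalent to $\sqrt r+r^{\nicefrac{3}{2}}\le 2$.

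The gap is in the energy route, which is the one you say you would follow. Set $w_k=L^{s-\nicefrac{1}{2}}v_k$ and $E_{k+1}=\norm{\Delta w_k/\tau}^2+\tfrac12\brac{\norm{L^{\nicefrac{1}{2}}w_{k+1}}^2+\norm{L^{\nicefrac{1}{2}}w_k}^2}$. Testing with $w_{k+1}-w_{k-1}$ gives $\sqrt{E_{k+1}}\le\sqrt{E_1}+\tau\sum_{i=1}^k\norm{L^{s-\nicefrac{1}{2}}g_{2,i}}$. The data terms come out right: with $a=\norm{L^sv_0}$ and $b=\tau\norm{L^s\Delta v_0/\tau}$, you have $\norm{L^sv_1}\le a+b$ and $\sqrt{(a+b)^2+a^2}\le\sqrt2\,a+b$. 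However, $\norm{L^sv_{k+1}}$ can only be extracted as $\norm{L^{\nicefrac{1}{2}}w_{k+1}}\le\sqrt{2E_{k+1}}$, and that factor $\sqrt2$ multiplies the source sum as well. You therefore obtain $\sqrt2\,\tau\sum_{i=1}^k\norm{L^{s-\nicefrac{1}{2}}g_{2,i}}$, not $\tau\sum_{i=1}^k\norm{L^{s-\nicefrac{1}{2}}g_{2,i}}$.

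This is not a bookkeeping slip. For a single mode, the optimal constant in $\lambda\abs{w_{k+1}}^2\le cE_{k+1}$ is $c=2/(1+r)$, which tends to $2$ as $\tau^2\lambda\to\infty$. The reason is that $\norm{L^{\nicefrac{1}{2}}\Delta w_k}$ is not controlled by $\norm{\Delta w_k/\tau}$.

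The weaker constant would not matter for the later use of the lemma, since it is absorbed in Remark~\ref{prop:remark5} and Lemma~\ref{prop:lemma2}. It is, however, not the inequality as stated. To get coefficient $1$, keep your Chebyshev representation at least for the source term. Each Duhamel piece $\tau^2U_{k-i}(R)Rg_{2,i}$ is bounded in the $L^s$-norm by $\tau\norm{L^{s-\nicefrac{1}{2}}g_{2,i}}$, by the last scalar estimate above.
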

	For our purposes, in the {\apriori} estimate of \eqref{eq:lemma_rogtsikl2}, we need to replace the operator $L$ with the operator $A$. This requires dual assessments of the operator $L$ in terms of the operator $A$. We shall state these estimates in the form of remarks.
	
	\begin{remark}\label{prop:remark1}
		For the operator $L = \delta A + \gamma C$, the following lower and upper bounds hold:
		\begin{equation}\label{eq:remark1_double_ineqt_main}
			\hat{\gamma} \norm{A \varphi} \leq \norm{L \varphi} \leq \nu_{0} \norm{A\varphi}\,,\quad \varphi \in D\brac{A}\,,\quad \hat{\gamma} > 0\,,\quad \nu_{0} > 0\,.
		\end{equation}
	\end{remark}
	\begin{proof}
		Let us now demonstrate the left-hand inequality of \eqref{eq:remark1_double_ineqt_main}. By applying a straightforward transformation, it is obtained
		\begin{equation}\label{eq:remark1_lower_bound}
			{\norm{L\varphi}}^{2} = \gamma^2 {\norm{A\varphi}}^2 + 2 \gamma \delta \brac{A\varphi,C\varphi} + \delta^2 {\norm{C\varphi}}^2 \geq \gamma^2 {\norm{A\varphi}}^2 - 2 \gamma \delta \norm{A\varphi}\norm{C\varphi} + \delta^2 {\norm{C\varphi}}^2\,.
		\end{equation}
		In the derivation, we have applied the Cauchy-Schwarz inequality.
		
		Applying Young's inequality with $\varepsilon$ (valid for every $\varepsilon > 0$) to the term $2\norm{A\varphi}\norm{C\varphi}$ yields the following result for assessment \eqref{eq:remark1_lower_bound}
		\begin{equation}\label{eq:remark1_Lu_ineqt}
			{\norm{L\varphi}}^{2} + \frac{\delta}{\varepsilon} \brac{\gamma - \varepsilon \delta} {\norm{C\varphi}}^2 \geq \gamma \brac{\gamma - \varepsilon \delta} {\norm{A\varphi}}^2\,,\quad 0 < \varepsilon < \frac{\gamma}{\delta}\,.
		\end{equation}
		Given the positive definiteness of the operator $A$ and using the Cauchy-Schwarz inequality, we obtain the following result:
		\begin{equation*}
			\norm{L\varphi} \norm{\varphi} \geq \brac{L\varphi,\varphi} \geq \gamma \brac{A\varphi,\varphi} \geq \gamma \nu {\norm{\varphi}}^2\,,
		\end{equation*}
		which leads to the conclusion that
		\begin{equation*}
			\norm{\varphi} \leq \frac{1}{\gamma \nu} \norm{L\varphi}\,.
		\end{equation*}
		Furthermore, under the assumption that $\norm{C\varphi} \leq c \norm{\varphi}$, with $c = \norm{C}$, we obtain the resulting inequality from \eqref{eq:remark1_Lu_ineqt}
		\begin{equation*}
			\norm{L\varphi} \geq \hat{\gamma} \norm{A\varphi}\,,\quad \hat{\gamma} = \sqrt{\frac{\gamma \brac{\gamma - \varepsilon \delta}}{1 + \frac{c^2 \delta}{\varepsilon \gamma^2 \nu^2}\brac{\gamma - \varepsilon \delta}}}\,.
		\end{equation*}
		
		Proceeding to demonstrate the right-hand inequality of \eqref{eq:remark1_double_ineqt_main}, we account for the fact that $\norm{C\varphi} \leq c \norm{\varphi}$ and thus arrive at the following conclusion:
		\begin{equation}\label{eq:remark1_norm_L}
			\norm{L\varphi} \leq \gamma \norm{A\varphi} + c \delta \norm{\varphi}\,.
		\end{equation}
		By taking an additional step, which involves applying the Cauchy-Schwarz inequality and employing the positive definiteness of the operator $A$, we obtain:
		\begin{equation*}
			\norm{A\varphi}\norm{\varphi} \geq \brac{A\varphi,\varphi} \geq \nu {\norm{\varphi}}^2\,.
		\end{equation*}
		Based on the aforementioned inequality in \eqref{eq:remark1_norm_L}, the desired result follows
		\begin{equation*}
			\norm{L\varphi} \leq \nu_0 \norm{A\varphi}\,,\quad \nu_0 = \gamma + \frac{c\delta}{\nu}\,.
		\end{equation*}
	\end{proof}
	
	\begin{remark}\label{prop:remark2}
		The following bounds are satisfied
		\begin{equation}\label{eq:remark2_ineqt_in_rmk}
			\sqrt{\gamma} \norm{A^{\nicefrac{1}{2}} \varphi} \leq \norm{L^{\nicefrac{1}{2}}\varphi} \leq \nu_0 \norm{A^{\nicefrac{1}{2}}\varphi}\,,\quad \varphi \in D\brac{A^{\nicefrac{1}{2}}}\,, \quad \nu_0 > 0\,.
		\end{equation}
	\end{remark}
	\begin{proof}
		Due to the positive definiteness of the operator $A$, it follows directly that
		\begin{equation}\label{eq:remark2_result_post_def}
			{\norm{A^{\nicefrac{1}{2}} \varphi}}^2 \geq \nu {\norm{\varphi}}^2\,,\quad \varphi \in D\brac{A}\,.
		\end{equation}
		On the other hand, considering inequality \eqref{eq:remark2_result_post_def} along with the fact that $\norm{C\varphi} \leq c \norm{\varphi}$, we obtain
		\begin{align*}
			\brac{L\varphi,\varphi} &= \gamma \brac{A\varphi,\varphi} + \delta\brac{C\varphi,\varphi} \leq \gamma{\norm{A^{\nicefrac{1}{2}} \varphi}}^2 + c\delta{\norm{\varphi}}^2 \\
			&\leq \nu_0 {\norm{A^{\nicefrac{1}{2}} \varphi}}^2\,,\quad \varphi \in D\brac{A}\,,\quad \nu_0 = \gamma + \frac{c\delta}{\nu}\,.
		\end{align*}
		Consequently, we derive that
		\begin{equation}\label{eq:remark2_final_res}
			\norm{L^{\nicefrac{1}{2}}\varphi} \leq \nu_0 \norm{A^{\nicefrac{1}{2}} \varphi}\,,\quad \varphi \in D\brac{A}\,.
		\end{equation}
		Given that the operator $A^{\nicefrac{1}{2}}$ maps $D\brac{A}$ onto $D\brac{A^{\nicefrac{1}{2}}}$ and that the range of $A^{\nicefrac{1}{2}}$, $R\brac{A^{\nicefrac{1}{2}}} = \H$. Under these conditions, it follows that inequality \eqref{eq:remark2_final_res} can be extended to the entire domain $D\brac{A^{\nicefrac{1}{2}}}$. Consequently, the right-hand inequality of \eqref{eq:remark2_ineqt_in_rmk} is satisfied.
		
		The left-hand inequality of \eqref{eq:remark2_ineqt_in_rmk} is derived through analogous reasoning.
	\end{proof}
	
	\begin{remark}\label{prop:remark3}
		Based on Remarks \ref{prop:remark1} and \ref{prop:remark2}, the following {\apriori} estimates can be derived from \eqref{eq:lemma_rogtsikl2}:
		\begin{equation*}
			\begin{aligned}
				\norm{A^{\nicefrac{1}{2}} v_{k + 1}} &\leq \frac{2}{\sqrt{2 \gamma}} \brac{ \nu_0 \norm{A^{\nicefrac{1}{2}} v_0} + \norm{\frac{\Delta v_0}{\tau}} } + \frac{\nu_0\tau}{\sqrt{\gamma}}\norm{A^{\nicefrac{1}{2}} \frac{\Delta v_0}{\tau}} + \frac{\tau}{\sqrt{\gamma}}\sum_{i = 1}^{k} \norm{g_{2,i}}\,, \\
				\norm{A v_{k + 1}} &\leq \frac{\nu_0 \sqrt{2}}{\hat{\gamma}} \brac{ \norm{A v_0} + \norm{A^{\nicefrac{1}{2}} \frac{\Delta v_0}{\tau}} } + \frac{\nu_0 \tau}{\hat{\gamma}} \norm{A\frac{\Delta v_0}{\tau}} + \frac{\nu_0 \tau}{\hat{\gamma}} \sum_{i = 1}^{k} \norm{A^{\nicefrac{1}{2}} g_{2,i}}\,.
			\end{aligned}
		\end{equation*}
	\end{remark}
	
	\begin{remark}\label{prop:remark6}
		From condition \eqref{eq:B_cond}, the following relation is deduced:
		\begin{equation}\label{eq:remark6_final_res}
			\norm{B \varphi} \leq \cstBoper \norm{A^{\nicefrac{1}{2}} \varphi}\,,\quad \forall \varphi \in D\brac{A^{\nicefrac{1}{2}}} \subset D\brac{B}\,.
		\end{equation}
	\end{remark}
	\begin{proof}
		It is evident that from \eqref{eq:B_cond}, the following implication arises:
		\begin{equation}\label{eq:remark6_relat_norm_B_norm_halfA}
			\norm{B \varphi} \leq \cstBoper \norm{A^{\nicefrac{1}{2}} \varphi}\,,\quad \forall \varphi \in D\brac{A} \subset D\brac{B}\,.
		\end{equation}
		It is known that $D\brac{A}$ is a core of $A^{\nicefrac{1}{2}}$ (see, Kato \citepbf{Lemma 3.38}{Kato1995}). This implies that for every $\varphi \in D\brac{A^{\nicefrac{1}{2}}}$, there exists a sequence $\varphi_n \in D\brac{A}$ such that $\varphi_n \to \varphi$ and $A^{\nicefrac{1}{2}} \varphi_n \to A^{\nicefrac{1}{2}} \varphi$. Consequently, by \eqref{eq:remark6_relat_norm_B_norm_halfA}, it follows that $B \varphi_n$ forms a Cauchy sequence. Given that $\H$ is a complete space, it is evident that this sequence converges. Furthermore, since the operator $B$ is closed, we conclude that $\varphi \in D\brac{B}$ and $B \varphi_n \to B \varphi$. As a result of this reasoning and inequality \eqref{eq:remark6_relat_norm_B_norm_halfA}, the relation \eqref{eq:remark6_final_res} is established.
	\end{proof}
	
	In the continuous problem, the coupling of equations \eqref{eq:abst_timoshenko1} and \eqref{eq:abst_timoshenko2} gives rise to the terms $a_1 B v\brac{t}$ and $a_2 B u\brac{t}$. Similarly, in the discrete problem, the terms $a_1 B v_k$ and $a_2 B u_k$ are responsible for the coupling of schemes \eqref{eq:semi_discrete_timosh_1} and \eqref{eq:semi_discrete_timosh_2}. Hence, to naturally connect the {\apriori} estimates of schemes \eqref{eq:semi_discrete_timosh_1} and \eqref{eq:semi_discrete_timosh_2}, it is necessary to fulfil a certain condition for the vector $B \varphi$. The following remark addresses this matter.
	\begin{remark}\label{prop:remark4}
		Let $D_1$ be a subset of $D\brac{A}$ that is dense in the Hilbert space $\H$. Suppose that the operator $B$ maps $D_1$ into $D\brac{A}$, {\ie}, $B:D_1 \to D\brac{A}$. Furthermore, assume that the following inequality is satisfied
		\begin{equation}\label{eq:remark4_inner_prod_ABphi}
			\brac{AB\varphi,B\varphi} \leq \cstrmkthree^2 {\norm{A\varphi}}^2\,,\quad \forall\varphi \in D_1\,,\quad \cstrmkthree > 0\,.
		\end{equation}
		If $R_1 = A D_1$ is dense in $\H$, then the following inequality holds
		\begin{equation}\label{eq:remark4_norm_halfA_B}
			\norm{A^{\nicefrac{1}{2}} B\varphi} \leq \cstrmkthree \norm{A\varphi}\,,\quad \forall \varphi \in D\brac{A}\,.
		\end{equation}
		\begin{proof}
			It is evident that, from equation \eqref{eq:remark4_inner_prod_ABphi}, it follows that
			\begin{equation}\label{eq:remark4_norm_ABphi_D1}
				\norm{A^{\nicefrac{1}{2}} B\varphi} \leq \cstrmkthree \norm{A\varphi}\,,\quad \forall \varphi \in D_1\,.
			\end{equation}
			By introducing the notation $A\varphi = w$, inequality \eqref{eq:remark4_norm_ABphi_D1} can be expressed in the following form
			\begin{equation}\label{eq:remark4_norm_ABphi_D1_w}
				\norm{A^{\nicefrac{1}{2}} B A^{-1} w} \leq \cstrmkthree \norm{w}\,,\quad \forall w \in R_1 = AD_1\,.
			\end{equation}
			Given that $R_1$ is dense in $\H$, therefore for arbitrary $w \in \H$, there exists a sequence $w_n \in R_1$ such that $w_n \to w$. According to \eqref{eq:remark4_norm_ABphi_D1_w}, it then follows that for the sequence $w_n$, we have
			\begin{equation}\label{eq:remark4_norm_ABphi_D1_wn}
				\norm{A^{\nicefrac{1}{2}} B A^{-1} w_n} \leq \cstrmkthree \norm{w_n}\,.
			\end{equation}
			Hence, It follows that $A^{\nicefrac{1}{2}} B A^{-1} w_n$ forms a Cauchy sequence. Consequently, $B A^{-1} w_n$ is also a Cauchy sequence, given that $A^{\nicefrac{1}{2}}$ is bounded below. Furthermore, since $A^{-1} w_n$ is a Cauchy sequence (due to $A^{-1}$ being bounded) and $B$ is a closed operator, we deduce that $B A^{-1} w_n \to B A^{-1} w$. By taking the limit in inequality \eqref{eq:remark4_norm_ABphi_D1_wn} and considering that $A^{\nicefrac{1}{2}}$ is a closed operator, we obtain
			\begin{equation*}
				\norm{A^{\nicefrac{1}{2}} B A^{-1} w} \leq \cstrmkthree \norm{w}\,,\quad \forall w \in \H\,,
			\end{equation*}
			or which is the same
			\begin{equation*}
				\norm{A^{\nicefrac{1}{2}} B\varphi} \leq \cstrmkthree \norm{A\varphi}\,,\quad \forall \varphi \in D\brac{A}\,.
			\end{equation*}
		\end{proof}
	\end{remark}
	
	\begin{remark}\label{prop:remark5}
		The following inequality is satisfied
		\begin{equation*}
			\norm{A v_{k + 1}} \leq \hat{M}_k + \cstrmkfora \tau \sum_{i = 1}^{k} \norm{A u_i}\,,
		\end{equation*}
		where
		\begin{gather*}
			\hat{M}_k = \cstrmkforb \brac{ \sqrt{2} \brac{ \norm{A v_0} + \norm{A^{\nicefrac{1}{2}} \frac{\Delta v_0}{\tau}} } + \tau\norm{A\frac{\Delta v_0}{\tau}} + \tau\sum_{i = 1}^{k} \norm{A^{\nicefrac{1}{2}}f_{2,i}} }\,, \\
			\cstrmkfora = \abs{a_2} \cstrmkthree \cstrmkforb\,,\quad \cstrmkforb = \frac{\nu_0}{\hat{\gamma}}\,.
		\end{gather*}
	\end{remark}
	From the second inequality in \remref{prop:remark3}, it follows immediately that
	\begin{align*}
		\norm{A v_{k + 1}} &\leq \frac{\nu_0}{\hat{\gamma}}\left[ \sqrt{2} \brac{ \norm{A v_0} + \norm{A^{\nicefrac{1}{2}} \frac{\Delta v_0}{\tau}} } + \tau\norm{A\frac{\Delta v_0}{\tau}}\right. \\
		&+ \left.\tau\sum_{i = 1}^{k} \norm{A^{\nicefrac{1}{2}}f_{2,i}} + \abs{a_2}\tau\sum_{i = 1}^{k} \norm{A^{\nicefrac{1}{2}}B u_i} \right]\,.
	\end{align*}
	By considering inequality \eqref{eq:remark4_norm_halfA_B} from \remref{prop:remark4}, we obtain the desired estimate.
	
	\section{Uniform Boundedness of the Solution to a Discrete Problem and Its Corresponding Difference Analogues of the Derivative}\label{sec:lemma1}
	
	It should be noted that, throughout this text and in all sections, the letters $c$ and $M$, indexed with lower subscripts, denote positive constants.
	
	In this \hyperref[sec:lemma1]{section}, we demonstrate that the vectors $\Delta u_{k - 1} / \tau$, $\Delta v_{k - 1} / \tau$, $A^{\nicefrac{1}{2}} u_k$, and $L^{\nicefrac{1}{2}} v_k$ are uniformly bounded, which is essential for proving the convergence of the approximate solution. Additionally, establishing convergence requires us to prove the uniform boundedness of the vectors $A u_k$ and $A^{\nicefrac{1}{2}} \Delta u_{k - 1} / {\tau}$. It is important to note that this issue is nontrivial, as the energy method does not yield a recurrence inequality that would allow the application of the telescoping series cancellation technique. The \hyperref[sec:lemma2]{subsequent section} addresses this by demonstrating that the vectors $A u_k$ and $A^{\nicefrac{1}{2}} \Delta u_{k - 1} / {\tau}$ are locally uniformly bounded.
	\begin{lemma}\label{prop:lemma1}
		Consider the sequences of vectors $\Delta u_{k - 1} / \tau$, $\Delta v_{k - 1} / \tau$, $A^{\nicefrac{1}{2}} u_k$, and $L^{\nicefrac{1}{2}} v_k$ for $k = 1,2,\ldots,n$. These sequences are uniformly bounded, meaning that there exist constants $M_j$ for $j = 1,2,3,4$, independent of $n$, such that the following inequalities are satisfied:
		\begin{equation*}
			\norm{\frac{\Delta u_{k - 1}}{\tau}} \leq \cstmone\,,\quad \norm{\frac{\Delta v_{k - 1}}{\tau}} \leq \cstmtwo\,,\quad \norm{A^{\nicefrac{1}{2}} u_k} \leq \cstmthree\,,\quad \norm{L^{\nicefrac{1}{2}} v_k} \leq \cstmfour\,,\quad k = 1,2,\ldots,n\,.
		\end{equation*}
	\end{lemma}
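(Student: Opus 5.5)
The plan is a discrete energy method. I will test each scheme with a symmetric difference of the unknowns and sum over the time steps so that the sums telescope. I take the inner product of \eqref{eq:semi_discrete_timosh_1} with $u_{k+1}-u_{k-1} = \Delta u_k + \Delta u_{k-1}$, and of \eqref{eq:semi_discrete_timosh_2} with $v_{k+1}-v_{k-1}$.

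\textbf{Linear part.} For the inertial term,
\begin{equation*}
\brac{\Delta^2 u_{k-1}, \Delta u_k + \Delta u_{k-1}} = {\norm{\Delta u_k}}^2 - {\norm{\Delta u_{k-1}}}^2 .
\end{equation*}
The self-adjointness of $A$ and $L$ gives
\begin{equation*}
\brac{A(u_{k+1}+u_{k-1}), u_{k+1}-u_{k-1}} = {\norm{A^{\nicefrac12}u_{k+1}}}^2 - {\norm{A^{\nicefrac12}u_{k-1}}}^2 ,
\end{equation*}
and the same identity holds with $L$ in place of $A$. For the $v$-equation this already gives a telescoping discrete energy
\begin{equation*}
E^v_k = {\norm{\Delta v_k/\tau}}^2 + \tfrac12\brac{{\norm{L^{\nicefrac12}v_{k+1}}}^2+{\norm{L^{\nicefrac12}v_k}}^2} .
\end{equation*}
The analogous quadratic part for $u$, with the factor $\alpha$, is $\alpha\bigl({\norm{A^{\nicefrac12}u_{k+1}}}^2+{\norm{A^{\nicefrac12}u_{k}}}^2\bigr)$.

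\textbf{Nonlinear term (main obstacle).} The coefficient $\beta{\norm{A^{\nicefrac12}u_k}}^2$ is evaluated at the midpoint. The product is
\begin{equation*}
\beta\, {\norm{A^{\nicefrac12}u_k}}^2 \brac{ {\norm{A^{\nicefrac12}u_{k+1}}}^2 - {\norm{A^{\nicefrac12}u_{k-1}}}^2 } .
\end{equation*}
Writing $p_k={\norm{A^{\nicefrac12}u_k}}^2$, this equals $\beta(p_kp_{k+1} - p_{k-1}p_k)$, which is exactly a telescoping difference. This is the key point: the midpoint evaluation makes the Kirchhoff term conservative. The discrete energy for $u$ is therefore
\begin{equation*}
E^u_k = {\norm{\Delta u_k/\tau}}^2 + \tfrac{\alpha}{2}(p_{k+1}+p_k) + \tfrac{\beta}{2}p_kp_{k+1} ,
\end{equation*}
after dividing by $\tau^2$ and adjusting the factor $1/2$. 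Since $p_kp_{k+1}\ge 0$, this energy still controls ${\norm{\Delta u_k/\tau}}^2$ and $p_{k+1}$, with coefficient $\alpha/2$.

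\textbf{Coupling and forcing.} The remaining right-hand side terms are
\begin{equation*}
\tau\brac{f_{1,k}-a_1Bv_k, (u_{k+1}-u_{k-1})/\tau}
\end{equation*}
and its counterpart for $v$. I bound $\norm{Bv_k}\le \cstBoper \norm{A^{\nicefrac12}v_k}$ by \remref{prop:remark6}, and then $\norm{A^{\nicefrac12}v_k}\le \gamma^{-1/2}\norm{L^{\nicefrac12}v_k}$ by \remref{prop:remark2}. Similarly, $\norm{Bu_k}\le \cstBoper p_k^{1/2}$. Applying Cauchy–Schwarz and Young's inequality, the sum $E_k=E^u_k+E^v_k$ satisfies
\begin{equation*}
E_k \le E_{k-1}(1+c\tau) + c\tau\brac{\norm{f_{1,k}}^2+\norm{f_{2,k}}^2} .
\end{equation*}
One care point is that the terms $\norm{\Delta u_{k-1}/\tau}$ and $p_{k-1}$ appearing on the right must be controlled by $E_{k-1}$ or $E_{k-2}$. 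For small $\tau$ this is handled with a shifted index, or by absorbing $c\tau E_k$ into the left side.

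\textbf{Conclusion.} The recursion is linear in $E$, so the discrete Gronwall inequality (or \lemref{lemma:rogava-tsiklauri1}, taken with any $s$) gives $E_k\le e^{cT}\bigl(E_0+c\max_i\bigl(\norm{f_{1,i}}^2+\norm{f_{2,i}}^2\bigr)\bigr)$, uniformly in $n$. To bound $E_0$, I use the starting values \eqref{eq:initial_vect01}–\eqref{eq:initial_vect02}. These show that $\Delta u_0/\tau$, $\Delta v_0/\tau$, $A^{\nicefrac12}u_1$ and $L^{\nicefrac12}v_1$ are bounded independently of $\tau$, assuming the data regularity $\varphi_0,\psi_0\in D(A)$ with $\varphi_2,\psi_2\in D(A^{\nicefrac12})$, $\varphi_1,\psi_1\in D(A^{\nicefrac12})$, and $\tau\le T$. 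The bounds $M_1,\dots,M_4$ then follow by reading off the components of $E_k$, including the case $k=1$ directly from the initial data.

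The step I expect to require the most care is organizing the index shifts so that the coupling terms close into a Gronwall-type inequality on the combined energy.
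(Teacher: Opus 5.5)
Your proposal is correct and is essentially the paper's proof: your $E^u_k$ and $E^v_k$ coincide exactly with the paper's $\delta_{1,k+1}^2$ and $\delta_{2,k+1}^2$, and the key observation is the same, namely that the midpoint Kirchhoff term $\tfrac{\beta}{2}p_kp_{k+1}$ telescopes exactly. The only difference is technical: the paper bounds the square root of the energy by the maximum argument of \citep{RogavaTsiklauri2014}, which gives $\delta_{1,k+1}\le\delta_{1,1}+2\tau\sum_{i=1}^{k}\norm{f_{1,i}-a_1Bv_i}$ with no restriction on $\tau$, and then applies a linear Gr\"onwall inequality to $\delta_{1,k}+\delta_{2,k}$; your Young-plus-absorption step instead needs $c\tau<1$, which is harmless, but drop the parenthetical appeal to \lemref{lemma:rogava-tsiklauri1}, since it gives only a local-in-time bound.
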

	\begin{proof}
		By evaluating the inner product of both sides of equation \eqref{eq:semi_discrete_timosh_1} with $u_{k + 1} - u_{k - 1} = \Delta u_k + \Delta u_{k - 1}$ and considering the properties of the operator $A$, which is self-adjoint and positive-definite, we arrive at
		\begin{align}\label{eq:lemma1_main_equality}
			{\norm{\frac{\Delta u_k}{\tau}}}^{2} + \frac{1}{2} \brac{\alpha + \beta {\norm{A^{\nicefrac{1}{2}} u_k}}^{2}} {\norm{A^{\nicefrac{1}{2}} u_{k + 1}}}^{2} &= {\norm{\frac{\Delta u_{k - 1}}{\tau}}}^{2} + \frac{1}{2} \brac{\alpha + \beta {\norm{A^{\nicefrac{1}{2}} u_k}}^{2}} {\norm{A^{\nicefrac{1}{2}} u_{k - 1}}}^{2}\nonumber \\
			&+ \brac{f_{1,k} - a_1 B v_k,\Delta u_k} + \brac{f_{1,k} - a_1 B v_k,\Delta u_{k - 1}}\,,
		\end{align}
		Let us denote
		\begin{equation*}
			\alpha_{1,k} = {\norm{\frac{\Delta u_{k - 1}}{\tau}}}^{2}\,,\quad \gamma_{1,k} = {\norm{A^{\nicefrac{1}{2}} u_k}}^{2}\,.
		\end{equation*}
		Employing these notations, equality \eqref{eq:lemma1_main_equality} can be expressed as follows:
		\begin{align*}
			\alpha_{1,k + 1} + \frac{1}{2} \brac{\alpha + \beta \gamma_{1,k}} \gamma_{1,k + 1} &= \alpha_{1,k} + \frac{1}{2} \brac{\alpha + \beta \gamma_{1,k}} \gamma_{1,k - 1} \\
			&+ \brac{f_{1,k} - a_1 B v_k,\Delta u_k} + \brac{f_{1,k} - a_1 B v_k,\Delta u_{k - 1}}\,.
		\end{align*}
		By applying the Cauchy-Schwarz inequality to the right-hand side of the given equality, one can conclude that
		\begin{align*}
			\alpha_{1,k + 1} + \frac{1}{2} \brac{\alpha + \beta \gamma_{1,k}} \gamma_{1,k + 1} &\leq \alpha_{1,k} + \frac{1}{2} \brac{\alpha + \beta \gamma_{1,k}} \gamma_{1,k - 1} \\
			&+ \tau \brac{\sqrt{\alpha_{1,k}} + \sqrt{\alpha_{1,k + 1}}} \norm{f_{1,k} - a_1 B v_k}\,.
		\end{align*}
		Consequently, we derive the following result
		\begin{equation}\label{eq:lemma1_ineq_lambda1_eps1}
			\lambda_{1,k + 1} \leq \lambda_{1,k} + \varepsilon_{1,k}\,,
		\end{equation}
		where
		\begin{equation*}
			\lambda_{1,k} = \alpha_{1,k} + \frac{1}{2} \brac{\alpha + \beta \gamma_{1,k - 1}} \gamma_{1,k}\,, \quad \varepsilon_{1,k} = \frac{1}{2} \alpha \brac{\gamma_{1,k - 1} - \gamma_{1,k}} + \tau \brac{\sqrt{\alpha_{1,k}} + \sqrt{\alpha_{1,k + 1}}} \norm{f_{1,k} - a_1 B v_k}\,.
		\end{equation*}
		By involving the telescoping series cancellation technique to inequality \eqref{eq:lemma1_ineq_lambda1_eps1}, the following result is obtained
		\begin{align*}
			\lambda_{1,k + 1} &\leq \lambda_{1,1} + \sum_{i = 1}^{k} \varepsilon_{1,i} \\
			&= \lambda_{1,1} + \frac{1}{2} \alpha \sum_{i = 1}^{k} \brac{\gamma_{1,i - 1} - \gamma_{1,i}} + \tau \sum_{i = 1}^{k} \brac{\sqrt{\alpha_{1,i}} + \sqrt{\alpha_{1,i + 1}}} \norm{f_{1,i} - a_1 B v_i} \\
			&= \lambda_{1,1} + \frac{1}{2} \alpha \brac{\gamma_{1,0} - \gamma_{1,k}} + \tau \sum_{i = 1}^{k} \brac{\sqrt{\alpha_{1,i}} + \sqrt{\alpha_{1,i + 1}}} \norm{f_{1,i} - a_1 B v_i}\,.
		\end{align*}
		Subsequently, by rearranging the terms and taking into account that $\alpha_{1,i} \leq \lambda_{1,i}$, we derive the following result
		\begin{align*}
			\lambda_{1,k + 1} + \frac{1}{2} \alpha \gamma_{1,k} \leq \lambda_{1,1} + \frac{1}{2} \alpha \gamma_{1,0} + \tau \sum_{i = 1}^{k} \brac{\sqrt{\lambda_{1,i}} + \sqrt{\lambda_{1,i + 1}}} \norm{f_{1,i} - a_1 B v_i}\,.
		\end{align*}
		Upon introducing the notation
		\begin{equation*}
			\delta_{1,k} = \sqrt{\lambda_{1,k} + \frac{1}{2} \alpha \gamma_{1,k - 1}}\,,
		\end{equation*}
		it follows that
		\begin{equation}\label{eq:lemma1_ineq_delta1}
			\delta_{1,k + 1}^{2} \leq \delta_{1,1}^{2} + \tau \sum_{i = 1}^{k} \brac{\delta_{1,i} + \delta_{1,i + 1}} \norm{f_{1,i} - a_1 B v_i}\,.
		\end{equation}
		By employing the technique for inequality \eqref{eq:lemma1_ineq_delta1} as described in \citepbf{Theorem 2.1}{RogavaTsiklauri2014}, we arrive at the following conclusion
		\begin{equation}\label{eq:lemma1_final_ineq_dlt1}
			\delta_{1,k + 1} \leq \delta_{1,1} + 2 \tau \sum_{i = 1}^{k} \norm{f_{1,i} - a_1 B v_i}\,.
		\end{equation}
		
		By performing the inner product of both sides of equation \eqref{eq:semi_discrete_timosh_2} with $v_{k + 1} - v_{k - 1} = \Delta v_k + \Delta v_{k - 1}$, and considering the self-adjoint and positive-definite nature of the operator $A$, one can immediately deduce that
		\begin{equation*}
			\alpha_{2,k + 1} + \frac{1}{2} \gamma_{2,k + 1} = \alpha_{2,k} + \frac{1}{2} \gamma_{2,k - 1} + \brac{f_{2,k} - a_2 B u_k,\Delta v_k} + \brac{f_{2,k} - a_2 B u_k,\Delta v_{k - 1}}\,,
		\end{equation*}
		where
		\begin{equation*}
			\alpha_{2,k} = {\norm{\frac{\Delta v_{k - 1}}{\tau}}}^{2}\,,\quad \gamma_{2,k} = {\norm{L^{\nicefrac{1}{2}} v_k}}^{2}\,,\quad L = \gamma A + \delta C\,.
		\end{equation*}
		If we adopt the approach employed to establish inequality \eqref{eq:lemma1_final_ineq_dlt1}, the ensuing result follows
		\begin{equation}\label{eq:lemma1_final_ineq_dlt2}
			\delta_{2,k + 1} \leq \delta_{2,1} + 2 \tau \sum_{i = 1}^{k} \norm{f_{2,i} - a_2 B u_i}\,.
		\end{equation}
		Here,
		\begin{equation*}
			\delta_{2,k} = \sqrt{\lambda_{2,k} + \frac{1}{2} \gamma_{2,k - 1}}\,,\quad \lambda_{2,k} = \alpha_{2,k} + \frac{1}{2} \gamma_{2,k}\,.
		\end{equation*}
		By summing inequalities \eqref{eq:lemma1_final_ineq_dlt1} and \eqref{eq:lemma1_final_ineq_dlt2}, taking into account condition \eqref{eq:B_cond}, and introducing the notation $\delta_k = \delta_{1,k} + \delta_{2,k}$, it consequently follows that
		\begin{align}\label{eq:lemma1_ineq_delta}
			\delta_{k + 1} &\leq \delta_1 + 2 \tau \sum_{i = 1}^{k} \brac{\norm{f_{1,i}} + \norm{f_{2,i}}} + 2 \tau \sum_{i = 1}^{k}\brac{\abs{a_1} \norm{B v_i} + \abs{a_2} \norm{B u_i}}\nonumber \\
			&\leq \delta_1 + 2 \tau \sum_{i = 1}^{k} \brac{\norm{f_{1,i}} + \norm{f_{2,i}}} + \cstDeltaIneq \tau \sum_{i = 1}^{k} \brac{\sqrt{\gamma_{1,i}} + \sqrt{\gamma_{2,i}}}\,,\quad \cstDeltaIneq = 2 \cstBoper \max\brac{\abs{a_1},\abs{a_2}}\,.
		\end{align}
		Observe that the following straightforward inequality holds
		\begin{align*}
			\delta_k = \delta_{1,k} + \delta_{2,k} \geq \frac{1}{\sqrt{2}} \brac{\sqrt{\alpha}\sqrt{\gamma_{1,k}} + \sqrt{\gamma_{2,k}}} \geq \frac{1}{\sqrt{2}} \min\brac{1,\sqrt{\alpha}}\brac{\sqrt{\gamma_{1,k}} + \sqrt{\gamma_{2,k}}}\,,
		\end{align*}
		Hence, it follows that
		\begin{equation}\label{eq:lemma1_sqrt_gamma}
			\sqrt{\gamma_{1,k}} + \sqrt{\gamma_{2,k}} \leq \cstGammaIneq \delta_k\,,\quad \cstGammaIneq = \sqrt{2} \max\brac{1,\frac{1}{\sqrt{\alpha}}}\,.
		\end{equation}
		Considering inequality \eqref{eq:lemma1_sqrt_gamma} in the estimation of \eqref{eq:lemma1_ineq_delta} yields the finding that
		\begin{equation*}
			\delta_{k + 1} \leq \delta_1 + \cstFinalGammaIneq \tau \sum_{i = 1}^{k} \delta_i + 2 \tau \sum_{i = 1}^{k} \brac{\norm{f_{1,i}} + \norm{f_{2,i}}}\,.
		\end{equation*}
		Hence, through employing the discrete Gr\"{o}nwall-type inequality ({\cf}, {\eg}, \citepbf{Lemma 3.1}{RogavaVashakidze2024}), it can be established that
		\begin{equation}\label{eq:lemma1_gronwall_ineqt}
			\delta_{k + 1} \leq e^{\cstFinalGammaIneq t_k} \brac{\delta_1 + 2 \tau \sum_{i = 1}^{k} \brac{\norm{f_{1,i}} + \norm{f_{2,i}}}}\,.
		\end{equation}
		Upon consideration of the estimate,
		\begin{equation*}
			\sum_{i = 1}^{k} \brac{\norm{f_{1,i}} + \norm{f_{2,i}}} \leq k \max_{1 \leq i \leq k} \brac{\norm{f_{1,i}} + \norm{f_{2,i}}}\,,
		\end{equation*}
		it is evident that inequality \eqref{eq:lemma1_gronwall_ineqt} can be formulated as
		\begin{equation*}
			\delta_{k + 1} \leq e^{\cstFinalGammaIneq T} \brac{\delta_1 + 2 T \max_{1 \leq i \leq n} \brac{\norm{f_{1,i}} + \norm{f_{2,i}}}}\,.
		\end{equation*}
		Thus, it can be inferred that $\alpha_{1,k}$, $\alpha_{2,k}$, $\gamma_{1,k}$, and $\gamma_{2,k}$ are uniformly bounded.
	\end{proof}
	
	\section{Uniform Boundedness of High-Order Terms Corresponding to the Solution of the Discrete Problem}\label{sec:lemma2}
	
	It should be emphasized that the proof of the convergence of the approximate solution depends, among other considerations, on the uniform boundedness of the vectors $A^{\nicefrac{1}{2}} \Delta u_{k - 1} / {\tau}$ and $A u_k$. This is quite natural, as the behavior of the solution to the discrete problem is predominantly governed by the first equation \eqref{eq:semi_discrete_timosh_1} of the system, which represents a difference analogue of the nonlinear Kirchhoff equation (see \citep{RogavaVashakidze2024}), excluding the term $a_1 B v_k$.
	
	In this \hyperref[sec:lemma2]{section}, we establish that the vectors $A^{\nicefrac{1}{2}} \Delta u_{k - 1} / {\tau}$ and $A u_k$ are locally uniformly bounded. The proof of this result relies on the nonlinear inequality outlined in \lemref{lemma:rogava-tsiklauri1}.
	\begin{lemma}\label{prop:lemma2}
		The sequences of vectors $A^{\nicefrac{1}{2}} \Delta u_{k - 1} / {\tau}$ and $A u_k$ are locally uniformly bounded. Specifically, there exists $\overline{T} > 0$ such that
		\begin{equation*}
			\norm{A^{\nicefrac{1}{2}} \frac{\Delta u_{k - 1}}{\tau}} \leq \cstmfive\,,\quad \norm{A u_k} \leq \cstmsix\,,\quad k = 1, 2, \ldots, \qbrac{\frac{\overline{T}}{\tau}}\,,
		\end{equation*}
		where $\cstmfive$ and $\cstmsix$ are positive constants, each dependent on the parameter $\overline{T}$.
	\end{lemma}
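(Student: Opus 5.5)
We take the inner product of both sides of \eqref{eq:semi_discrete_timosh_1} with $A\brac{u_{k + 1} - u_{k - 1}} = A\brac{\Delta u_k + \Delta u_{k - 1}}$. This is the energy argument of \lemref{prop:lemma1} lifted by one power of $A^{\nicefrac{1}{2}}$. Write $q_k = \alpha + \beta\gamma_{1,k}$, where $\gamma_{1,k} = {\norm{A^{\nicefrac{1}{2}} u_k}}^2$ is already uniformly bounded by $\cstmthree^2$. Using self-adjointness of $A$, we obtain
\begin{equation*}
\alpha_{3,k + 1} + \frac{q_k}{2}\gamma_{3,k + 1} = \alpha_{3,k} + \frac{q_k}{2}\gamma_{3,k - 1} + \brac{A^{\nicefrac{1}{2}}\brac{f_{1,k} - a_1 B v_k}, A^{\nicefrac{1}{2}}\brac{\Delta u_k + \Delta u_{k - 1}}}\,,
\end{equation*}
with $\alpha_{3,k} = {\norm{A^{\nicefrac{1}{2}}\Delta u_{k - 1}/\tau}}^2$ and $\gamma_{3,k} = {\norm{A u_k}}^2$. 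Unlike in \lemref{prop:lemma1}, the coefficient $q_k$ changes with $k$, so the sum does not telescope. Set $\lambda_{3,k} = \alpha_{3,k} + \frac{q_{k - 1}}{2}\gamma_{3,k}$. Rewriting gives
\begin{equation*}
\lambda_{3,k + 1} \leq \lambda_{3,k} + \frac{q_k - q_{k - 1}}{2}\gamma_{3,k} + (\text{right-hand side terms}),
\end{equation*}
after the usual shift of the $\gamma_{3,k - 1}$ term. This shift is handled exactly as the $\frac12\alpha(\gamma_{1,k - 1} - \gamma_{1,k})$ term was, by working with $\gamma_{3,k} + \gamma_{3,k - 1}$ in a combined energy.

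The key step is to estimate the defect $q_k - q_{k - 1} = \beta\brac{\gamma_{1,k} - \gamma_{1,k - 1}}$. We write
\begin{equation*}
\gamma_{1,k} - \gamma_{1,k - 1} = \brac{A\Delta u_{k - 1}, u_k + u_{k - 1}} = \tau\brac{A^{\nicefrac{1}{2}}\frac{\Delta u_{k - 1}}{\tau}, A^{\nicefrac{1}{2}}\brac{u_k + u_{k - 1}}}\,,
\end{equation*}
which is bounded by $2\tau\cstmthree\sqrt{\alpha_{3,k}}$. Hence the defect term is at most $c\,\tau\sqrt{\alpha_{3,k}}\gamma_{3,k} \leq c\,\tau\lambda_{3,k}^{\nicefrac{3}{2}}$. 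Here $q_{k - 1} \geq \alpha$ makes $\gamma_{3,k} \leq 2\lambda_{3,k}/\alpha$, and we use $\sqrt{\alpha_{3,k}} \leq \sqrt{\lambda_{3,k}}$.

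For the forcing term, Cauchy--Schwarz gives a bound by $\tau\brac{\sqrt{\alpha_{3,k}} + \sqrt{\alpha_{3,k + 1}}}\brac{\norm{A^{\nicefrac{1}{2}} f_{1,k}} + \abs{a_1}\norm{A^{\nicefrac{1}{2}} B v_k}}$. By \remref{prop:remark4}, $\norm{A^{\nicefrac{1}{2}} B v_k} \leq \cstrmkthree\norm{A v_k}$. By \remref{prop:remark5}, $\norm{A v_k} \leq \hat M_{k - 1} + \cstrmkfora\tau\sum_{i < k}\norm{A u_i}$, which is controlled by a constant plus $T\max_{i \leq k}\sqrt{\lambda_{3,i}}$ times a constant. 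The $\sqrt{\alpha_{3,k + 1}}$ factor on the right is absorbed in the standard way, either by $\sqrt{ab} \leq \frac{a}{2} + \frac{b}{2}$ with $\tau$ small, or by a square-root argument as in \eqref{eq:lemma1_ineq_delta1}--\eqref{eq:lemma1_final_ineq_dlt1}.

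Next we pass to $\mu_k = \max\brac{1, \max_{i \leq k}\lambda_{3,i}}$, which is monotone and so absorbs the history sum coming from $v$. This yields the recursion
\begin{equation*}
\mu_{k + 1} \leq \mu_k\brac{1 + \tau c\,\mu_k^{\nicefrac{1}{2}}} + \tau c_k\,,
\end{equation*}
with $c_k$ bounded in terms of $\max_i\norm{A^{\nicefrac{1}{2}} f_{1,i}}$, $\max_i\norm{A^{\nicefrac{1}{2}} f_{2,i}}$ and the initial data $\norm{A v_0}$, $\norm{A^{\nicefrac{1}{2}}\Delta v_0/\tau}$, $\norm{A\Delta v_0/\tau}$. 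The last three are bounded via \eqref{eq:initial_vect02}, assuming sufficient regularity of $\psi_0$, $\psi_1$ and $\varphi_0$, $\varphi_1$, which we add as hypotheses. Rescaling $\mu$ by the constant $c$ puts this in the exact form of \lemref{lemma:rogava-tsiklauri1} with $s = \nicefrac{1}{2}$. That lemma gives $\mu_k \leq \mu/\brac{1 - \frac12\mu^{\nicefrac12} t_k a_k}^{2}$ for $t_k$ below the blow-up threshold. We choose $\overline T$ strictly smaller, for instance half the threshold, which gives $\cstmfive$ and $\cstmsix$ depending on $\overline T$. We also need $\lambda_{3,1}$ bounded independently of $\tau$. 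This follows from \eqref{eq:initial_vect01}, since $A^{\nicefrac{1}{2}}\Delta u_0/\tau = A^{\nicefrac{1}{2}}\brac{\varphi_1 + \frac{\tau}{2}\varphi_2}$, given $\varphi_0 \in D\brac{A^{\nicefrac{3}{2}}}$ and related assumptions.

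The main obstacle is the non-telescoping nonlinear defect together with the coupling. We must make sure the $v$-history term from \remref{prop:remark5} enters only linearly in $\max\sqrt{\lambda_{3,i}}$, so that the final recursion is of the Lemma~\ref{lemma:rogava-tsiklauri1} type rather than something worse. Taking the running maximum $\mu_k$ is what we will try first to handle this.
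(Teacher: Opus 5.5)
Your proposal is correct and follows the paper's proof essentially step by step. Both pair \eqref{eq:semi_discrete_timosh_1} with $A(u_{k+1}-u_{k-1})$, use the averaged energy $\frac12\bigl(\|Au_k\|^2+\|Au_{k-1}\|^2\bigr)$ to expose the defect $\beta(\gamma_{1,k}-\gamma_{1,k-1})$, bound that defect by $2M_3\tau$ times the discrete $A^{\nicefrac{1}{2}}$-velocity via \lemref{prop:lemma1}, control the coupling through \remref{prop:remark4} and \remref{prop:remark5}, pass to a running maximum, and close with \lemref{lemma:rogava-tsiklauri1} for $s=\nicefrac{1}{2}$.

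The only difference is technical. The paper absorbs the $\sqrt{\hat\lambda_{1,k+1}}$ terms using Young's inequality with $\nicefrac{3}{4}$-powers, a quartic-root bound, and the rescaling $\xi_k=\hat\lambda_{1,k}/(1+c\tau)^k$. Your floor $\mu_k\ge 1$, combined with an elementary small-$\tau$ absorption, folds the linear growth directly into the $\mu_k^{\nicefrac{3}{2}}$ term, which is slightly simpler. Your explicit extra regularity on the initial data also makes precise what the paper summarizes as $\hat\lambda\le\widetilde M$.
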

	\begin{proof}
		Consider taking the inner product of both sides of equation \eqref{eq:semi_discrete_timosh_1} with $A\brac{u_{k + 1} - u_{k - 1}} = A\brac{\Delta u_k} + A\brac{\Delta u_{k - 1}}$. By leveraging the properties of the operator $A$, which is self-adjoint and positive-definite, we obtain:
		\begin{align}\label{eq:lemma2_first_norm_eq}
			{\norm{\frac{1}{\tau} A^{\nicefrac{1}{2}} \brac{\Delta u_k}}}^2 &+ \frac{1}{2} \brac{\alpha + \beta {\norm{A^{\nicefrac{1}{2}} u_k}}^{2}} {\norm{A u_{k + 1}}}^2\nonumber \\
			&= {\norm{\frac{1}{\tau} A^{\nicefrac{1}{2}} \brac{\Delta u_{k - 1}}}}^2 + \frac{1}{2} \brac{\alpha + \beta {\norm{A^{\nicefrac{1}{2}} u_k}}^{2}} {\norm{A u_{k - 1}}}^2\nonumber \\
			&+ \brac{A^{\nicefrac{1}{2}} \brac{f_{1,k} - a_1 B v_k},A^{\nicefrac{1}{2}}\brac{\Delta u_k}} + \brac{A^{\nicefrac{1}{2}} \brac{f_{1,k} - a_1 B v_k},A^{\nicefrac{1}{2}}\brac{\Delta u_{k - 1}}}\,.
		\end{align}
		In this context, we assume that $f_{1,k} - a_1 B v_k$ belongs to $D\brac{A^{\nicefrac{1}{2}}}$.
		
		By employing the Cauchy-Schwarz inequality along with inequality \eqref{eq:remark4_norm_halfA_B} from \remref{prop:remark4}, the following result can be derived
		\begin{align}\label{eq:lemma2_f_k_cauchy_schwarz}
			&\abs{\brac{A^{\nicefrac{1}{2}} \brac{f_{1,k} - a_1 B v_k},A^{\nicefrac{1}{2}}\brac{\Delta u_k}} + \brac{A^{\nicefrac{1}{2}} \brac{f_{1,k} - a_1 B v_k},A^{\nicefrac{1}{2}}\brac{\Delta u_{k - 1}}}}\nonumber \\
			\leq& \brac{ \norm{A^{\nicefrac{1}{2}} f_{1,k}} + \abs{a_1} \cstrmkthree \norm{A v_k} } \tau \brac{ \norm{\frac{1}{\tau} A^{\nicefrac{1}{2}} \brac{\Delta u_k}} + \norm{\frac{1}{\tau} A^{\nicefrac{1}{2}} \brac{\Delta u_{k - 1}}} }\,.
		\end{align}
		Let us introduce the following denotations:
		\begin{equation*}
			\widetilde{\alpha}_{1,k} = {\norm{\frac{1}{\tau} A^{\nicefrac{1}{2}} \brac{\Delta u_{k - 1}}}}^2\,,\quad \beta_{1,k} = {\norm{A u_k}}^2\,,\quad \gamma_{1,k} = {\norm{A^{\nicefrac{1}{2}} u_k}}^{2}\,,\quad \sigma_{1,k} = \max_{1 \leq i \leq k} \norm{A^{\nicefrac{1}{2}} f_{1,i}}\,.
		\end{equation*}
		By substituting the notations introduced in the preceding step and employing inequality \eqref{eq:lemma2_f_k_cauchy_schwarz}, we can rewrite equality \eqref{eq:lemma2_first_norm_eq} as follows
		\begin{align}\label{eq:lemma2_notat_ineq_main}
			\widetilde{\alpha}_{1,k + 1} + \frac{1}{2} \brac{\alpha + \beta \gamma_{1,k}} \beta_{1,k + 1} &\leq \widetilde{\alpha}_{1,k} + \frac{1}{2} \brac{\alpha + \beta \gamma_{1,k}} \beta_{1,k - 1}\nonumber \\
			&+ \brac{ \sigma_{1,k} + \abs{a_1} \cstrmkthree \norm{A v_k} } \tau \brac{ \sqrt{\widetilde{\alpha}_{1,k + 1}} + \sqrt{\widetilde{\alpha}_{1,k}} }\,.
		\end{align}
		By incorporating \remref{prop:remark5} into inequality \eqref{eq:lemma2_notat_ineq_main}, we obtain the subsequent result
		\begin{align*}
			\widetilde{\alpha}_{1,k + 1} &+ \frac{1}{2} \brac{\alpha + \beta \gamma_{1,k}} \beta_{1,k + 1} \leq \widetilde{\alpha}_{1,k} + \frac{1}{2} \brac{\alpha + \beta \gamma_{1,k}} \beta_{1,k - 1} \\
			&+ \widetilde{M}_{k - 1} \tau \brac{ \sqrt{\widetilde{\alpha}_{1,k + 1}} + \sqrt{\widetilde{\alpha}_{1,k}} } + \cstmainalph \tau^2 \brac{ \sqrt{\widetilde{\alpha}_{1,k + 1}} + \sqrt{\widetilde{\alpha}_{1,k}} } \sum_{i = 1}^{k - 1} \sqrt{\beta_{1,i}}\,,
		\end{align*}
		where $\widetilde{M}_{k - 1} = \sigma_{1,k} + \abs{a_1} \cstrmkthree \hat{M}_{k - 1}$ and $\cstmainalph = \abs{a_1} \cstrmkthree \cstrmkfora = \abs{a_1 a_2} \cstrmkthree^2 \cstrmkforb$.
		
		Continuing from the previous step, by adding the term $\frac{1}{2} \brac{\alpha + \beta \gamma_{1,k}} \beta_{1,k}$ to both sides of the previous inequality, and introducing the notation $\widetilde{\beta}_{1,k} = \frac{1}{2} \brac{\beta_{1,k} + \beta_{1,k - 1}}$, we obtain
		\begin{align}\label{eq:lemma2_tilde_M}
			\widetilde{\alpha}_{1,k + 1} &+ \brac{\alpha + \beta \gamma_{1,k}} \widetilde{\beta}_{1,k + 1} \leq \widetilde{\alpha}_{1,k} + \brac{\alpha + \beta \gamma_{1,k - 1}} \widetilde{\beta}_{1,k} + \beta\brac{ \gamma_{1,k} - \gamma_{1,k - 1} }\widetilde{\beta}_{1,k}\nonumber \\
			&+ \widetilde{M}_{k - 1} \tau \brac{ \sqrt{\widetilde{\alpha}_{1,k + 1}} + \sqrt{\widetilde{\alpha}_{1,k}} } + \cstmainalph \tau^2 \brac{ \sqrt{\widetilde{\alpha}_{1,k + 1}} + \sqrt{\widetilde{\alpha}_{1,k}} } \sum_{i = 1}^{k - 1} \sqrt{\beta_{1,i}}\,.
		\end{align}
		To evaluate the absolute value of the difference $\abs{\gamma_{1,k} - \gamma_{1,k - 1}}$, we shall employ \lemref{prop:lemma1}
		\begin{equation}\label{eq:lemma2_diff_gamma}
			\begin{aligned}
				\abs{\gamma_{1,k} - \gamma_{1,k - 1}} &\leq 2 \cstmthree \norm{A^{\nicefrac{1}{2}} \brac{\Delta u_{k - 1}}} = 2 \cstmthree \tau \sqrt{\widetilde{\alpha}_{1,k}}\,.
			\end{aligned}
		\end{equation}
		Through the use of inequalities \eqref{eq:lemma2_diff_gamma} and $\beta_{1,i} \leq 2 \widetilde{\beta}_{1,i}$, we can reformulate inequality \eqref{eq:lemma2_tilde_M} as follows
		\begin{align}\label{eq:lemma2_tilde_lam_ineq}
			\widetilde{\lambda}_{1,k + 1} &\leq \widetilde{\lambda}_{1,k} + 2 \cstmthree \beta \tau \sqrt{\widetilde{\alpha}_{1,k}} \widetilde{\beta}_{1,k} +
			\widetilde{M}_{k - 1} \tau \brac{ \sqrt{\widetilde{\alpha}_{1,k + 1}} + \sqrt{\widetilde{\alpha}_{1,k}} }\nonumber \\
			&+ \sqrt{2} \cstmainalph \brac{ \sqrt{\widetilde{\alpha}_{1,k + 1}} + \sqrt{\widetilde{\alpha}_{1,k}} } \tau^2 \sum_{i = 1}^{k - 1} \sqrt{\widetilde{\beta}_{1,i}}\,,
		\end{align}
		where $\widetilde{\lambda}_{1,k} = \widetilde{\alpha}_{1,k} + \brac{\alpha + \beta \gamma_{1,k - 1}} \widetilde{\beta}_{1,k}$.
		
		Given the inequalities $\widetilde{\alpha}_{1,k} \leq \widetilde{\lambda}_{1,k}$ and $\widetilde{\beta}_{1,k} \leq \frac{1}{\alpha} \widetilde{\lambda}_{1,k}$, inequality \eqref{eq:lemma2_tilde_lam_ineq} should be rewritten as
		\begin{align*}
			\widetilde{\lambda}_{1,k + 1} &\leq \widetilde{\lambda}_{1,k} + \csttildelama \tau \sqrt{\widetilde{\lambda}_{1,k}} \widetilde{\lambda}_{1,k} + \widetilde{M}_{k} \tau \brac{ \sqrt{\widetilde{\lambda}_{1,k + 1}} + \sqrt{\widetilde{\lambda}_{1,k}} } \\
			&+ \csttildelamb \tau^2 \brac{ \sqrt{\widetilde{\lambda}_{1,k + 1}} + \sqrt{\widetilde{\lambda}_{1,k}} } \sum_{i = 1}^{k} \sqrt{\widetilde{\lambda}_{1,i}}\,,\quad \csttildelama = \frac{2 \cstmthree \beta}{\alpha}\,,\quad \csttildelamb = \frac{2 \cstmainalph}{\sqrt{2 \alpha}}\,.
		\end{align*}
		
		Let us denote the maximum of $\widetilde{\lambda}_{1,i}$ for $1 \leq i \leq k$ by $\hat{\lambda}_{1,k}$, {\ie}, $\hat{\lambda}_{1,k} = \max_{1 \leq i \leq k} \widetilde{\lambda}_{1,i}$. Let $\widetilde{\lambda}_{1,i + 1}$ (for $i \leq k$) attain its maximum at $i = j$. Thus, we have $\hat{\lambda}_{1,k + 1} = \widetilde{\lambda}_{1,j + 1}$. Clearly, from the aforementioned inequality, we find that
		\begin{align*}
			\widetilde{\lambda}_{1,j + 1} &\leq \hat{\lambda}_{1,j} + \csttildelama \tau \sqrt{\hat{\lambda}_{1,j}} \hat{\lambda}_{1,j} + \widetilde{M}_{j} \tau \brac{ \sqrt{\hat{\lambda}_{1,j + 1}} + \sqrt{\hat{\lambda}_{1,j}} } \\
			&+ \csttildelamb \tau^2 \brac{ \sqrt{\hat{\lambda}_{1,j + 1}} + \sqrt{\hat{\lambda}_{1,j}} } \brac{ k \sqrt{\hat{\lambda}_{1,j}} }\,.
		\end{align*}
		Hence, it follows that
		\begin{align*}
			\hat{\lambda}_{1,k + 1} &\leq \hat{\lambda}_{1,k} + \csttildelama \tau \sqrt{\hat{\lambda}_{1,k}} \hat{\lambda}_{1,k} + \widetilde{M}_{k} \tau \brac{ \sqrt{\hat{\lambda}_{1,k + 1}} + \sqrt{\hat{\lambda}_{1,k}} } \\
			&+ \csthatlam \tau \brac{ \sqrt{\hat{\lambda}_{1,k + 1}} + \sqrt{\hat{\lambda}_{1,k}} } \sqrt{\hat{\lambda}_{1,k}}\,,\quad \csthatlam = T \csttildelamb\,.
		\end{align*}
		If we rearrange the terms on the right-hand side of the previously stated inequality, it takes the form
		\begin{equation}\label{eq:lemma2_hat_lam_rearranged}
			\hat{\lambda}_{1,k + 1} \leq \brac{ 1 + \csthatlam \tau +  \csttildelama \tau \sqrt{\hat{\lambda}_{1,k}} } \hat{\lambda}_{1,k} + \widetilde{M}_{k} \tau \brac{ \sqrt{\hat{\lambda}_{1,k + 1}} + \sqrt{\hat{\lambda}_{1,k}} } + \csthatlam \tau \sqrt{\hat{\lambda}_{1,k} \hat{\lambda}_{1,k + 1}}\,.
		\end{equation}
		By employing the well-known Young's inequality for products, one can deduce that:
		\begin{align}
			\sqrt{\hat{\lambda}_{1,k} \hat{\lambda}_{1,k + 1}} &\leq \frac{1}{3} \brac{ \hat{\lambda}_{1,k}^{\nicefrac{3}{2}} + 2 \hat{\lambda}_{1,k + 1}^{\nicefrac{3}{4}} }\,,\label{eq:lemma2_young_ineq_hat_lam} \\
			\sqrt{\hat{\lambda}_{1,k + 1}} + \sqrt{\hat{\lambda}_{1,k}} &\leq \frac{5}{6} + \frac{1}{2} \hat{\lambda}_{1,k} + \frac{2}{3} \hat{\lambda}_{1,k + 1}^{\nicefrac{3}{4}}\,.\label{eq:lemma2_young_ineq_sum_hat_lam}
		\end{align}
		By incorporating inequalities \eqref{eq:lemma2_young_ineq_hat_lam} and \eqref{eq:lemma2_young_ineq_sum_hat_lam} into bound \eqref{eq:lemma2_hat_lam_rearranged}, one can immediately deduce the following result
		\begin{equation}\label{eq:lemma2_hat_lam_final}
			\hat{\lambda}_{1,k + 1} \leq \brac{ 1 + \cstlemtwoa \tau +  \cstlemtwob \tau \sqrt{\hat{\lambda}_{1,k}} } \hat{\lambda}_{1,k} + \cstlemtwoc \tau \hat{\lambda}_{1,k + 1}^{\nicefrac{3}{4}} + \cstlemtwod \tau\,.
		\end{equation}
		Here,
		\begin{equation*}
			\cstmaxMk = \max\limits_{1 \leq k \leq n - 1} \widetilde{M}_{k}\,,\quad \cstlemtwoa = \csthatlam + \frac{\cstmaxMk}{2}\,,\quad \cstlemtwob = \csttildelama + \frac{\csthatlam}{3}\,,\quad \cstlemtwoc = \frac{2}{3}\brac{ \csthatlam + \cstmaxMk }\,,\quad \cstlemtwod = \frac{5 \cstmaxMk}{6}\,.
		\end{equation*}
		By introducing the notations
		\begin{equation*}
			w_k = \brac{ 1 + \cstlemtwoa \tau +  \cstlemtwob \tau \sqrt{\hat{\lambda}_{1,k}} } \hat{\lambda}_{1,k} + \cstlemtwod \tau
		\end{equation*}
		and $y_{k + 1} = \sqrt[4]{\hat{\lambda}_{1,k + 1}}$, the estimate \eqref{eq:lemma2_hat_lam_final} should be rewritten as follows
		\begin{equation*}
			y_{k + 1}^4 - \cstlemtwoc \tau y_{k + 1}^3 - w_k \leq 0\,.
		\end{equation*}
		From here, it follows
		\begin{equation}\label{eq:lemma2_fourth_ord_ineq}
			\brac{\frac{y_{k + 1}}{\cstlemtwoc \tau}}^4 - \brac{\frac{y_{k + 1}}{\cstlemtwoc \tau}}^3 - \frac{w_k}{\brac{\cstlemtwoc \tau}^4} \leq 0\,.
		\end{equation}
		
		Consider the polynomial $P\brac{ \xi } = \xi^4 - \xi^3 - b$, where $b = w_k / \brac{\cstlemtwoc \tau}^4$. This polynomial corresponds to the left-hand side of inequality \eqref{eq:lemma2_fourth_ord_ineq}. It is evident that $P\brac{ \xi }$ has exactly two real roots, one positive and one negative. Since $P\brac{ 0 } = -b < 0$ and $P\brac{ 1 + \sqrt[4]{b} } > 0$, it follows that the positive root of $P\brac{ \xi }$ must be less than $1 + \sqrt[4]{b}$. Considering these facts, from inequality \eqref{eq:lemma2_fourth_ord_ineq}, we have
		\begin{equation*}
			\frac{y_{k + 1}}{\cstlemtwoc \tau} \leq 1 + \sqrt[4]{b}\,,
		\end{equation*}
		or which is the same
		\begin{equation*}
			y_{k + 1} \leq \cstlemtwoc \tau + \sqrt[4]{w_k}\,.
		\end{equation*}
		By squaring both sides of the given inequality and substituting the product term with the sum of squares, we obtain the following
		\begin{equation*}
			y_{k + 1}^2 \leq \brac{1 + \cstlemtwoc \tau} \brac{\cstlemtwoc \tau + \sqrt{w_k}}\,.
		\end{equation*}
		If we apply the same transformation as in the previous case, we have
		\begin{equation}\label{eq:lemma2_fourth_y}
			y_{k + 1}^4 \leq \brac{1 + \cstlemtwoc \tau}^3 \brac{\cstlemtwoc \tau + w_k}\,.
		\end{equation}
		We may now revert to the previous notation. In this case, inequality \eqref{eq:lemma2_fourth_y} takes the form
		\begin{equation}\label{eq:lemma2_hat_lam_bef_div}
			\hat{\lambda}_{1,k + 1} \leq \brac{1 + \cstmaxab \tau}^4 \brac{ 1 +  \frac{\cstlemtwob \tau}{1 + \cstlemtwoa \tau} \sqrt{\hat{\lambda}_{1,k}} } \hat{\lambda}_{1,k} + \brac{1 + \cstmaxab \tau}^3 \cstykplusonefin \tau\,,
		\end{equation}
		where $\cstykplusonefin = \cstlemtwoc + \cstlemtwod$ and $\cstmaxab = \max\brac{ \cstlemtwoa,\cstlemtwoc }$.
		
		If we replace $\brac{1 + \cstmaxab \tau}^4$ with $1 + \cstfourthpoly \tau$ in inequality \eqref{eq:lemma2_hat_lam_bef_div}, and subsequently divide both sides of the resulting inequality by $\brac{1 + \cstfourthpoly \tau}^{k + 1}$, performing straightforward transformations, we obtain the following result
		\begin{equation*}
			\xi_{k + 1} \leq \xi_k \brac{1 + \cstsqrtfracinxi \tau \sqrt{\xi_k}} + \cstxikfin \tau\,,
		\end{equation*}
		where
		\begin{equation*}
			\xi_k = \frac{\hat{\lambda}_{1,k}}{\brac{1 + \cstfourthpoly \tau}^k}\,.
		\end{equation*}
		Consider the transformation where $\overline{\tau} = \cstoverlinetau \tau$ with $\cstoverlinetau = \max\brac{\cstsqrtfracinxi,\cstxikfin}$. Then we have
		\begin{equation*}
			\xi_{k + 1} \leq \xi_k \brac{1 + \overline{\tau} \sqrt{\xi_k}} + \overline{\tau}\,.
		\end{equation*}
		From \lemref{lemma:rogava-tsiklauri1}, it follows that
		\begin{equation}\label{eq:lemma2_xi_and_hat_gamma}
			\xi_k \leq \frac{\xi}{\brac{1 - \overline{t}_k \sqrt{\xi}}^2} \leq \frac{\hat{\lambda}}{\brac{1 - \cstoverlinetau \sqrt{\hat{\lambda}} t_k}^2}\,,\quad k = 1,2,\ldots,m\,,
		\end{equation}
		where
		\begin{equation*}
			\xi = \max\brac{1,\xi_1} \leq \max\brac{1,\hat{\lambda}_{1,1}} = \hat{\lambda}\,,\quad \overline{t}_k = k \overline{\tau} = \cstoverlinetau t_k < \frac{1}{\sqrt{\hat{\lambda}}} \leq \frac{1}{\sqrt{\xi}}\,.
		\end{equation*}
		Following the established notation, we have
		\begin{equation}\label{eq:lemma2_xi_with_exp_denom}
			\xi_k = \frac{\hat{\lambda}_{1,k}}{\brac{1 + \cstfourthpoly \tau}^k} \geq \frac{\hat{\lambda}_{1,k}}{e^{\cstfourthpoly t_k}}\,.
		\end{equation}
		Using inequality \eqref{eq:lemma2_xi_and_hat_gamma} along with \eqref{eq:lemma2_xi_with_exp_denom}, we obtain the following estimate
		\begin{equation}\label{eq:lemma2_final_ineq}
			\hat{\lambda}_{1,k} \leq \frac{\hat{\lambda}}{\brac{1 - \cstoverlinetau \sqrt{\hat{\lambda}} t_k}^2} e^{\cstfourthpoly t_k}\,,\quad k = 1,2,\ldots,m\,.
		\end{equation}
		
		It should be noted that the value of $m$ in the inequality \eqref{eq:lemma2_final_ineq} is influenced both by the coefficient of $t_k$ (appearing in the denominator of the fraction) and by the number of subdivisions of the time interval, denoted by $n$. The coefficient of $t_k$ can be explicitly determined based on the data provided in problem \eqref{eq:abst_timoshenko1}-\eqref{eq:abst_timoshenko3}, while also taking into account the value of $T$. Furthermore, the inequality ${\hat \lambda} \leq {\widetilde{M}}$ is satisfied, where ${\widetilde{M}}$ represents a positive constant that is dependent on the original data from the stated problem \eqref{eq:abst_timoshenko1}-\eqref{eq:abst_timoshenko3}, as well as the value of $T$.
		
		The following inequality is derived from \eqref{eq:lemma2_final_ineq}
		\begin{equation}\label{eq:lemma2_final_result_loc_bound}
			\hat{\lambda}_{1,k} \leq \frac{\widetilde{M}}{\brac{1 - \overline{M}{\,}\overline{T}}^2} e^{\cstfourthpoly \overline{T}}\,,\quad k = 1, 2, \ldots, \qbrac{\frac{\overline{T}}{\tau}}\,,
		\end{equation}
		where $\overline{M} = \cstoverlinetau \sqrt{\widetilde{M}}$ and $\displaystyle \overline{T} = \frac{q}{\overline{M}}$, with $0 < q < 1$.
		
		The inequality given in \eqref{eq:lemma2_final_result_loc_bound} implies that the vectors $A u_k$ and $A^{\nicefrac{1}{2}} \Delta u_{k - 1} / {\tau}$ are uniformly bounded over the local interval $\qbrac{0,\overline{T}}$.
	\end{proof}
	One should observe that the local uniform boundedness of the vectors $A v_k$ follows directly from \remref{prop:remark5}.
	
	\section{Error Estimates for Approximate Solutions of the System}\label{sec:convergence}
	
	Before the theorem concerning the convergence of the scheme \eqref{eq:semi_discrete_timosh_1}-\eqref{eq:semi_discrete_timosh_2} is presented, a remark regarding the smoothness of the solutions to the problem \eqref{eq:abst_timoshenko1}-\eqref{eq:abst_timoshenko3} is made to clarify the order of convergence of the proposed symmetric three-layer semi-discrete scheme \eqref{eq:semi_discrete_timosh_1}-\eqref{eq:semi_discrete_timosh_2}. A minimum degree of smoothness in the solutions is required to ensure the well-posedness of the problem. It should be noted that this condition guarantees convergence but is insufficient for determining the order of convergence. When the smoothness of the solutions is increased by one degree, an order of convergence equal to one is achieved. Nevertheless, in this case, as well as in the previous case, the following initial conditions should be taken: $u_1 = \varphi_0 + \tau \varphi_1$ and $v_1 = \psi_0 + \tau \psi_1$. Furthermore, if the smoothness is increased by two degrees and the initial functions are specified according to formulas \eqref{eq:initial_vect01} and \eqref{eq:initial_vect02}, an additional degree of convergence is attained, resulting in a total order of two. However, any further increase in smoothness would be regarded as superfluous, since the approximation order of the scheme \eqref{eq:semi_discrete_timosh_1}-\eqref{eq:semi_discrete_timosh_2} does not exceed two.
	
	The following theorem is formulated to address the convergence of the scheme \eqref{eq:semi_discrete_timosh_1}-\eqref{eq:semi_discrete_timosh_2}.
	
	\begin{theorem}\label{prop:main_thm_convrg}
		Suppose the problem \eqref{eq:abst_timoshenko1}-\eqref{eq:abst_timoshenko3} is well-posed, and the following conditions are fulfilled:
		\begin{enumerate}[label=(\alph*)]
			\item\label{itm_theorem_a} The vectors $\varphi_0, \psi_0 \in D\brac{A}$, while the vectors $\varphi_1, \psi_1, \varphi_2$, and $\psi_2$ belong to $D\brac{A^{\nicefrac{1}{2}}}$. Additionally, the right-hand sides of equations \eqref{eq:abst_timoshenko1} and \eqref{eq:abst_timoshenko2}, specifically $f_1\brac{t}$ and $f_2\brac{t}$, are continuous functions. Moreover, $f_1\brac{t}, f_2\brac{t} \in D\brac{A^{\nicefrac{1}{2}}}$ for all $t \in \qbrac{0,T}$, and $A^{\nicefrac{1}{2}} f_1\brac{t}$ and $A^{\nicefrac{1}{2}} f_2\brac{t}$ are also continuous functions.
			\item\label{itm_theorem_b} The solutions $u\brac{t}$ and $v\brac{t}$ to the problem \eqref{eq:abst_timoshenko1}-\eqref{eq:abst_timoshenko3} are continuously differentiable up to and including third order, and the functions $u^{\prime\prime\prime}\brac{t}$ and $v^{\prime\prime\prime}\brac{t}$  satisfy the Lipschitz condition.
			\item\label{itm_theorem_c} The functions $A u\brac{t}$ and $A v\brac{t}$ are continuously differentiable. Moreover, $A u^{\prime}\brac{t}$ and $A v^{\prime}\brac{t}$ are Lipschitz continuous functions.
		\end{enumerate}
		Then there exists $\overline{T}$ $\brac{0 < \overline{T} \leq T}$ such that for the errors of the approximate solutions $z_{1,k} = u\brac{t_k} - u_k$ and $z_{2,k} = v\brac{t_k} - v_k$, the following estimates hold:
		\begin{equation*}
			\max_{1 \leq k \leq m} \norm{A^{\nicefrac{1}{2}} z_{\detectstaticarg,k}} \leq \cststatthmone \tau^2\,,\quad \max_{1 \leq k \leq m} \norm{\frac{\Delta z_{\detectstaticarg,k - 1}}{\tau}} \leq \cststatthmtwo \tau^2\,,\quad \detectstaticarg = 1,2\,,
		\end{equation*}
		where $m = \qbrac{\dfrac{\overline{T}}{\tau}}$, and $\Delta z_{\detectstaticarg,k} = z_{\detectstaticarg,k + 1} - z_{\detectstaticarg,k}$.
	\end{theorem}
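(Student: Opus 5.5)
We would derive error equations for $z_{1,k}$ and $z_{2,k}$ and apply the same energy technique as in \lemref{prop:lemma1}, with the local bounds of \lemref{prop:lemma2} controlling the nonlinear term. Write the exact solution at $t_k$ in the form of the scheme with residuals:
\begin{gather*}
\frac{\Delta^2 u\brac{t_{k-1}}}{\tau^2} + \brac{\alpha + \beta\norm{A^{\nicefrac{1}{2}}u\brac{t_k}}^2}\frac{Au\brac{t_{k+1}}+Au\brac{t_{k-1}}}{2} = f_{1,k} - a_1 Bv\brac{t_k} + R_{1,k}\,,\\
\frac{\Delta^2 v\brac{t_{k-1}}}{\tau^2} + \frac{Lv\brac{t_{k+1}}+Lv\brac{t_{k-1}}}{2} = f_{2,k} - a_2 Bu\brac{t_k} + R_{2,k}\,.
\end{gather*}
Hypotheses \ref{itm_theorem_b} and \ref{itm_theorem_c} let us Taylor-expand the residuals. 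The term $\Delta^2 u/\tau^2 - u''$ is $\bigO\brac{\tau^2}$ because $u'''$ is Lipschitz. The term $\frac12\brac{Au\brac{t_{k+1}}+Au\brac{t_{k-1}}} - Au\brac{t_k}$ is $\bigO\brac{\tau^2}$ because $Au'$ is Lipschitz. Hence $\norm{R_{j,k}}\le c\tau^2$. In addition, $\norm{\brac{R_{j,k+1}-R_{j,k}}/\tau}\le c\tau^2$ should not be needed if we only multiply by differences. Hypothesis \ref{itm_theorem_a}, together with \eqref{eq:initial_vect01}--\eqref{eq:initial_vect02}, gives $\norm{A^{\nicefrac12}z_{j,1}}+\norm{\Delta z_{j,0}/\tau}\le c\tau^2$. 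This uses the Taylor remainder of order three and the third-derivative bound.

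Subtracting the scheme gives, for $j=1$,
\begin{equation*}
\frac{\Delta^2 z_{1,k-1}}{\tau^2} + q_k\frac{Az_{1,k+1}+Az_{1,k-1}}{2} = -\brac{q\brac{t_k}-q_k}\frac{Au\brac{t_{k+1}}+Au\brac{t_{k-1}}}{2} - a_1Bz_{2,k} + R_{1,k}\,,
\end{equation*}
where $q_k=\alpha+\beta\norm{A^{\nicefrac12}u_k}^2$ and $q\brac{t_k}$ is the analogous quantity for the exact solution. The equation for $z_2$ is linear, with right-hand side $-a_2Bz_{1,k}+R_{2,k}$. The nonlinear difference is handled by
\begin{equation*}
\abs{q\brac{t_k}-q_k}\le\beta\brac{\norm{A^{\nicefrac12}u\brac{t_k}}+\norm{A^{\nicefrac12}u_k}}\norm{A^{\nicefrac12}z_{1,k}}\le c\norm{A^{\nicefrac12}z_{1,k}}\,,
\end{equation*}
which uses \lemref{prop:lemma1} for $u_k$ and the regularity of $u$. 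Since $Au$ is bounded, the first right-hand term is bounded by $c\norm{A^{\nicefrac12}z_{1,k}}$. The coupling terms satisfy $\norm{Bz_{j,k}}\le\cstBoper\norm{A^{\nicefrac12}z_{j,k}}$ by \remref{prop:remark6}.

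Next we take the inner product with $z_{j,k+1}-z_{j,k-1}$, exactly as in \lemref{prop:lemma1}. For $z_1$ the coefficient $q_k$ multiplies $\norm{A^{\nicefrac12}z_{1,k+1}}^2-\norm{A^{\nicefrac12}z_{1,k-1}}^2$. Telescoping therefore leaves a defect of the form $\brac{q_k-q_{k-1}}\norm{A^{\nicefrac12}z_{1,k}}^2$-type sums, which we control by
\begin{equation*}
\abs{q_k-q_{k-1}}\le c\norm{A^{\nicefrac12}\Delta u_{k-1}}\le c\,\tau\,\cstmfive\,.
\end{equation*}
This is exactly where \lemref{prop:lemma2} is needed, and it is why the result holds only for $k\le m=[\overline T/\tau]$. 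We define the energy $E_k=\norm{\Delta z_{1,k-1}/\tau}^2+\frac12 q_{k-1}\norm{A^{\nicefrac12}z_{1,k}}^2+\dots$, symmetrized over two layers as in \lemref{prop:lemma1}, with the analogous term in $L^{\nicefrac12}$ for $z_2$. Using $q_k\ge\alpha$ and \remref{prop:remark2}, $E_k$ is equivalent to $\norm{\Delta z/\tau}^2+\norm{A^{\nicefrac12}z}^2$ for both components. Following the square-root device of \eqref{eq:lemma1_ineq_delta1}--\eqref{eq:lemma1_final_ineq_dlt1}, we obtain
\begin{equation*}
\sqrt{E_{k+1}}\le\sqrt{E_1}+c\tau\sum_{i=1}^k\sqrt{E_i}+c\tau\sum_{i=1}^k\brac{\norm{R_{1,i}}+\norm{R_{2,i}}}\,.
\end{equation*}
The discrete Gr\"onwall inequality then gives $\sqrt{E_{k}}\le e^{c\overline T}\brac{c\tau^2+c\overline T\tau^2}$. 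This yields both asserted estimates for $j=1,2$.

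The main obstacle is the quasilinear coefficient $q_k$ changing between layers. It prevents exact telescoping and produces the term $\brac{q_k-q_{k-1}}\norm{A^{\nicefrac12}z_{1,k}}^2$. It also produces the term $\brac{q\brac{t_k}-q_k}A\bar u$, which requires $Au$ and $Au_k$ bounds. Both are covered, but only locally, by \lemref{prop:lemma2}; we would take $\overline T$ as given there. A secondary point is verifying the $\bigO\brac{\tau^2}$ residual for the averaged operator term, which relies on $Au'$ being Lipschitz rather than on $Au''$ existing.
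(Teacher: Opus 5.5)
Your proposal is correct and follows essentially the same route as the paper: error equations with $\mathcal{O}(\tau^2)$ residuals, the energy identity from pairing with $z_{j,k+1}-z_{j,k-1}$, control of the summation-by-parts defect (in the paper, $\frac{\beta}{2}\sum_i \|A^{1/2}z_{1,i}\|^2(\gamma_{1,i+1}-\gamma_{1,i-1})$) via the local bound of \lemref{prop:lemma2}, then the square-root device, summing the two energies, and the discrete Gr\"onwall inequality. One small correction to your closing paragraph: the term $(q(t_k)-q_k)\frac{1}{2}\bigl(Au(t_{k+1})+Au(t_{k-1})\bigr)$ needs only the bound on $Au(t)$ from hypothesis (c) and the global bound on $\|A^{1/2}u_k\|$ from \lemref{prop:lemma1}, not any bound on $Au_k$, so the restriction to $[0,\overline{T}]$ enters only through the defect term.
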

	\begin{proof}
		The initial step is a standard procedure. We shall derive the system corresponding to the scheme \eqref{eq:semi_discrete_timosh_1}-\eqref{eq:semi_discrete_timosh_2} for the errors associated with the approximate solutions $z_{1,k}$ and $z_{2,k}$, and proceed to evaluate the remainder terms. Following this, equations \eqref{eq:abst_timoshenko1} and \eqref{eq:abst_timoshenko2} are reformulated at the discrete time points $t = t_k$, where $k = 1,2,\ldots,n - 1$, as follows:
		\begin{subequations}
			\begin{align}\label{eq:main_thm_convrg_scheme_u}
				\frac{\Delta^2 u \brac{t_{k - 1}}}{\tau^2} &+ \brac{\alpha + \beta {\norm{A^{\nicefrac{1}{2}} u \brac{t_k}}}^2} \frac{A u \brac{t_{k + 1}} + A u \brac{t_{k - 1}}}{2}\nonumber \\
				&= \brac{f_1\brac{t_k} - a_1 B v\brac{t_k}} + R_{1,k}\brac{\tau} + R_{2,k}\brac{\tau}\,,
			\end{align}
			\begin{equation}\label{eq:main_thm_convrg_scheme_v}
				\frac{\Delta^2 v \brac{t_{k - 1}}}{\tau^2} + \frac{L v\brac{t_{k + 1}} + L v\brac{t_{k - 1}}}{2} = \brac{f_2\brac{t_k} - a_2 B u\brac{t_k}} + R_{3,k}\brac{\tau} + R_{4,k}\brac{\tau}\,.
			\end{equation}
		\end{subequations}
		Here,
		\begin{align*}
			R_{1,k}\brac{\tau} &= \frac{\Delta^2 u \brac{t_{k - 1}}}{\tau^2} - \frac{\d^2 u\brac{t_k}}{\d t^2}\,,\quad R_{2,k}\brac{\tau} = \frac{1}{2} \brac{\alpha + \beta {\norm{A^{\nicefrac{1}{2}} u \brac{t_k}}}^2} A\brac{\Delta^2 u \brac{t_{k - 1}}}\,, \\
			R_{3,k}\brac{\tau} &= \frac{\Delta^2 v \brac{t_{k - 1}}}{\tau^2} - \frac{\d^2 v\brac{t_k}}{\d t^2}\,,\quad R_{4,k}\brac{\tau} = \frac{1}{2} L\brac{\Delta^2 v \brac{t_{k - 1}}}\,.
		\end{align*}
		
		Based on conditions \ref{itm_theorem_b} and \ref{itm_theorem_c} of \thmref{prop:main_thm_convrg}, the following estimates can be derived for the remainder terms:
		\begin{equation}\label{eq:main_thm_convrg_est_rems}
			\norm{R_{\detectvar{boundremainders},k}\brac{\tau}} \leq \boundremainders \tau^2\,,\quad \detectvar{boundremainders} = 1,2,3,4\,.
		\end{equation}
		
		By subtracting equations \eqref{eq:main_thm_convrg_scheme_u} and \eqref{eq:main_thm_convrg_scheme_v} from equations \eqref{eq:semi_discrete_timosh_1} and \eqref{eq:semi_discrete_timosh_2}, respectively, we obtain the following system of equations:
		\begin{subequations}
			\begin{gather}
				\frac{\Delta^2 z_{1,k - 1}}{\tau^2} + \brac{\alpha + \beta {\norm{A^{\nicefrac{1}{2}} u_k}}^2} \frac{A z_{1,k + 1} + A z_{1,k - 1}}{2} = \widetilde{g}_k - a_1 B z_{2,k}\,,\label{eq:main_thm_convrg_error_eqt_z_one} \\
				\frac{\Delta^2 z_{2,k - 1}}{\tau^2} + \frac{L z_{2,k + 1} + L z_{2,k - 1}}{2} = R_k\brac{\tau} - a_2 B z_{1,k}\,,\label{eq:main_thm_convrg_error_eqt_z_two}
			\end{gather}
		\end{subequations}
		where
		\begin{gather*}
			\widetilde{g}_k = \beta \brac{{\norm{A^{\nicefrac{1}{2}} u_k}}^2 - {\norm{A^{\nicefrac{1}{2}} u \brac{t_k}}}^2} \frac{A u \brac{t_{k + 1}} + A u \brac{t_{k - 1}}}{2} + R_{1,k}\brac{\tau} + R_{2,k}\brac{\tau}\,, \\
			R_k\brac{\tau} = R_{3,k}\brac{\tau} + R_{4,k}\brac{\tau}\,.
		\end{gather*}
		
		Taking the inner product of both sides of equality \eqref{eq:main_thm_convrg_error_eqt_z_one} with $z_{1,k + 1} - z_{1,k - 1} = \Delta z_{1,k} + \Delta z_{1,k - 1}$, and considering the self-adjointness and positive definiteness of the operator $A$, we arrive at the following result
		\begin{equation}\label{eq:main_thm_convrg_inner_prod_eqt}
			\overline{\lambda}_{1,k + 1} = \overline{\lambda}_{1,k} + \brac{\overline{\varepsilon}_{1,k} + \eta_{1,k} + \eta_{2,k}}\,,
		\end{equation}
		where:
		\begin{gather*}
			\overline{\lambda}_{1,k} = \overline{\alpha}_{1,k}^{2} + \frac{1}{2}\brac{\alpha + \beta \gamma_{1,k - 1}} \overline{\gamma}_{1,k}^{2}\,,\quad \overline{\alpha}_{1,k} = \norm{\frac{\Delta z_{1,k - 1}}{\tau}}\,,\quad \overline{\gamma}_{1,k} = \norm{A^{\nicefrac{1}{2}} z_{1,k}}\,, \\
			\gamma_{1,k} = {\norm{A^{\nicefrac{1}{2}} u_k}}^{2}\,,\quad \overline{\varepsilon}_{1,k} = \frac{1}{2} \alpha \brac{\overline{\gamma}_{1,k - 1}^{2} - \overline{\gamma}_{1,k}^{2}} + \frac{1}{2} \beta \brac{\gamma_{1,k} \overline{\gamma}_{1,k - 1}^{2} - \gamma_{1,k - 1} \overline{\gamma}_{1,k}^{2}}\,, \\
			\eta_{1,k} = \brac{\widetilde{g}_k,\Delta z_{1,k} + \Delta z_{1,k - 1}}\,,\quad \eta_{2,k} = -a_1 \brac{B z_{2,k},\Delta z_{1,k} + \Delta z_{1,k - 1}}\,.
		\end{gather*}
		
		The resulting recurrence relation \eqref{eq:main_thm_convrg_inner_prod_eqt} provides a ``descent'' from $k$ to $1$. Therefore, it follows from \eqref{eq:main_thm_convrg_inner_prod_eqt} that
		\begin{equation}\label{eq:main_thm_convrg_sum_inner_prod}
			\begin{aligned}
				\overline{\lambda}_{1,k + 1} &= \overline{\lambda}_{1,1} + \sum_{i = 1}^{k} \brac{\overline{\varepsilon}_{1,i} + \eta_{1,i} + \eta_{2,i}} \\
				&= \overline{\lambda}_{1,1} + \frac{1}{2} \alpha \brac{\overline{\gamma}_{1,0}^{2} - \overline{\gamma}_{1,k}^{2}} + \frac{1}{2} \beta \sum_{i = 1}^{k} \brac{\gamma_{1,i} \overline{\gamma}_{1,i - 1}^{2} - \gamma_{1,i - 1} \overline{\gamma}_{1,i}^{2}} + \sum_{i = 1}^{k} \brac{\eta_{1,i} + \eta_{2,i}}\,.
			\end{aligned}
		\end{equation}
		We observe that for the following sum, the subsequent representation is satisfied
		\begin{equation}\label{eq:main_thm_convrg_sum_term_repr}
			\sum_{i = 1}^{k} \brac{\gamma_{1,i} \overline{\gamma}_{1,i - 1}^{2} - \gamma_{1,i - 1} \overline{\gamma}_{1,i}^{2}} = \gamma_{1,1} \overline{\gamma}_{1,0}^{2} + \sum_{i = 1}^{k - 1} \overline{\gamma}_{1,i}^{2}\brac{\gamma_{1,i + 1} - \gamma_{1,i - 1}} - \gamma_{1,k - 1} \overline{\gamma}_{1,k}^{2}\,.
		\end{equation}
		By substituting the corresponding term in the equality \eqref{eq:main_thm_convrg_sum_inner_prod} with the sum representation given in expression \eqref{eq:main_thm_convrg_sum_term_repr}, the following relation is established
		\begin{equation}\label{eq:main_thm_convrg_overl_lamb_eq}
			\begin{aligned}
				\overline{\lambda}_{1,k + 1} + \frac{1}{2} \brac{\alpha + \beta \gamma_{1,k - 1}} \overline{\gamma}_{1,k}^{2} &= \overline{\lambda}_{1,1} + \frac{1}{2} \brac{\alpha + \beta \gamma_{1,1}} \overline{\gamma}_{1,0}^{2} \\
				&+ \frac{1}{2} \beta \sum_{i = 1}^{k - 1} \overline{\gamma}_{1,i}^{2}\brac{\gamma_{1,i + 1} - \gamma_{1,i - 1}} + \sum_{i = 1}^{k} \brac{\eta_{1,i} + \eta_{2,i}}\,.
			\end{aligned}
		\end{equation}
		We shall introduce the following notation
		\begin{equation*}
			\overline{\delta}_{1,k} = \sqrt{\overline{\lambda}_{1,k} + \frac{1}{2} \brac{\alpha + \beta \gamma_{1,k - 2}} \overline{\gamma}_{1,k - 1}^{2}}\,.
		\end{equation*}
		Under this denotation, assuming $\gamma_{1,-1} = \gamma_{1,1}$, equality \eqref{eq:main_thm_convrg_overl_lamb_eq} can be expressed as follows
		\begin{equation}\label{eq:main_thm_convrg_tild_delta_one}
			\overline{\delta}_{1,k + 1}^{2} = \overline{\delta}_{1,1}^{2} + \frac{1}{2} \beta \sum_{i = 1}^{k - 1} \overline{\gamma}_{1,i}^{2}\brac{\gamma_{1,i + 1} - \gamma_{1,i - 1}} + \sum_{i = 1}^{k} \brac{\eta_{1,i} + \eta_{2,i}}\,.
		\end{equation}
		
		Following \lemref{prop:lemma1} and \lemref{prop:lemma2}, an estimate for the difference $\gamma_{1,i + 1} - \gamma_{1,i - 1}$ can be derived
		\begin{align}\label{eq:main_thm_convrg_abs_diff_gamma}
			\abs{\gamma_{1,i + 1} - \gamma_{1,i - 1}} &\leq \brac{\abs{\sqrt{\gamma_{1,i + 1}} - \sqrt{\gamma_{1,i}}} + \abs{\sqrt{\gamma_{1,i}} - \sqrt{\gamma_{1,i - 1}}}} \brac{\sqrt{\gamma_{1,i + 1}} + \sqrt{\gamma_{1,i - 1}}}\nonumber \\
			&\leq \tau \brac{\norm{A^{\nicefrac{1}{2}} \frac{\Delta u_{i}}{\tau}} + \norm{A^{\nicefrac{1}{2}} \frac{\Delta u_{i - 1}}{\tau}}} \brac{\norm{A^{\nicefrac{1}{2}} u_{i + 1}} + \norm{A^{\nicefrac{1}{2}} u_{i - 1}}} \leq 4 \cstmthree \cstmfive \tau\,.
		\end{align}
		On the other hand, it is easy to see that
		\begin{equation}\label{eq:main_thm_convrg_overl_gamma_est}
			\overline{\gamma}_{1,i}^{2} \leq \frac{2}{\alpha} \overline{\delta}_{1,i}^{2}\,.
		\end{equation}
		
		By employing the Cauchy-Schwarz inequality, we can ascertain that
		\begin{equation}\label{eq:main_thm_convrg_etaone}
			\abs{\eta_{1,i}} \leq \norm{\widetilde{g}_i} \brac{\norm{\Delta z_{1,i}} + \norm{\Delta z_{1,i - 1}}} = \tau \norm{\widetilde{g}_i} \brac{\overline{\alpha}_{1,i + 1} + \overline{\alpha}_{1,i}}\,.
		\end{equation}
		We shall proceed to estimate $\norm{\widetilde{g}_i}$. To achieve this, it is necessary to assess certain quantities. By virtue of inequality \eqref{eq:main_thm_convrg_est_rems}, we conclude that $\norm{R_{1,k}\brac{\tau}} + \norm{R_{2,k}\brac{\tau}} \leq \inthmsumonetwo \tau^2$. Taking into account this fact, together with estimate \eqref{eq:main_thm_convrg_overl_gamma_est}, \lemref{prop:lemma1}, and condition \ref{itm_theorem_c} of \thmref{prop:main_thm_convrg}, we establish that the following inequality is valid
		\begin{align}\label{eq:main_thm_convrg_norm_tild_g}
			\norm{\widetilde{g}_i} &\leq \frac{\beta}{2} \abs{{\norm{A^{\nicefrac{1}{2}} u_i}}^2 - {\norm{A^{\nicefrac{1}{2}} u \brac{t_i}}}^2} \brac{\norm{A u \brac{t_{i + 1}}} + \norm{A u \brac{t_{i - 1}}}} + \norm{R_{1,i}\brac{\tau}} + \norm{R_{2,i}\brac{\tau}}\nonumber \\
			&\leq \beta \overline{\gamma}_{1,i} \brac{\cstmthree + \inthmmaxhalfa} \inthmmaxa + \inthmsumonetwo \tau^2 \leq \beta \sqrt{2 {\alpha}^{-1}} \overline{\delta}_{1,i} \brac{\cstmthree + \inthmmaxhalfa} \inthmmaxa + \inthmsumonetwo \tau^2 = \inthmtilgfin \overline{\delta}_{1,i} + \inthmsumonetwo \tau^2\,,
		\end{align}
		where $\inthmmaxhalfa = \nu^{-\nicefrac{1}{2}} \inthmmaxa$ and $\inthmmaxa = \max_{0 \leq t \leq T} \norm{A u \brac{t}}$.
		
		Given that $\overline{\alpha}_{1,i}^{2} \leq \overline{\lambda}_{1,i} \leq \overline{\delta}_{1,i}^{2}$, and in view of the derived inequality \eqref{eq:main_thm_convrg_norm_tild_g}, the following estimate for \eqref{eq:main_thm_convrg_etaone} can be established
		\begin{equation}\label{eq:main_thm_convrg_abs_eta_one}
			\abs{\eta_{1,i}} \leq \tau \brac{\inthmtilgfin \overline{\delta}_{1,i} + \inthmsumonetwo \tau^2} \brac{\overline{\delta}_{1,i + 1} + \overline{\delta}_{1,i}}\,.
		\end{equation}
		By applying the Cauchy-Schwarz inequality along with the estimate \eqref{eq:remark6_final_res}, we easily find that
		\begin{align*}
			\abs{\eta_{2,i}} &\leq \abs{a_1} \norm{B z_{2,i}} \brac{\norm{\Delta z_{1,i}} + \norm{\Delta z_{1,i - 1}}} = \tau \abs{a_1} \norm{B z_{2,i}} \brac{\overline{\alpha}_{1,i + 1} + \overline{\alpha}_{1,i}} \\
			&\leq \tau \abs{a_1} \norm{B z_{2,i}} \brac{\overline{\delta}_{1,i + 1} + \overline{\delta}_{1,i}} \leq \tau \abs{a_1} \cstBoper \norm{A^{\nicefrac{1}{2}} z_{2,i}} \brac{\overline{\delta}_{1,i + 1} + \overline{\delta}_{1,i}}\,.
		\end{align*}
		Hence, from \remref{prop:remark2}, the following inequality follows:
		\begin{equation}\label{eq:main_thm_convrg_abs_eta_two}
			\abs{\eta_{2,i}} \leq \inthmnutwo \tau \overline{\gamma}_{2,i} \brac{\overline{\delta}_{1,i + 1} + \overline{\delta}_{1,i}}\,,\quad \overline{\gamma}_{2,i} = \norm{L^{\nicefrac{1}{2}} z_{2,i}}\,,\quad \inthmnutwo = \frac{\abs{a_1} \cstBoper}{\sqrt{\gamma}}\,.
		\end{equation}
		By incorporating \eqref{eq:main_thm_convrg_abs_diff_gamma}, \eqref{eq:main_thm_convrg_overl_gamma_est}, \eqref{eq:main_thm_convrg_abs_eta_one}, and \eqref{eq:main_thm_convrg_abs_eta_two} into expression \eqref{eq:main_thm_convrg_tild_delta_one}, we derive the following conclusion
		\begin{align*}
			\overline{\delta}_{1,k + 1}^{2} \leq \overline{\delta}_{1,1}^{2} &+ \inthmsumovrldlt \tau \sum_{i = 1}^{k - 1} \overline{\delta}_{1,i}^{2} \\
			&+ \tau \sum_{i = 1}^{k} \brac{\inthmtilgfin \overline{\delta}_{1,i} +\inthmnutwo \overline{\gamma}_{2,i} + \inthmsumonetwo \tau^2} \brac{\overline{\delta}_{1,i + 1} + \overline{\delta}_{1,i}}\,,\quad \inthmsumovrldlt = \frac{4 \cstmthree \cstmfive \beta}{\alpha}\,.
		\end{align*}
		By implementing the approach for the previously established inequality, as indicated in \citepbf{Theorem 2.1}{RogavaTsiklauri2014}, we arrive at the following result:
		\begin{align}\label{eq:main_thm_convrg_overl_deltone_final}
			\overline{\delta}_{1,k + 1} &\leq \overline{\delta}_{1,1} + \inthmsumovrldlt \tau \sum_{i = 1}^{k} \overline{\delta}_{1,i} + 2 \tau \sum_{i = 1}^{k} \brac{\inthmtilgfin \overline{\delta}_{1,i} +\inthmnutwo \overline{\gamma}_{2,i} + \inthmsumonetwo \tau^2} \nonumber \\
			&= \overline{\delta}_{1,1} + \brac{2 \inthmtilgfin + \inthmsumovrldlt} \tau \sum_{i = 1}^{k} \overline{\delta}_{1,i} + 2 \inthmnutwo \tau \sum_{i = 1}^{k} \overline{\gamma}_{2,i} + 2 \inthmsumonetwo t_k \tau^2\,.
		\end{align}
		
		Let us now move forward to derive an estimate of the type \eqref{eq:main_thm_convrg_overl_deltone_final} for the equation \eqref{eq:main_thm_convrg_error_eqt_z_two}. To do this, we adopt a similar approach as was used for the previous equation \eqref{eq:main_thm_convrg_error_eqt_z_one}. Specifically, we apply the inner product to both sides of equality \eqref{eq:main_thm_convrg_error_eqt_z_two} with $z_{2,k + 1} - z_{2,k - 1} = \Delta z_{2,k} + \Delta z_{2,k - 1}$, employing the properties of the operator $L = \gamma A + \delta C$, which is self-adjoint and positive definite. Furthermore, reasoning analogous to that used to establish the bound \eqref{eq:lemma1_final_ineq_dlt2} in \lemref{prop:lemma1} leads us to the following relation:
		\begin{equation}\label{eq:main_thm_convrg_overl_delt_z_two}
			\overline{\delta}_{2,k + 1} \leq \overline{\delta}_{2,1} + 2 \tau \sum_{i = 1}^{k} \norm{R_i\brac{\tau}} + 2 \abs{a_2} \tau \sum_{i = 1}^{k} \norm{B z_{1,i}}\,,
		\end{equation}
		for which
		\begin{equation*}
			\overline{\delta}_{2,k} = \sqrt{\overline{\lambda}_{2,k} + \frac{1}{2} \overline{\gamma}_{2,k - 1}^{2}}\,,\quad \overline{\lambda}_{2,k} = \overline{\alpha}_{2,k}^{2} + \frac{1}{2} \overline{\gamma}_{2,k}^{2}\,,\quad \overline{\alpha}_{2,k} = \norm{\frac{\Delta z_{2,k - 1}}{\tau}}\,,\quad \overline{\gamma}_{2,k} = \norm{L^{\nicefrac{1}{2}} z_{2,k}}\,.
		\end{equation*}
		
		It can be observed that by employing the estimates \eqref{eq:main_thm_convrg_est_rems} and \eqref{eq:main_thm_convrg_overl_gamma_est} alongside \remref{prop:remark6}, from inequality \eqref{eq:main_thm_convrg_overl_delt_z_two} we can immediately deduce the following result:
		\begin{equation}\label{eq:main_thm_convrg_overl_delttwo_final}
			\overline{\delta}_{2,k + 1} \leq \overline{\delta}_{2,1} + \inthmfrsumoverldlt \tau \sum_{i = 1}^{k} \overline{\delta}_{1,i}  + 2 \inthmsumthreefour t_k \tau^2\,,\quad \inthmfrsumoverldlt = \frac{2 \sqrt{2} \abs{a_2} \cstBoper}{\sqrt{\alpha}}\,.
		\end{equation}
		
		It is evident that a similar inequality holds for $\overline{\gamma}_{2,i}$, as given in \eqref{eq:main_thm_convrg_overl_gamma_est}, specifically $\overline{\gamma}_{2,i}^{2} \leq 2 \overline{\delta}_{2,i}^{2}$. Taking this inequality into consideration in the bound \eqref{eq:main_thm_convrg_overl_deltone_final}, we arrive at the following result:
		\begin{equation}\label{eq:main_thm_convrg_overl_deltone_latest}
			\overline{\delta}_{1,k + 1} \leq \overline{\delta}_{1,1} + \brac{2 \inthmtilgfin + \inthmsumovrldlt} \tau \sum_{i = 1}^{k} \overline{\delta}_{1,i} + 2 \sqrt{2} \inthmnutwo \tau \sum_{i = 1}^{k} \overline{\delta}_{2,i} + 2 \inthmsumonetwo t_k \tau^2\,.
		\end{equation}
		By summing inequalities \eqref{eq:main_thm_convrg_overl_delttwo_final} and \eqref{eq:main_thm_convrg_overl_deltone_latest} and introducing the notation $\overline{\delta}_{k} = \overline{\delta}_{1,k} + \overline{\delta}_{2,k}$, we can infer that:
		\begin{equation*}
			\overline{\delta}_{k + 1} \leq \overline{\delta}_{1} + \inthmsumovrlndlt \tau \sum_{i = 1}^{k} \overline{\delta}_{i} + \inthmassoctau t_k \tau^2\,,
		\end{equation*}
		in which
		\begin{equation*}
			\inthmsumovrlndlt = \max\brac{2 \inthmtilgfin + \inthmsumovrldlt + \inthmfrsumoverldlt, 2 \sqrt{2} \inthmnutwo}\, \text{and}\,\, \inthmassoctau = 2\brac{\inthmsumonetwo + \inthmsumthreefour}\,.
		\end{equation*}
		Thus, by employing the discrete Gr\"{o}nwall-type inequality ({\cf} \citepbf{Lemma 3.1}{RogavaVashakidze2024}), it can be concluded that:
		\begin{equation}\label{eq:main_thm_convrg_reslt_gronwl}
			\overline{\delta}_{k + 1} \leq {e}^{\inthmsumovrlndlt t_k} \brac{\overline{\delta}_{1} + \inthmassoctau t_k \tau^2} \leq {e}^{\inthmsumovrlndlt T} \brac{\overline{\delta}_{1} + \inthmassoctau T \tau^2}\,.
		\end{equation}
		
		In order to derive the desired estimates, it becomes essential to take an additional step. Specifically, this entails estimating the term $\overline{\delta}_{1}$ in inequality \eqref{eq:main_thm_convrg_reslt_gronwl}. On the other hand, the term $\overline{\delta}_{1}$ is expressed as the sum of $\overline{\delta}_{1,1}$ and $\overline{\delta}_{2,1}$. The first summand involves the quantities $\overline{\alpha}_{1,1}$, $\overline{\gamma}_{1,0}$, and $\overline{\gamma}_{1,1}$, whereas the second summand contains the quantities $\overline{\alpha}_{2,1}$, $\overline{\gamma}_{2,0}$, and $\overline{\gamma}_{2,1}$.
		
		In view of assumptions \ref{itm_theorem_a}–\ref{itm_theorem_c} of \thmref{prop:main_thm_convrg}, the following estimates hold:
		\begin{equation}\label{eq:main_thm_convrg_overln_alphbet}
			\overline{\alpha}_{1,1} \leq \inthmalphoneone \tau^2\,,\quad \overline{\gamma}_{1,1} \leq \inthmagammoneone \tau^2\,,\quad \overline{\alpha}_{2,1} \leq \inthmalphtwoone \tau^2\,,\quad \overline{\gamma}_{2,1} \leq \inthmagammtwoone \tau^2\,.
		\end{equation}
		
		By employing these estimates given in \eqref{eq:main_thm_convrg_overln_alphbet}, the following bound is established:
		\begin{equation}\label{eq:main_thm_convrg_overl_delta_one}
			\overline{\delta}_{1} = \overline{\delta}_{1,1} + \overline{\delta}_{2,1} \leq \inthmoverldltone \tau^2\,.
		\end{equation}
		
		From \eqref{eq:main_thm_convrg_reslt_gronwl}, and considering the bound provided in \eqref{eq:main_thm_convrg_overl_delta_one}, the estimates for \thmref{prop:main_thm_convrg} are derived.
	\end{proof}
	
	\begin{corollary}\label{prop:corol_follow_main_thm_convrg}
		The errors $z_{1,k} = u\brac{t_k} - u_k$ and $z_{2,k} = v\brac{t_k} - v_k$ associated with the approximate solutions to the problem defined by equations \eqref{eq:abst_timoshenko1}-\eqref{eq:abst_timoshenko3} are constrained by the following inequality:
		\begin{equation}\label{eq:corol_follow_main_thm_convrg_bound}
			\max_{1 \leq k \leq m} \norm{z_{\detectstaticarg,k}} \leq \cstremfollthm \tau^2\,,\quad \detectstaticarg = 1,2\,,
		\end{equation}
		where $m = \qbrac{\dfrac{\overline{T}}{\tau}}$ and $\overline{T}$ satisfies $0 < \overline{T} \leq T$.
	\end{corollary}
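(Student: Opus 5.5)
The corollary follows directly from \thmref{prop:main_thm_convrg} once the $\H$-norm is controlled by the $A^{\nicefrac{1}{2}}$-norm. I will use the positive definiteness of $A$, namely $A \geq \nu I$. This gives
\begin{equation*}
	{\norm{A^{\nicefrac{1}{2}} \varphi}}^2 \geq \nu {\norm{\varphi}}^2\,,\quad \varphi \in D\brac{A^{\nicefrac{1}{2}}}\,.
\end{equation*}
This is inequality \eqref{eq:remark2_result_post_def} in the proof of \remref{prop:remark2}. It is stated there for $\varphi \in D\brac{A}$ and extends to $D\brac{A^{\nicefrac{1}{2}}}$ because $D\brac{A}$ is a core of $A^{\nicefrac{1}{2}}$, the same fact used in \remref{prop:remark6}. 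Consequently $\norm{\varphi} \leq \nu^{-\nicefrac{1}{2}} \norm{A^{\nicefrac{1}{2}} \varphi}$.

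Next I will check that the errors lie in the right domain. By definition of a solution, $u\brac{t_k}, v\brac{t_k} \in D\brac{A}$. The discrete iterates also satisfy $u_k, v_k \in D\brac{A}$: $u_0, v_0$ do by assumption \ref{itm_theorem_a}, and for $k \geq 2$ they are images of resolvent-type operators ${\brac{I + 0.5 \tau^2 q_{k-1} A}}^{-1}$ and ${\brac{I + 0.5\tau^2 L}}^{-1}$, whose ranges lie in $D\brac{A}$. For $u_1, v_1$ the formulas \eqref{eq:initial_vect01}, \eqref{eq:initial_vect02} only give membership in $D\brac{A^{\nicefrac{1}{2}}}$, using \ref{itm_theorem_a}, and this is all that is needed. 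Hence $z_{j,k} \in D\brac{A^{\nicefrac{1}{2}}}$ for $j = 1,2$ and all $k$.

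Applying the inequality to $z_{j,k}$ and taking the maximum over $1 \leq k \leq m$, with the same $m = \qbrac{\overline{T}/\tau}$ and the same $\overline{T}$ as in \thmref{prop:main_thm_convrg}, yields
\begin{equation*}
	\max_{1 \leq k \leq m} \norm{z_{j,k}} \leq \nu^{-\nicefrac{1}{2}} \max_{1 \leq k \leq m} \norm{A^{\nicefrac{1}{2}} z_{j,k}} \leq \nu^{-\nicefrac{1}{2}} \cststatthmone \tau^2\,.
\end{equation*}
This is the claimed bound with $\cstremfollthm = \nu^{-\nicefrac{1}{2}} \cststatthmone$.

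An alternative route for $j = 2$ would go through $\norm{L^{\nicefrac{1}{2}} z_{2,k}}$ and \remref{prop:remark2}, but the direct route via $A$ suffices for both components. The only point needing care, rather than a real obstacle, is the domain justification for $u_1, v_1$ above; the rest is immediate.
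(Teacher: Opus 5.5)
Your proposal is correct and is the argument the paper intends: the corollary is stated without a separate proof, and it follows from \thmref{prop:main_thm_convrg} in one line via $\norm{z_{j,k}} \leq \nu^{-\nicefrac{1}{2}} \norm{A^{\nicefrac{1}{2}} z_{j,k}}$, which is a consequence of $A \geq \nu I$. Your domain paragraph is unnecessary, since the theorem's bound on $\norm{A^{\nicefrac{1}{2}} z_{j,k}}$ already presupposes $z_{j,k} \in D\brac{A^{\nicefrac{1}{2}}}$; moreover, its claim that $u_k \in D\brac{A}$ for every $k \geq 2$ is not justified by condition (a) alone, because the step that produces $u_3$ already involves $A u_1$, and (a) only gives $u_1 \in D\brac{A^{\nicefrac{1}{2}}}$.
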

	\begin{corollary}\label{prop:corol_error_deriv}
		For the approximate solution of system \eqref{eq:abst_timoshenko1}-\eqref{eq:abst_timoshenko3}, the following estimate holds for the error corresponding to the finite-difference approximation (more precisely, the second-order central finite difference) of the first-order derivative:
		\begin{equation}\label{eq:corol_error_deriv_ineq}
			\max_{1 \leq k \leq m} \norm{u^{\prime}\brac{t_k} - \frac{u_{k + 1} - u_{k - 1}}{2 \tau}} \leq \cstcorolderivatone \tau^2\,,\quad \max_{1 \leq k \leq m} \norm{v^{\prime}\brac{t_k} - \frac{v_{k + 1} - v_{k - 1}}{2 \tau}} \leq \cstcorolderivattwo \tau^2\,,
		\end{equation}
		where $m = \qbrac{\dfrac{\overline{T}}{\tau}}$ and $\overline{T}$ lies within the interval $0 < \overline{T} \leq T$.
	\end{corollary}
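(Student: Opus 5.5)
We prove Corollary \ref{prop:corol_error_deriv} by splitting the error with the triangle inequality into a pure Taylor (consistency) part and a discrete error part. The discrete error part will be controlled directly by the second estimate of \thmref{prop:main_thm_convrg}. For $j=1,2$ and $1\le k\le m$ we write
\begin{equation*}
	u^{\prime}\brac{t_k} - \frac{u_{k + 1} - u_{k - 1}}{2 \tau} = \brac{u^{\prime}\brac{t_k} - \frac{u\brac{t_{k + 1}} - u\brac{t_{k - 1}}}{2 \tau}} + \frac{z_{1,k + 1} - z_{1,k - 1}}{2\tau}.
\end{equation*}
The identity for $v$ is the same, with $z_{2,k}$ in place of $z_{1,k}$.

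\textbf{First term.} This is the truncation error of the central difference formula. By condition \ref{itm_theorem_b} of \thmref{prop:main_thm_convrg}, $u$ is three times continuously differentiable. Taylor's formula with integral remainder around $t_k$ therefore gives
\begin{equation*}
	\norm{u^{\prime}\brac{t_k} - \frac{u\brac{t_{k + 1}} - u\brac{t_{k - 1}}}{2 \tau}} \leq \frac{\tau^2}{6}\max_{0\le t\le T}\norm{u^{\prime\prime\prime}\brac{t}}.
\end{equation*}
The same bound holds for $v$. For vector-valued functions in $\H$ this is justified by applying the integral form of the remainder, or equivalently by testing against arbitrary unit vectors.

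\textbf{Second term.} We write
\begin{equation*}
	\frac{z_{1,k+1}-z_{1,k-1}}{2\tau}=\frac12\brac{\frac{\Delta z_{1,k}}{\tau}+\frac{\Delta z_{1,k-1}}{\tau}}.
\end{equation*}
The theorem bounds $\norm{\Delta z_{1,k-1}/\tau}\le \cststatthmtwo\tau^2$ for $1\le k\le m$, so both summands are $\bigO\brac{\tau^2}$.

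\textbf{Main obstacle.} The only delicate point is the index: when $k=m$, the term $\Delta z_{1,m}/\tau$ lies one step beyond the range covered by the theorem. We plan to handle this by taking the theorem with a slightly smaller $\overline{T}$, or equivalently by noting that the theorem's proof (inequality \eqref{eq:main_thm_convrg_reslt_gronwl}) actually bounds $\overline{\delta}_{k+1}$ for $k\le m$. That proof also only needs the local bounds of \lemref{prop:lemma2}, which stay valid at index $m+1$ after shrinking $q$ slightly in the definition of $\overline{T}$. With this, $\norm{\Delta z_{j,k}/\tau}\le \overline{\delta}_{k+1}\le C\tau^2$ for all $k\le m$.

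Adding the two bounds gives \eqref{eq:corol_error_deriv_ineq}, with $\cstcorolderivatone=\frac16\max\norm{u^{\prime\prime\prime}}+C$ and the analogous constant for $v$.
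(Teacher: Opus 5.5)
Your argument is correct and is the route the paper intends. The paper states this corollary without a separate proof, as an immediate consequence of \thmref{prop:main_thm_convrg}, via exactly your splitting: the central-difference truncation error, bounded by $\tfrac{\tau^2}{6}\max\norm{u'''}$ under condition (b), plus $\tfrac12\brac{\Delta z_{j,k}/\tau+\Delta z_{j,k-1}/\tau}$.

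The boundary index $k=m$, where $\Delta z_{j,m}/\tau$ and $u_{m+1}$ fall just outside the theorem's range, is a detail the paper passes over, and you were right to flag it. It is settled cleanly by proving the corollary with $\overline{T}/2$ in place of $\overline{T}$. If $\tau\le\overline{T}/2$, then $[\overline{T}/(2\tau)]+1\le[\overline{T}/\tau]$, so every index needed is covered by the theorem. If $\tau>\overline{T}/2$, then $[\overline{T}/(2\tau)]=0$ and the maximum is over an empty range.
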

	\begin{remark}\label{prop:rem_main_thm_holder}
		Suppose that, in \thmref{prop:main_thm_convrg}, the functions $u^{\prime\prime\prime}\brac{t}$, $v^{\prime\prime\prime}\brac{t}$, $A u^{\prime}\brac{t}$, and $A v^{\prime}\brac{t}$ satisfy a H\"{o}lder condition with exponent $\lambda$ $\brac{0 < \lambda \leq 1}$. Then, the following estimates are valid:
		\begin{equation*}
			\max_{1 \leq k \leq m} \norm{A^{\nicefrac{1}{2}} z_{\detectstaticarg,k}} \leq \cstremholderone \tau^{1 + \lambda}\,,\quad \max_{1 \leq k \leq m} \norm{\frac{\Delta z_{\detectstaticarg,k - 1}}{\tau}} \leq \cstremholdertwo \tau^{1 + \lambda}\,,\quad \detectstaticarg = 1,2\,.
		\end{equation*}
	\end{remark}
	\begin{remark}
		The developed approach in this work allows us to extend the results obtained for problems \eqref{eq:abst_timoshenko1}-\eqref{eq:abst_timoshenko3} to the following modified system:
		\begin{subequations}
			\begin{gather*}
				\frac{\d^2 u\brac{t}}{\d t^2} + \brac{\alpha + \beta {\norm{A_1^{\nicefrac{1}{2}} u}}^2} A_1 u\brac{t} + a_1 B_2 v\brac{t} = f_1\brac{t}\,, \\
				\frac{\d^2 v\brac{t}}{\d t^2} + \gamma A_2 v\brac{t} + \delta C v\brac{t} + a_2 B_1 u\brac{t} = f_2\brac{t}\,.
			\end{gather*}
		\end{subequations}
		Here, $A_1$ and $A_2$ are self-adjoint, positive definite operators, while $B_1$ and $B_2$ are closed linear operators satisfying the subordination conditions $D\brac{A_i} \subset D\brac{B_{3 - i}}$, $i = 1,2$, and
		\begin{equation*}
			{\norm{B_{3 - i} \varphi}}^2 \leq b_i^2 \brac{A_i \varphi,\varphi}\,,
		\end{equation*}
		for all $\varphi \in D\brac{A_i}$ and for some constants $b_i > 0$. Furthermore, it is assumed that the intersection of the domains $D\brac{A_1} \cap D\brac{A_2}$ is dense in the Hilbert space $\H$.
	\end{remark}
	
	\section{Spatial One-Dimensional Nonlinear Dynamic Timoshenko Model}\label{sec:timoshenko_specific}
	Observe that the system \eqref{eq:abst_timoshenko1}-\eqref{eq:abst_timoshenko2} is an abstract analogue of the spatial one-dimensional nonlinear dynamic Timoshenko model, which takes the following form:
	\begin{gather}
		\frac{\partial^2 u\brac{x,t}}{\partial t^2} - \brac{\alpha + \beta \int\limits_{0}^{\ell} \qbrac{\frac{\partial u\brac{x,t}}{\partial x}}^2 \d x} \frac{\partial^2 u\brac{x,t}}{\partial x^2} + a_1 \frac{\partial v\brac{x,t}}{\partial x} = f_1\brac{x,t}\,,\label{eq:timosh_spec_nonlinear} \\
		\frac{\partial^2 v\brac{x,t}}{\partial t^2} - \gamma \frac{\partial^2 v\brac{x,t}}{\partial x^2} + \delta v\brac{x,t} - a_2 \frac{\partial u\brac{x,t}}{\partial x} = f_2\brac{x,t}\,.\label{eq:timosh_spec_linear}
	\end{gather}
	In this setting, $\brac{x,t} \in \brac{0,\ell} \times \left( 0,T \right]$; the constants $\alpha$, $\beta$, $\gamma$, $\delta$, $a_1$, and $a_2$ are positive; the functions $f_1\brac{x,t}$ and $f_2\brac{x,t}$ are continuous over the prescribed domain.
	
	For the system \eqref{eq:timosh_spec_nonlinear}-\eqref{eq:timosh_spec_linear}, we pose the following initial-boundary value problem:
	\begin{gather}
		u\brac{x,0} = \varphi_0\brac{x}\,,\quad u_{t}^{\prime}\brac{x,0} = \varphi_1\brac{x}\,,\quad v\brac{x,0} = \psi_0\brac{x}\,,\quad v_{t}^{\prime}\brac{x,0} = \psi_1\brac{x}\,,\label{eq:timosh_spec_init_conds} \\
		u\brac{0,t} = 0\,,\quad u\brac{\ell,t} = 0\,,\quad v\brac{0,t} = 0\,,\quad v\brac{\ell,t} = 0\,.\label{eq:timosh_spec_bound_conds}
	\end{gather}
	Here, the functions $\varphi_0\brac{x}$, $\varphi_1\brac{x}$, $\psi_0\brac{x}$, and $\psi_1\brac{x}$ are continuous. Moreover, compatibility conditions are assumed to hold for $\varphi_0\brac{x}$ and $\psi_0\brac{x}$, specifically that $\varphi_0\brac{0} = \varphi_0\brac{\ell} = 0$ and $\psi_0\brac{0} = \psi_0\brac{\ell} = 0$; the functions $u\brac{x,t}$ and $v\brac{x,t}$ are unknown.
	
	\subsection{Semi-Discrete Scheme}\label{subsec:semidiscrete}
	We shall establish a uniform grid for the time domain $\qbrac{0,T}$ with a step size of $\tau$, defined as follows:
	\begin{equation*}
		0 < t_0 < t_1 < \cdots < t_{n - 1} < t_n = T\,,\quad t_k = k \tau\,,\quad k = 0,1,\ldots,n\,,\quad \tau = \frac{T}{n}\,.
	\end{equation*}
	Let us write a semi-discrete scheme for the problem \eqref{eq:timosh_spec_nonlinear}-\eqref{eq:timosh_spec_bound_conds} based on the proposed scheme \eqref{eq:semi_discrete_timosh_1}-\eqref{eq:semi_discrete_timosh_2} for the abstract coupled system. It has the following form:
	\begin{subequations}
		\begin{gather}
			\begin{aligned}\label{eq:tim_spec_semidiscr_u}
				\frac{{\Delta}^2 u_{k - 1}\brac{x}}{\tau^2} - \frac{1}{2} q_k \brac{\frac{\d^2 u_{k + 1}\brac{x}}{\d x^2} + \frac{\d^2 u_{k - 1}\brac{x}}{\d x^2}} = f_{1,k}\brac{x} - a_1 \frac{\d v_k\brac{x}}{\d x}\,,
			\end{aligned}
		\end{gather}
		\begin{align}\label{eq:tim_spec_semidiscr_v}
			\frac{{\Delta}^2 v_{k - 1}\brac{x}}{\tau^2} - \frac{1}{2} \gamma \brac{\frac{\d^2 v_{k + 1}\brac{x}}{\d x^2} + \frac{\d^2 v_{k - 1}\brac{x}}{\d x^2}} + \frac{1}{2} \delta \brac{v_{k + 1}\brac{x} + v_{k - 1}\brac{x}}\nonumber \\
			= f_{2,k}\brac{x} + a_2 \frac{\d u_k\brac{x}}{\d x}\,,
		\end{align}
	\end{subequations}
	where $k = 1,2,\ldots,n - 1$, $f_1\brac{x,t_k} = f_{1,k}\brac{x}$, and $f_2\brac{x,t_k} = f_{2,k}\brac{x}$.
	
	The nonlinear term in equation \eqref{eq:tim_spec_semidiscr_u} is evaluated at the middle node point and denoted by $q_k$, specifically:
	\begin{equation}\label{eq:tim_spec_q_k_nonlin_term}
		q_k = \alpha + \beta \int\limits_{0}^{\ell} \brac{\frac{\d u_k \brac{x}}{\d x}}^2 \d x\,.
	\end{equation}
	
	For each discrete time layer, the boundary conditions prescribed in \eqref{eq:timosh_spec_bound_conds} are rewritten as follows:
	\begin{equation}\label{eq:tim_spec_bound_conds}
		u_{k + 1}\brac{0} = 0\,,\quad u_{k + 1}\brac{\ell} = 0\,,\quad v_{k + 1}\brac{0} = 0\,,\quad v_{k + 1}\brac{\ell} = 0\,.
	\end{equation}
	
	The values of the unknown functions at the zeroth and first temporal layers are determined by the initial conditions specified in \eqref{eq:timosh_spec_init_conds} and the system given by \eqref{eq:timosh_spec_nonlinear}-\eqref{eq:timosh_spec_linear} as follows:
	\begin{gather*}
		u_0\brac{x} = \varphi_0\brac{x}\,, \\
		u_1\brac{x} = \varphi_0\brac{x} + \tau \varphi_1\brac{x} + \frac{\tau^2}{2} \varphi_2\brac{x}\,,\quad \varphi_2\brac{x} = f_{1,0}\brac{x} - a_1 \frac{\d \psi_0\brac{x}}{\d x} + q_0 \frac{\d^2 \varphi_0\brac{x}}{\d x^2}\,,
	\end{gather*}
	and
	\begin{gather*}
		v_0\brac{x} = \psi_0\brac{x}\,, \\
		v_1\brac{x} = \psi_0\brac{x} + \tau \psi_1\brac{x} + \frac{\tau^2}{2} \psi_2\brac{x}\,,\quad \psi_2\brac{x} = f_{2,0}\brac{x} + a_2 \frac{\d \varphi_0\brac{x}}{\d x} + \gamma \frac{\d^2 \psi_0\brac{x}}{\d x^2} - \delta \psi_0\brac{x}\,.
	\end{gather*}
	
	In the subsequent discussion, let $u_k\brac{x}$ and $v_k\brac{x}$ denote the solutions to the system of differential-difference equations \eqref{eq:tim_spec_semidiscr_u}-\eqref{eq:tim_spec_semidiscr_v}, subject to the boundary conditions \eqref{eq:tim_spec_bound_conds}. Consequently, these solutions are declared as approximate values of the exact solution $u\brac{x,t}$ and $v\brac{x,t}$ of the problem \eqref{eq:timosh_spec_nonlinear}-\eqref{eq:timosh_spec_bound_conds} at the discrete time instants $t = t_k$, respectively, so that $u\brac{x,t_k} \approx u_k\brac{x}$ and $v\brac{x,t_k} \approx v_k\brac{x}$.
	
	Let us now define the notation for the first-order and second-order differential operators as follows:
	\begin{gather}
		{\B}_0 = \frac{\d}{\d x}\,,\quad D\brac{{\B}_0} = C^1\brac{\qbrac{0,\ell}}\,,\label{eq:tim_spec_first_order_deriv} \\
		{\A}_0 = - \frac{\d^2}{\d x^2}\,,\quad D\brac{{\A}_0} = \left\{ u\brac{x} \in C^2\brac{\qbrac{0,\ell}} \mid u\brac{0} = u\brac{\ell} = 0 \right\}\,.\label{eq:tim_spec_second_order_deriv}
	\end{gather}
	Employing the notations defined in \eqref{eq:tim_spec_first_order_deriv} and \eqref{eq:tim_spec_second_order_deriv}, the system \eqref{eq:tim_spec_semidiscr_u}-\eqref{eq:tim_spec_semidiscr_v} can be reformulated as follows:
	\begin{subequations}
		\begin{gather}
			\begin{aligned}\label{eq:tim_spec_semidiscr_oper_u}
				\frac{{\Delta}^2 u_{k - 1}\brac{x}}{\tau^2} + q_k \frac{{\A}_0 u_{k + 1} + {\A}_0 u_{k - 1}}{2} = f_{1,k}\brac{x} - a_1 {\B}_0 v_k\brac{x}\,,
			\end{aligned}
		\end{gather}
		\begin{gather}
			\begin{aligned}\label{eq:tim_spec_semidiscr_oper_v}
				\frac{{\Delta}^2 v_{k - 1}\brac{x}}{\tau^2} + \frac{{\L}_0 v_{k + 1} + {\L}_0 v_{k - 1}}{2} = f_{2,k}\brac{x} + a_2 {\B}_0 u_k\brac{x}\,,
			\end{aligned}
		\end{gather}
	\end{subequations}
	for $k = 1,2,\ldots,n - 1$, and ${\L}_0 = \gamma {\A}_0 + \delta {\I}$, where ${\I}$ represents the identity operator.
	
	It follows easily from the application of integration by parts that the quantity $q_k$ can be represented in the subsequent form, using the notation introduced in \eqref{eq:tim_spec_second_order_deriv}, specifically:
	\begin{equation*}
		q_k = \alpha + \beta \brac{{\A}_0 u_k,u_k}\,.
	\end{equation*}
	In the following discussions, the notation $\brac{\cdot,\cdot}$ is used to denote the inner product in the space $L^2\brac{0,\ell}$, whereas the corresponding norm is represented by $\norm{\cdot}$.
	
	Since ${\A}_{0}$ is a symmetric and positive definite operator (see, {\eg}, \textbf{Chap. 8} in Rektorys \citep{Rektorys1980}), it admits an extension to a self-adjoint and positive definite operator ({\cf} \textbf{Chap. V}, \textbf{Sec. 4} in Maurin \citep{Moren1965}). We denote this operator by $\A$ (${\A}_{0} \subset {\A}$). It is evident that the operator ${\B}_{0}$ is closable and can be extended to a closed operator. The closure of ${\B}_{0}$ is denoted by $\B$ (${\B}_{0} \subset {\B}$). We shall rewrite equations \eqref{eq:tim_spec_semidiscr_oper_u} and \eqref{eq:tim_spec_semidiscr_oper_v} in terms of the operators $\A$ and $\B$ as follows:
	\begin{subequations}
		\begin{gather}
			\begin{aligned}\label{eq:tim_spec_semidiscr_extend_oper_u}
				\frac{{\Delta}^2 u_{k - 1}\brac{x}}{\tau^2} + q_k \frac{{\A} u_{k + 1} + {\A} u_{k - 1}}{2} = f_{1,k}\brac{x} - a_1 {\B} v_k\brac{x}\,,
			\end{aligned}
		\end{gather}
		\begin{gather}
			\begin{aligned}\label{eq:tim_spec_semidiscr_extend_oper_v}
				\frac{{\Delta}^2 v_{k - 1}\brac{x}}{\tau^2} + \frac{{\L} v_{k + 1} + {\L} v_{k - 1}}{2} = f_{2,k}\brac{x} + a_2 {\B} u_k\brac{x}\,,\quad {\L} = \gamma {\A} + \delta {\I}\,,
			\end{aligned}
		\end{gather}
	\end{subequations}
	where
	\begin{equation*}
		q_k = \alpha + \beta {\norm{{\A}^{\nicefrac{1}{2}} u_k}}^2\,.
	\end{equation*}
	
	Our objective is to extend a theorem analogous to \thmref{prop:main_thm_convrg} to the scheme \eqref{eq:tim_spec_semidiscr_extend_oper_u}-\eqref{eq:tim_spec_semidiscr_extend_oper_v}. To achieve this, it requires demonstrating that inequalities analogous to \eqref{eq:B_cond} and \eqref{eq:remark4_inner_prod_ABphi} are retained for the operators $\A$ and $\B$.
	
	\begin{remark}\label{prop:rmk_spec_semidiscrete}
		The following relation is satisfied:
		\begin{equation}\label{eq:rmk_spec_semidiscrete_main_bound}
			{\norm{{\B} u}}^2 = \brac{{\A} u,u}\,,\quad \forall u \in D\brac{{\A}} \subset D\brac{{\B}}\,.
		\end{equation}
	\end{remark}
	\begin{proof}
		It is easy to obtain
		\begin{equation*}
			{\norm{{\B}_{0} u}}^2 = \int\limits_{0}^{\ell} {\qbrac{u^{\prime}\brac{x}}}^2 \d x = - \int\limits_{0}^{\ell} u^{\prime\prime}\brac{x} u\brac{x} \d x = \brac{{\A}_{0} u,u}\,,\quad \forall u \in D\brac{{\A}_{0}} \subset D\brac{{\B}_{0}}\,.
		\end{equation*}
		Hence, it follows that
		\begin{equation}\label{eq:rmk_spec_semidiscrete_square_norm}
			{\norm{{\B} u}}^2 = \brac{{\A} u,u}\,,\quad \forall u \in D\brac{{\A}_{0}} \subset D\brac{{\B}_{0}}\,.
		\end{equation}
		Define ${\A} u = v$. In this notation, \eqref{eq:rmk_spec_semidiscrete_square_norm} takes the following form:
		\begin{equation}\label{eq:rmk_spec_semidiscrete_inv_oper}
			{\norm{{\B} {\A}^{-1} v}}^2 = \brac{v,{\A}^{-1} v}\,,\quad \forall v \in {R}_{0} = {\A} D\brac{{\A}_{0}}\,.
		\end{equation}
		
		It is evident that $R_0$ is dense in $L^2$. Indeed, $R_0$ represents the set of functions $f$ for which the differential equation $-u^{\prime \prime}\brac{x} = f\brac{x}$, subject to homogeneous boundary conditions, admits a unique classical solution. It is well established that, for such a simple differential equation, the solution $u$ lies in $D\brac{{\A}_0}$ whenever $f \in C\brac{\qbrac{0,\ell}}$ ({\cf} \textbf{Chap. II}, \textbf{Sec. 10} in book by Fu\v{c}\'{\i}k and Kufner \citep{FuchikKufner1980}). Moreover, the density of $C\brac{\qbrac{0,\ell}}$ in $L^2\brac{0,\ell}$ is well-known.
		
		The relation \eqref{eq:rmk_spec_semidiscrete_inv_oper} may now be extended over the entire space $L^2$. Given that $R_0$ is dense in $L^2$, for each $v \in L^2$, there exists a sequence $v_n \in R_0$ such that $v_n \to v$. Accordingly, by \eqref{eq:rmk_spec_semidiscrete_inv_oper}, for the sequence $v_n$ we have:
		\begin{equation}\label{eq:rmk_spec_semidiscrete_inv_oper_vn}
			{\norm{{\B} {\A}^{-1} v_n}}^2 = \brac{v_n,{\A}^{-1} v_n}\,.
		\end{equation}
		Applying the Cauchy–Schwarz inequality to \eqref{eq:rmk_spec_semidiscrete_inv_oper_vn}, we obtain:
		\begin{equation*}
			{\norm{{\B} {\A}^{-1} v_n}}^2 \leq \norm{v_n} \norm{{\A}^{-1} v_n} \leq \norm{{\A}^{-1}} {\norm{v_n}}^2\,.
		\end{equation*}
		From this, it follows that ${\B} {\A}^{-1} v_n$ forms the Cauchy sequence. Taking into account that ${\B}$ is a closed operator and that ${\A}^{-1} v_n \to {\A}^{-1} v$, we conclude that ${\A}^{-1} v \in D\brac{\B}$ and that ${\B} {\A}^{-1} v_n \to {\B} {\A}^{-1} v$. Considering this fact, it follows from \eqref{eq:rmk_spec_semidiscrete_inv_oper_vn} that:
		\begin{equation*}
			{\norm{{\B} {\A}^{-1} v}}^2 = \brac{v,{\A}^{-1} v}\,,\quad \forall v \in L^2\,,
		\end{equation*}
		or, which is the same
		\begin{equation*}
			{\norm{{\B} u}}^2 = \brac{{\A} u,u}\,,\quad \forall u \in D\brac{{\A}}\,.
		\end{equation*}
	\end{proof}
	\begin{remark}\label{prop:rmk_operat_ab}
		Suppose that the set $D_0 \subset D\brac{{\A}_{0}}$ is such that the operator ${\B}_{0}$ maps $D_0$ into $D\brac{{\A}_{0}}$, that is, ${\B}_{0} : D_0 \to D\brac{{\A}_{0}}$. Moreover, the sets $D_0$ and $B_0 D_0$ are dense in $L^2$. Then, the following relation holds:
		\begin{equation}\label{eq:rmk_operat_ab_main}
			\brac{\A \B u, \B u} = {\norm{\A u}}^2\,,\quad \forall u \in D_0 \subset D\brac{\A}\,.
		\end{equation}
	\end{remark}
	Indeed, by an application of integration by parts, we obtain:
	\begin{equation*}
		\brac{{\A}_0 {\B}_0 u, {\B}_0 u} = -\int_{0}^{\ell} u^{\prime \prime \prime} \brac{x} u^{\prime} \brac{x} \d x = \int_{0}^{\ell} {\qbrac{u^{\prime \prime} \brac{x}}}^2 \d x = {\norm{{\A}_0 u}}^2\,,\quad \forall u \in D_0\,.
	\end{equation*}
	Thus, it is evident that \eqref{eq:rmk_operat_ab_main} follows since the operators $\A$ and $\B$ are extensions of ${\A}_0$ and ${\B}_0$, respectively.
	
	Due to \remref{prop:remark4}, the following relation follows from \remref{prop:rmk_operat_ab}:
	\begin{equation*}
		\norm{{\A}^{\nicefrac{1}{2}} {\B} u} = \norm{{\A} u}\,,\quad \forall u \in D\brac{{\A}}\,.
	\end{equation*}
	
	Based on \thmref{prop:main_thm_convrg}, we now formulate a theorem concerning the convergence of the scheme \eqref{eq:tim_spec_semidiscr_extend_oper_u}-\eqref{eq:tim_spec_semidiscr_extend_oper_v}.
	
	\begin{theorem}\label{prop:main_thm_specific_timosh}
		Let the initial-boundary value problem \eqref{eq:timosh_spec_nonlinear}-\eqref{eq:timosh_spec_bound_conds} be well-posed. Furthermore, the following conditions are satisfied:
		\begin{enumerate}[label=(\alph*)]
			\item\label{itm_main_thm_specific_timosh_a} Let $\varphi_0\brac{x}, \psi_0\brac{x} \in D\brac{{\A}_{0}}$; $\varphi_1\brac{x}, \psi_1\brac{x}, \varphi_2\brac{x}$, and $\psi_2\brac{x} \in C^{1} \brac{\qbrac{0,\ell}}$. The functions $u_x \brac{x,t}$ and $v_x \brac{x,t}$ possess a second-order continuous derivative with respect to the temporal variable. Furthermore, the right-hand sides of equations \eqref{eq:timosh_spec_nonlinear} and \eqref{eq:timosh_spec_linear}, namely $f_1\brac{x,t}$ and $f_2\brac{x,t}$, are continuous functions that vanish at the endpoints of the interval $\qbrac{0,\ell}$. In addition, $f_1\brac{x,t}$ and $f_2\brac{x,t}$ are continuously differentiable with respect to the spatial variable.
			\item\label{itm_main_thm_specific_timosh_b} The solutions $u\brac{x,t}$ and $v\brac{x,t}$ to the initial-boundary value problem \eqref{eq:timosh_spec_nonlinear}-\eqref{eq:timosh_spec_bound_conds} are continuously differentiable functions up to and including the third order with respect to the temporal variable. Furthermore, the third derivatives $u^{\prime \prime \prime}\brac{x,t}$ and $v^{\prime \prime \prime}\brac{x,t}$ are Lipschitz continuous with respect to the temporal variable.
			\item\label{itm_main_thm_specific_timosh_c} The functions $u_{xx}\brac{x,t}$ and $v_{xx}\brac{x,t}$ are continuously differentiable with respect to the temporal variable. Moreover, the mixed partial derivatives $u_{xxt}\brac{x,t}$ and $v_{xxt}\brac{x,t}$ satisfy the Lipschitz condition with respect to the temporal variable.
		\end{enumerate}
		Then, there exists $\overline{T}$ such that $0 < \overline{T} \leq T$ and the following estimates hold for the errors of the approximate solutions $z_{1,k}\brac{x} = u\brac{x,t_k} - u_k\brac{x}$ and $z_{2,k}\brac{x} = v\brac{x,t_k} - v_k\brac{x}$:
		\begin{equation*}
			\max_{1 \leq k \leq m} \norm{\frac{\d z_{\detectstaticarg,k}}{\d x}} \leq \cstthmconvspecopone \tau^2\,,\quad \max_{1 \leq k \leq m} \norm{\frac{\Delta z_{\detectstaticarg,k - 1}}{\tau}} \leq \cstthmconvspecoptwo \tau^2\,,\quad \detectstaticarg = 1,2\,,
		\end{equation*}
		where $m = \qbrac{\dfrac{\overline{T}}{\tau}}$, and $\Delta z_{\detectstaticarg,k}\brac{x} = z_{\detectstaticarg,k + 1}\brac{x} - z_{\detectstaticarg,k}\brac{x}$.
	\end{theorem}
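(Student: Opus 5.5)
The plan is to obtain Theorem~\ref{prop:main_thm_specific_timosh} as a direct specialization of \thmref{prop:main_thm_convrg}. We take $\H = L^2\brac{0,\ell}$, $A = \A$, $C = \I$ (symmetric, bounded, $\brac{C\varphi,\varphi} \geq 0$) and $B = -\B$. The sign of $a_2$ can be absorbed: equation \eqref{eq:tim_spec_semidiscr_extend_oper_v} carries $+a_2 \B u_k$, which is the abstract form with coefficient $-a_2$. The constants $a_1,a_2$ are unrestricted in sign in the abstract setting, so this causes no difficulty.

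We first verify the structural hypotheses of \secref-free abstract theory in order.
\begin{enumerate*}[label=(\roman*)]
\item $\A$ is self-adjoint and positive definite. It is the extension of $\A_0$, with $\nu = \pi^2/\ell^2$ by the Poincar\'e/Friedrichs inequality.
\item $\B$ is closed and satisfies condition \eqref{eq:B_cond} with $\cstBoper = 1$. This follows from \remref{prop:rmk_spec_semidiscrete}, which gives even equality $\norm{\B u}^2 = \brac{\A u,u}$.
\item The hypothesis of \remref{prop:remark4} holds with $\cstrmkthree = 1$. This follows from \remref{prop:rmk_operat_ab}, taking for $D_0$, for instance, the span of $\sin\brac{k\pi x/\ell}$. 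This set lies in $D\brac{\A_0}$, and $\B_0$ maps it into the cosines. Here there is a subtlety: the cosines do not vanish at the endpoints, so I would instead choose $D_0$ as smooth functions with $u,u''$ vanishing at $0,\ell$ and $u'$ compactly behaved, or functions supported away from the boundary ($C_0^\infty\brac{0,\ell}$). For these, $\B_0 D_0 \subset C_0^\infty \subset D\brac{\A_0}$, and both $D_0$ and $\A D_0 = \brac{C_0^\infty}''$ are dense in $L^2$. The density of $\A D_0$ needs a short argument: an $L^2$ function orthogonal to all second derivatives of test functions is linear, and linear functions are orthogonal to $\A D_0$, which must be handled. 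This is the step I expect to be the main obstacle. I would resolve it by enlarging $D_0$ to include functions like odd reflections, e.g.\ $D_0 = \{u \in C^\infty: u^{(2j)}\brac{0}=u^{(2j)}\brac{\ell}=0\}$, whose derivatives $u'$ satisfy $u'''=0$ at the endpoints but not $u'=0$. If this fails, I would fall back on checking the estimate $\norm{\A^{\nicefrac12}\B u} \leq \norm{\A u}$ directly on $D\brac{\A}$ via sine expansions. In that approach $\B \sin = $ cosine, and $\norm{\A^{\nicefrac12} w} = \norm{w'}$ is used for $w \in H^1$ in the form-domain sense.
\end{enumerate*}

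Next we translate the regularity assumptions (a)--(c) of Theorem~\ref{prop:main_thm_specific_timosh} into (a)--(c) of \thmref{prop:main_thm_convrg}.
\begin{enumerate*}[label=(\roman*)]
\item $\varphi_0,\psi_0 \in D\brac{\A_0} \subset D\brac{\A}$.
\item $C^1$ functions vanishing at the endpoints lie in $D\brac{\A^{\nicefrac12}} = H^1_0$. For $\varphi_2,\psi_2$ the boundary values vanish by compatibility, using $f_{j}$ vanishing at the endpoints together with the equations. Note that $\B\psi_0$ need not vanish, so one must check $\varphi_2 \in H_0^1$ via $\varphi_2 = u_{tt}\brac{\cdot,0}$, which vanishes at the boundary.
\item $f_j\brac{t} \in H^1_0$, with $\A^{\nicefrac12} f_j$ continuous in $t$, because $\norm{\A^{\nicefrac12} f} = \norm{f_x}$.
\item Conditions (b) and (c) transfer verbatim, since $\A u = -u_{xx}$ for the classical solution and Lipschitz continuity in $t$ pointwise uniformly implies it in $L^2$.
\end{enumerate*}

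Then \thmref{prop:main_thm_convrg} yields bounds on $\norm{\A^{\nicefrac12} z_{j,k}}$ and $\norm{\Delta z_{j,k-1}/\tau}$. Finally, since $z_{j,k} \in D\brac{\A}$, \remref{prop:rmk_spec_semidiscrete} gives $\norm{\A^{\nicefrac12} z_{j,k}}^2 = \brac{\A z_{j,k},z_{j,k}} = \norm{\B z_{j,k}}^2 = \norm{\d z_{j,k}/\d x}^2$. This converts the first estimate into the stated one, with $\overline{T}$ and the constants inherited from \lemref{prop:lemma2} and \thmref{prop:main_thm_convrg}.
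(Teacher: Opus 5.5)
Your route is the paper's route. The paper gives no argument for this theorem beyond three ingredients: \remref{prop:rmk_spec_semidiscrete}, which gives \eqref{eq:B_cond} with constant $1$; \remref{prop:rmk_operat_ab} combined with \remref{prop:remark4}, which is asserted to give $\norm{\A^{\nicefrac{1}{2}}\B u}=\norm{\A u}$ on $D\brac{\A}$; and the specialization of \thmref{prop:main_thm_convrg}. Several parts of your proposal are fine:
\begin{itemize}
\item your items (i) and (ii);
\item the transfer of (a)--(c), including the useful observation that $\varphi_2=u_{tt}\brac{\cdot,0}$ vanishes at the endpoints even though $\psi_0'$ and $\varphi_0''$ need not;
\item the final identity $\norm{\A^{\nicefrac{1}{2}}z}=\norm{z'}$.
\end{itemize}
The gap is your step (iii), and neither of your repairs closes it. Your enlarged $D_0$ (even-order derivatives vanishing at $0,\ell$) gives up $\B_0 D_0\subset D\brac{\A_0}$, because $u'$ need not vanish at the endpoints. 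So \eqref{eq:remark4_inner_prod_ABphi} is not even defined there, and \remref{prop:remark4} does not apply. Your fallback uses $\norm{\A^{\nicefrac{1}{2}}w}=\norm{w'}$ for $w\in H^1$. But $D\brac{\A^{\nicefrac{1}{2}}}=H^1_0\brac{0,\ell}$, the form domain of the Dirichlet operator, and the identity holds only there. The cosines produced by $\B$ do not lie in it.

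The obstruction is structural, not a missing density trick. Suppose $D_1\subset D\brac{\A}$ and $\B D_1\subset D\brac{\A}$. Then every $u\in D_1$ has $u=u'=0$ at both endpoints, so $\brac{\A u,1}=-\qbrac{u'}_0^\ell=0$ and $\brac{\A u,x}=-\qbrac{xu'}_0^\ell+\qbrac{u}_0^\ell=0$. Hence $\A D_1\perp\mathrm{span}\{1,x\}$ for every admissible $D_1$, as you yourself noticed for $C_0^\infty$, and the density hypothesis of \remref{prop:remark4} can never hold.

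Worse, the target \eqref{eq:remark4_norm_halfA_B} is false here. Take $u=\sin\brac{\pi x/\ell}\in D\brac{\A}$; then $\B u=\brac{\pi/\ell}\cos\brac{\pi x/\ell}\notin H^1_0=D\brac{\A^{\nicefrac{1}{2}}}$. Equivalently, the standing assumption in the proof of \lemref{prop:lemma2}, that $f_{1,k}-a_1\B v_k\in D\brac{\A^{\nicefrac{1}{2}}}$, requires $v_k'\brac{0}=v_k'\brac{\ell}=0$. The scheme does not enforce this, and it fails generically, including for the benchmark solutions.

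The paper's own step has the same defect. \remref{prop:rmk_operat_ab} assumes density of $\B_0D_0$, whereas \remref{prop:remark4} needs density of $\A D_0$. So \thmref{prop:main_thm_convrg} cannot be specialized verbatim. What is missing is an independent proof of the local bounds of \lemref{prop:lemma2} for the Dirichlet--Dirichlet system, one that never places $\A^{\nicefrac{1}{2}}$ on $\B v_k$.

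One plausible way to supply it:
\begin{itemize}
\item Sum the coupling term $-a_1\brac{\B v_k,\A\brac{u_{k+1}-u_{k-1}}}$ by parts in $k$. This produces $\brac{\B\brac{v_{k-1}-v_{k+1}},\A u_k}$ plus endpoint terms that can be absorbed by Young's inequality.
\item Couple this with the higher-order energy identity for $v$, tested with $\A\brac{v_{k+1}-v_{k-1}}$. Its coupling term sums by parts into quantities controlled by $\norm{\A^{\nicefrac{1}{2}}\Delta u_k/\tau}$ and $\norm{\A v_k}$.
\end{itemize}
Only \eqref{eq:B_cond}, which does hold, is then used. The error analysis proper relies on \remref{prop:remark6} and the bound on $\norm{\A^{\nicefrac{1}{2}}\Delta u_{k-1}/\tau}$, so it would carry over once these bounds are established.
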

	
	\subsection{Legendre–Galerkin Spectral Method}\label{subsec:galerkin}
	
	It should be observed that solving the system \eqref{eq:tim_spec_semidiscr_u}-\eqref{eq:tim_spec_semidiscr_v} reduces to solving the following intermediate system:
	\begin{subequations}
		\begin{align}
			w_{1,k} \brac{x} - \frac{\tau^2}{2} q_k \frac{\d^2 w_{1,k}}{\d x^2} &= \tau^2 f_{1,k}\brac{x} + 2 u_k\brac{x} - a_1 \tau^2 \frac{\d v_k}{\d x}\,,\label{eq:tim_spec_expanded_eq_u} \\
			\brac{1 + \frac{\tau^2}{2} \delta} w_{2,k} \brac{x} - \frac{\tau^2}{2} \gamma \frac{\d^2 w_{2,k}}{\d x^2} &= \tau^2 f_{2,k}\brac{x} + 2 v_k\brac{x} + a_2 \tau^2 \frac{\d u_k}{\d x}\,,\label{eq:tim_spec_expanded_eq_v} \\
			w_{1,k} \brac{0} = w_{1,k} \brac{\ell} &= 0\,,\quad w_{2,k} \brac{0} = w_{2,k} \brac{\ell} = 0\,,\label{eq:tim_spec_expanded_bound_cond}
		\end{align}
	\end{subequations}
	for which $k = 1,2,\ldots,n - 1$.
	
	By solving the system \eqref{eq:tim_spec_expanded_eq_u}-\eqref{eq:tim_spec_expanded_eq_v}, the unknowns $u_k\brac{x}$ and $v_k\brac{x}$ are determined by the following formulas:
	\begin{equation}\label{eq:tim_spec_inv_operat_ref_uv}
		u_{k + 1}\brac{x} = w_{1,k} \brac{x} - u_{k - 1}\brac{x}\,,\quad v_{k + 1}\brac{x} = w_{2,k} \brac{x} - v_{k - 1}\brac{x}\,.
	\end{equation}
	
	Before presenting the method in detail, we shall first introduce the shifted Legendre polynomials. These polynomials are obtained by an affine transformation that involves ``shifting'' and ``scaling'' the argument of the standard Legendre polynomials, mapping the interval $\qbrac{0,\ell}$ onto $\qbrac{-1,1}$. Specifically, the shifted Legendre polynomial of degree $m$ is defined as
	\begin{equation*}
		\widetilde{P}_m \brac{x} = P_m \brac{\frac{2}{\ell} x - 1}\,,\quad x \in \qbrac{0,\ell}\,,
	\end{equation*}
	where $P_m \brac{x}$ represents the $m$-th standard Legendre polynomial. In the following discussion, the shifted Legendre polynomial of degree $m$ is denoted by $\widetilde{P}_m \brac{x}$.
	
	Recall that the shifted Legendre polynomials retain an orthogonality property, similar to their unshifted counterparts, but adjusted to the interval $\qbrac{0,\ell}$, yielding a dependence on the interval length $\ell$, namely:
	\begin{equation}\label{eq:galerkin_orthog_prop}
		\brac{\widetilde{P}_i,\widetilde{P}_m} = \ell A_i A_m \delta_{im}\,,\quad A_m = \frac{1}{\sqrt{2m + 1}}\,,
	\end{equation}
	where $\delta_{im}$ denotes the Kronecker delta, taking the value $1$ when $i = m$ and $0$ otherwise.
	
	In order to approximate the solutions $w_{1,k} \brac{x}$ and $w_{2,k} \brac{x}$ of the system \eqref{eq:tim_spec_expanded_eq_u}-\eqref{eq:tim_spec_expanded_bound_cond} at each temporal layer, we seek them in the form of the following linear combinations:
	\begin{equation}\label{eq:galerkin_intermed_syst_ansatzes}
		{w}_{k,N}^{\brac{j}} \brac{x} = \sum_{m = 1}^{N} {w}_{j,m}^{k} \phi_m \brac{x}\,,\quad j = 1,2\,.
	\end{equation}
	Here, we choose the differences of the shifted Legendre polynomials as \ansatz{} (trial) functions:
	\begin{equation*}
		\phi_m \brac{x} = \frac{1}{A_m \sqrt{\ell}} \int\limits_{0}^{x} \widetilde{P}_m \brac{s} \d s = \frac{\sqrt{\ell}}{2} A_m \brac{\widetilde{P}_{m + 1} \brac{x} - \widetilde{P}_{m - 1} \brac{x}}\,.
	\end{equation*}
	
	Observe that the functions $\phi_m \brac{x}$ inherently vanish at the endpoints of the interval $\qbrac{0,\ell}$. Furthermore, it is obvious that:
	\begin{equation*}
		\phi_{m}^{\prime} \brac{x} = \widehat{P}_m \brac{x}\,,\quad \widehat{P}_m \brac{x} = \frac{1}{A_m \sqrt{\ell}} \widetilde{P}_m \brac{x}\,.
	\end{equation*}
	Evidently, $\widehat{P}_m \brac{x}$ denotes the orthonormal shifted Legendre polynomial on the interval $\qbrac{0,\ell}$.
	
	To approximate the solutions $u_k\brac{x}$ and $v_k\brac{x}$ of the system \eqref{eq:tim_spec_semidiscr_u}-\eqref{eq:tim_spec_semidiscr_v} at each temporal layer, we employ the same linear combination as in \eqref{eq:galerkin_intermed_syst_ansatzes}, that is,
	\begin{equation}\label{eq:galerkin_ansatzes}
		\widetilde{u}_{k,N} \brac{x} = \sum_{m = 1}^{N} \tilde{u}_{m}^{k} \phi_m \brac{x}\,,\quad \widetilde{v}_{k,N} \brac{x} = \sum_{m = 1}^{N} \tilde{v}_{m}^{k} \phi_m \brac{x}\,,\quad k = 2,3,\ldots,n\,.
	\end{equation}
	
	Substituting the Galerkin approximations defined by \eqref{eq:galerkin_intermed_syst_ansatzes} and \eqref{eq:galerkin_ansatzes} into the relations provided in \eqref{eq:tim_spec_inv_operat_ref_uv}, we derive the following expressions for the expansion coefficients:
	\begin{equation*}
		\tilde{u}_{m}^{k + 1} = {w}_{1,m}^{k} - \tilde{u}_{m}^{k - 1} \quad \text{and} \quad \tilde{v}_{m}^{k + 1} = {w}_{2,m}^{k} - \tilde{v}_{m}^{k - 1}\,.
	\end{equation*}
	
	Let the test functions be chosen to coincide with the trial functions $\phi_m \brac{x}$. The derivation of the Galerkin system involves substituting the Galerkin approximations from \eqref{eq:galerkin_intermed_syst_ansatzes} into equations \eqref{eq:tim_spec_expanded_eq_u} and \eqref{eq:tim_spec_expanded_eq_v}, followed by taking the inner product of both sides of the resulting equations with the test functions $\phi_m \brac{x}$ for $m = 1,2,\ldots,N$. Finally, the resulting Galerkin subsystems can be reformulated in the following matrix-vector form:
	\begin{subequations}
		\begin{align}
			\vm{\T}_{N}^{k} \vm{w}_{1}^{k} &= \frac{4 \tau^2}{\ell^2} \vm{I}_{k}^{\brac{1}} + 2 \vm{\H}_N \vm{\tilde{u}}^k - \frac{2 a_1 \tau^2}{\ell} \vm{\B}_N \vm{\tilde{v}}^k\,,\label{eq:galerkin_system_matrx_form_u} \\
			\vm{\T}_{N} \vm{w}_{2}^{k} &= \frac{2 a_0 \tau^2}{\ell^2} \vm{I}_{k}^{\brac{2}} + a_0 \vm{\H}_N \vm{\tilde{v}}^k + \frac{a_0 a_2 \tau^2}{\ell} \vm{\B}_N \vm{\tilde{u}}^k\,,\quad a_0 = \frac{4}{2 + \delta \tau^2}\,.\label{eq:galerkin_system_matrx_form_v}
		\end{align}
	\end{subequations}
	
	The coefficient matrices corresponding to the subsystems \eqref{eq:galerkin_system_matrx_form_u} and \eqref{eq:galerkin_system_matrx_form_v} are defined by
	\begin{equation*}
		\vm{\T}_{N}^{k} = \vm{\H}_N + \frac{2 \tau^2}{\ell^2} q_{k,N} \vm{\I}_N \quad \text{and} \quad \vm{\T}_{N} = \vm{\H}_N + \frac{a_0 \tau^2}{\ell^2} \gamma \vm{\I}_N\,,
	\end{equation*}
	for which
	\begin{equation*}
		q_{k,N} = \alpha + \beta \sum_{m = 1}^{N} \brac{\tilde{u}_{m}^{k}}^2\,.
	\end{equation*}
	
	The matrix $\vm{\H}_N = \brac{h_{i,j}}_{1 \leq i,j \leq N}$ is symmetric and sparse. Its entries are defined as follows: the main diagonal entries are given by $h_{i,i} = 2 A_{i - 1}^{2} A_{i + 1}^{2}$, for $i = 1,2,\ldots,N$. The nonzero entries on the second sub- and super-diagonals are $h_{i,i + 2} = h_{i + 2,i} = - A_{i} A_{i + 1}^{2} A_{i + 2}$, for $i = 1,2,\ldots,N - 2$. All other entries of this matrix are zero. The matrix $\vm{\B}_N = \brac{b_{i,j}}_{1 \leq i,j \leq N}$ is sparse and skew-symmetric, with nonzero entries confined to the first sub- and super-diagonals. Its off-diagonal entries satisfy $b_{i,i + 1} = A_i A_{i + 1}$ and $b_{i + 1,i} = -A_i A_{i + 1}$, for $i = 1,2,\ldots,N - 1$, while all other entries, including those on the main diagonal, are equal to zero. Moreover, $\vm{\I}_N$ denotes the identity matrix of order $N$.
	
	The column vectors $\vm{\tilde{u}}^k$, $\vm{\tilde{v}}^k$, $\vm{w}_{1}^{k}$, and $\vm{w}_{2}^{k}$, appearing in the subsystems of the Galerkin linear equations \eqref{eq:galerkin_system_matrx_form_u} and \eqref{eq:galerkin_system_matrx_form_v}, consist of the expansion coefficients given in \eqref{eq:galerkin_ansatzes} and \eqref{eq:galerkin_intermed_syst_ansatzes}, respectively.
	
	For each $j = 1,2$, the vector
	\begin{equation*}
		\vm{I}_{k}^{\brac{j}} = \tps{\brac{I_{k,1}^{\brac{j}},I_{k,2}^{\brac{j}},\ldots,I_{k,N}^{\brac{j}}}}
	\end{equation*}
	is defined by $I_{k,m}^{\brac{j}} = \brac{f_{j,k},\phi_m}$. Thus, the components $I_{k,m}^{\brac{j}}$ are given by the inner products of the functions $f_{j,k}\brac{x}$ with the basis functions $\phi_m\brac{x}$.
	
	\begin{theorem}\label{prop:thm_galerkin_positive_def_syst}
		The coefficient matrices $\vm{\T}_{N}^{k}$ and $\vm{\T}_{N}$ arising from the subsystems \eqref{eq:galerkin_system_matrx_form_u} and \eqref{eq:galerkin_system_matrx_form_v} are positive definite.
	\end{theorem}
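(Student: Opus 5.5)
Both coefficient matrices have the form $\vm{\H}_N + c\,\vm{\I}_N$ with a scalar $c>0$: namely $c = 2\tau^2 q_{k,N}/\ell^2$ with $q_{k,N}=\alpha+\beta\sum_m (\tilde u_m^k)^2 \ge \alpha>0$, and $c = a_0\tau^2\gamma/\ell^2$ with $a_0 = 4/(2+\delta\tau^2)>0$. The plan is to show that $\vm{\H}_N$ is itself symmetric positive definite. Adding a positive multiple of the identity then preserves positive definiteness, since $\tps{\vm{\xi}}(\vm{\H}_N + c\vm{\I}_N)\vm{\xi} = \tps{\vm{\xi}}\vm{\H}_N\vm{\xi} + c\norm{\vm{\xi}}^2>0$ for $\vm{\xi}\neq 0$.

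To prove that $\vm{\H}_N$ is positive definite, I would identify it as a scaled Gram (mass) matrix of the trial functions. The Galerkin derivation pairs the zeroth-order terms $w_{j,k}$ of \eqref{eq:tim_spec_expanded_eq_u}-\eqref{eq:tim_spec_expanded_eq_v} with $\phi_m$. Because the second-derivative term is integrated by parts using $\phi_m(0)=\phi_m(\ell)=0$ and $\phi_m' = \widehat{P}_m$ is orthonormal, that term produces the identity matrix. So $\vm{\H}_N$ must be proportional to $\brac{(\phi_j,\phi_i)}_{i,j}$. I would check this directly: $\phi_m = \frac{\sqrt{\ell}}{2}A_m(\widetilde P_{m+1}-\widetilde P_{m-1})$, together with the orthogonality relation \eqref{eq:galerkin_orthog_prop}, gives
\begin{equation*}
(\phi_i,\phi_i) = \frac{\ell^2}{4}A_i^2\brac{A_{i+1}^2+A_{i-1}^2},\qquad (\phi_i,\phi_{i+2}) = -\frac{\ell^2}{4}A_iA_{i+2}A_{i+1}^2 ,
\end{equation*}
and all other entries vanish. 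I would then compare this with the stated entries $h_{i,i}=2A_{i-1}^2A_{i+1}^2$ and $h_{i,i+2}=-A_iA_{i+1}^2A_{i+2}$. The identity $A_i^2(A_{i+1}^2+A_{i-1}^2) = \frac{4i+2}{(2i+1)(2i+3)(2i-1)} = 2A_{i-1}^2A_{i+1}^2$ confirms that $\vm{\H}_N = \frac{4}{\ell^2}\brac{(\phi_j,\phi_i)}_{i,j}$.

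Given this identification, for $\vm{\xi}\in\real^N$ we get $\tps{\vm{\xi}}\vm{\H}_N\vm{\xi} = \frac{4}{\ell^2}\norm{\sum_m \xi_m\phi_m}^2 \ge 0$. Equality forces $\sum_m\xi_m\phi_m\equiv 0$, hence its derivative $\sum_m \xi_m \widehat P_m \equiv 0$, and orthonormality of the $\widehat P_m$ gives $\vm{\xi}=0$. Thus $\vm{\H}_N$ is positive definite, and both $\vm{\T}_N^k$ and $\vm{\T}_N$ are as well, being symmetric with smallest eigenvalue at least $c>0$.

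The main obstacle is the bookkeeping in the identification step: confirming the exact scaling constant and that the index conventions (including $A_0=1$ at $i=1$) match the stated entries of $\vm{\H}_N$. If the constants turn out not to match exactly, I would fall back on an argument that avoids the precise scaling. Either show directly that $\vm{\H}_N$ is a Gram matrix up to a positive diagonal scaling, or split it into its odd- and even-indexed tridiagonal blocks and verify strict diagonal dominance. The second route works because $A_{i-1}^2A_{i+1}^2$ dominates $\frac12(A_{i-2}A_{i-1}^2A_i + A_iA_{i+1}^2A_{i+2})$, and a symmetric strictly diagonally dominant matrix with positive diagonal is positive definite by Gershgorin.
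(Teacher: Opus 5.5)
Your proof is correct, but it works through a different mechanism from the paper's.

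\textbf{How the paper argues.} The paper never looks at the entries of $\vm{\H}_N$. It derives the theorem from \lemref{prop:lem_galerkin_positive_def_gen} applied to $\T_0 = a\I + b\A_0$, whose Galerkin matrix is a positive multiple of $\vm{\T}_N^k$ (resp.\ $\vm{\T}_N$). Since $(\A_0\phi_i,\phi_m)=\delta_{im}$, one gets $(\T_0 u_N,u_N) = b\norm{u_N'}^2 + a\norm{u_N}^2 \ge b\sum_m c_m^2$. So positivity comes entirely from the stiffness block, which is the identity. The mass term is simply discarded as nonnegative.

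\textbf{How you argue.} You prove $\vm{\H}_N = \frac{4}{\ell^2}\bigl((\phi_j,\phi_i)\bigr)_{i,j}$ explicitly and get positive definiteness of the mass part from linear independence. Your computations check out, including the identity $A_i^2(A_{i-1}^2+A_{i+1}^2) = 2A_{i-1}^2A_{i+1}^2$ and the boundary case $A_0=1$.

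\textbf{What each route buys.} Your route confirms something the paper only asserts: that the stated entries of $\vm{\H}_N$ are consistent with the Galerkin derivation. The paper's route needs no entry-level computation. It applies verbatim to any symmetric operator satisfying \eqref{eq:galerkin_lemma_main_condition} with a $B$-orthogonal basis, for example the variable-coefficient equation \eqref{eq:ritz_gal_rem_gen_eq}. Your linear-independence step is also more than you need. Positive semidefiniteness of the Gram matrix together with $c>0$ already suffices, and that is exactly the paper's logic written in matrix form.

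\textbf{Drop the fallback.} The diagonal-dominance claim is false. Put $s=2i+1$. Then $h_{i,i} = 2/(s^2-4)$, while
\[
\abs{h_{i-2,i}}+\abs{h_{i,i+2}} = \frac{1}{(s-2)\sqrt{s(s-4)}}+\frac{1}{(s+2)\sqrt{s(s+4)}} > \frac{1}{(s-2)^2}+\frac{1}{(s+2)^2} = \frac{2s^2+8}{(s^2-4)^2} > \frac{2}{s^2-4}.
\]
Hence every row $3\le i\le N-2$ of the odd/even blocks violates even weak dominance. For $i=3$, for instance, the diagonal entry is $2/45\approx 0.0444$ against an off-diagonal sum of about $0.0563$.

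Adding $c\,\vm{\I}_N$ with $c$ proportional to $\tau^2$ does not repair this for small $\tau$, so Gershgorin cannot certify the result. The Gram-matrix argument is the right tool here. Since your primary identification of $\vm{\H}_N$ holds exactly, the proof itself is unaffected.
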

	
	This \hyperref[prop:thm_galerkin_positive_def_syst]{Theorem} follows from the subsequent \hyperref[prop:lem_galerkin_positive_def_gen]{Lemma}.
	
	\begin{lemma}\label{prop:lem_galerkin_positive_def_gen}
		Consider a general operator equation in a Hilbert space $\H$,
		\begin{equation}\label{eq:galerkin_lemma_pos_def}
			Au = f\,,\quad f \in \H\,,
		\end{equation}
		where the operator $A$ is symmetric and satisfies the condition:
		\begin{equation}\label{eq:galerkin_lemma_main_condition}
			\brac{Au,u}_{\H} \geq \alpha \brac{Bu,u}_{\H} + \nu \norm{u}_{\H}^2\,,\quad \forall u \in D\brac{A} \subset D\brac{B}\,.
		\end{equation}
		The operator $B$ is also symmetric, and $D\brac{A} \subset D\brac{B}$, with $\alpha$ and $\nu$ being positive constants.
		
		Let the basis functions $\left\{ \phi_m \right\}_{m = 1}^{\infty}$ be $B$-orthogonal, which is understood in the following sense:
		\begin{equation}\label{eq:galerkin_lemma_b_orthog_condition}
			\brac{B \phi_i,\phi_m}_{\H} = b_m \delta_{im}\,,\quad b_m > 0\,,
		\end{equation}
		where $\delta_{im}$ denotes the Kronecker delta.
		
		The coefficient matrix $\vm{\S}_N = \brac{\brac{A \phi_i,\phi_m}_{\H}}_{1 \leq i,m \leq N}$, corresponding to the system of Galerkin linear equations derived using the Galerkin spectral method for equation \eqref{eq:galerkin_lemma_pos_def}, is positive definite.
	\end{lemma}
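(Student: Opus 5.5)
The plan is the standard quadratic-form argument for Gram-type matrices. Take an arbitrary nonzero real vector $\vm{c} = \tps{\brac{c_1,\ldots,c_N}}$ and form $u_N = \sum_{m = 1}^{N} c_m \phi_m$. Here the basis functions are implicitly assumed to lie in $D\brac{A}$; otherwise the entries $\brac{A\phi_i,\phi_m}_{\H}$ would not be defined. Since $D\brac{A}$ is a linear space, $u_N \in D\brac{A}$. By bilinearity of the inner product,
\begin{equation*}
	\tps{\vm{c}} \vm{\S}_N \vm{c} = \sum_{i,m = 1}^{N} c_i c_m \brac{A \phi_i,\phi_m}_{\H} = \brac{A u_N,u_N}_{\H}\,.
\end{equation*}
The matrix $\vm{\S}_N$ is symmetric because $A$ is symmetric: $\brac{A\phi_i,\phi_m}_{\H} = \brac{\phi_i,A\phi_m}_{\H} = \brac{A\phi_m,\phi_i}_{\H}$, using that the space is real. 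It therefore suffices to show that this quadratic form is strictly positive.

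Next, apply condition \eqref{eq:galerkin_lemma_main_condition} to $u_N$:
\begin{equation*}
	\tps{\vm{c}} \vm{\S}_N \vm{c} \geq \alpha \brac{B u_N,u_N}_{\H} + \nu \norm{u_N}_{\H}^2\,.
\end{equation*}
Expand the $B$-term using the $B$-orthogonality \eqref{eq:galerkin_lemma_b_orthog_condition}. This gives $\brac{B u_N,u_N}_{\H} = \sum_{m = 1}^{N} b_m c_m^2$. Because every $b_m > 0$ and $\vm{c} \neq 0$, this sum is strictly positive; in fact it is at least $\min_{1 \leq m \leq N} b_m \, \abs{\vm{c}}^2$. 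Since $\nu \norm{u_N}_{\H}^2 \geq 0$, we obtain $\tps{\vm{c}} \vm{\S}_N \vm{c} \geq \alpha \min_m b_m \abs{\vm{c}}^2 > 0$. This is positive definiteness, with an explicit lower bound on the smallest eigenvalue.

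There is no real obstacle; the one delicate point is where the strict positivity comes from. One could instead argue through the $\nu$-term, but that requires linear independence of the $\phi_m$ to guarantee $u_N \neq 0$. The $B$-orthogonality with $b_m > 0$ already forces linear independence, and it gives strictness directly, so I would rely on the $B$-term. For the application to \thmref{prop:thm_galerkin_positive_def_syst}, the Galerkin operator is $A = I + \tfrac{\tau^2}{2} q_k \A$ (respectively $\brac{1 + \tfrac{\tau^2}{2}\delta} I + \tfrac{\tau^2}{2}\gamma \A$), and $B = \A$. Then $\brac{\A\phi_i,\phi_m} = \brac{\phi_i^{\prime},\phi_m^{\prime}} = \brac{\widehat{P}_i,\widehat{P}_m} = \delta_{im}$, which verifies the hypotheses with $b_m = 1$.
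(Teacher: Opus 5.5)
Your proof is correct and matches the paper's argument. The paper likewise writes the quadratic form of $\vm{\S}_N$ as $(Au_N,u_N)_{\H}$ and applies the coercivity condition. It then drops the $\nu$-term and uses $B$-orthogonality to bound the form below by $\alpha\min_{m} b_m$ times the squared Euclidean norm of the coefficient vector. Your extra remarks are valid refinements that the paper leaves unstated: the requirement $\phi_m\in D(A)$, and the symmetry of $\vm{\S}_N$ in a real space.
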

	\begin{proof}
		Let us introduce the vector:
		\begin{equation*}
			\vm{v} = \tps{\brac{c_1,c_2,\ldots,c_N}}\,.
		\end{equation*}
		Here, $c_1,c_2,\ldots,c_N$ are the coefficients of the Galerkin approximation, corresponding to the basis functions $\phi_1,\phi_2,\ldots,\phi_N$. This vector constitutes the solution to the Galerkin system of linear equations within the finite-dimensional subspace spanned by the chosen $N$ basis functions.
		
		It follows directly that:
		\begin{equation*}
			\vm{\S}_N \vm{v} = \tps{\brac{\brac{A u_N,\phi_1}_{\H},\brac{A u_N,\phi_2}_{\H},\ldots,\brac{A u_N,\phi_N}_{\H}}}\,,
		\end{equation*}
		where
		\begin{equation}\label{eq:galerkin_lemma_gal_approx}
			u_N = \sum_{i = 1}^{N} c_i \phi_i\,.
		\end{equation}
		In fact, we have:
		\begin{equation}\label{eq:galerkin_lemma_inner_prod_oper}
			\brac{A u_N,\phi_m}_{\H} = \sum_{i = 1}^{N} c_i \brac{A \phi_i,\phi_m}_{\H}\,,\quad m = 1,2,\ldots,N\,.
		\end{equation}
		As a result of \eqref{eq:galerkin_lemma_inner_prod_oper}, we obtain:
		\begin{equation}\label{eq:galerkin_lemma_equality_inn_prod}
			\brac{\vm{\S}_N \vm{v},\vm{v}} = \sum_{i = 1}^{N} c_i \brac{A u_N,\phi_i}_{\H} = \brac{A u_N,\sum_{i = 1}^{N} c_i \phi_i}_{\H} = \brac{A u_N,u_N}_{\H}\,.
		\end{equation}
		
		By combining relations \eqref{eq:galerkin_lemma_main_condition} and \eqref{eq:galerkin_lemma_equality_inn_prod}, it follows that:
		\begin{equation}\label{eq:galerkin_lemma_finitedim_condition}
			\brac{\vm{\S}_N \vm{v},\vm{v}} \geq \alpha \brac{B u_N,u_N}_{\H} + \nu \norm{u_N}_{\H}^2\,.
		\end{equation}
		Substituting \eqref{eq:galerkin_lemma_gal_approx} into inequality \eqref{eq:galerkin_lemma_finitedim_condition} and leveraging the $B$-orthogonality property \eqref{eq:galerkin_lemma_b_orthog_condition}, we arrive at:
		\begin{align*}
			\brac{\vm{\S}_N \vm{v},\vm{v}} &\geq \alpha \brac{\sum_{i = 1}^{N} c_i B \phi_i,\sum_{m = 1}^{N} c_m \phi_m}_{\H} + \nu \norm{u_N}_{\H}^2 \\
			&\geq \alpha \sum_{i = 1}^{N} \sum_{m = 1}^{N} c_i c_m \brac{B \phi_i,\phi_m}_{\H} = \alpha \sum_{m = 1}^{N} b_m c_m^2 \geq \widehat{\alpha} \norm{\vm{v}}_{2}^2\,,\quad \widehat{\alpha} = \alpha \min_{1 \leq m \leq N} b_m\,.
		\end{align*}
		Finally, from the inequality stated above, it follows that the matrix $\vm{\S}_N$ is positive definite.
	\end{proof}
	
	\begin{remark}\label{prop:lemma_leading_oper}
		The operators appearing on the left-hand sides of each equation in the system \eqref{eq:tim_spec_expanded_eq_u}-\eqref{eq:tim_spec_expanded_eq_v}, subject to the boundary conditions \eqref{eq:tim_spec_expanded_bound_cond}, have the following form:
		\begin{equation*}
			{\T}_0 = a \I + b {\A}_0\,,\quad D\brac{{\T}_0} = D\brac{{\A}_0} = \left\{ u\brac{x} \in C^2\brac{\qbrac{0,\ell}} \mid u\brac{0} = u\brac{\ell} = 0 \right\}\,,
		\end{equation*}
		where $a$ and $b$ are positive constants.
		
		For the operator ${\T}_0$, the condition \eqref{eq:galerkin_lemma_main_condition} is interpreted as follows:
		\begin{equation}\label{eq:galerkin_rem_spec_condition}
			\brac{{\T}_0 u,u} = b \brac{{\A}_0 u,u} + a \norm{u}^2\,,
		\end{equation}
		and, as defined, ${\A}_0 = - {\d^2} / {\d x^2}$, it is well-known that this operator is positive definite ({\cf} \textbf{Chap. 8} in \citep{Rektorys1980}). Furthermore, by relation \eqref{eq:galerkin_rem_spec_condition}, the positive definiteness of ${\A}_0$ implies that the operator ${\T}_0$ is also positive definite.
		
		The orthogonality relation \eqref{eq:galerkin_orthog_prop} satisfied by the shifted Legendre polynomials yields
		\begin{equation}\label{eq:galerkin_rem_b_orth_prop}
			\brac{{\A}_0 \phi_i,\phi_m} = \brac{\widehat{P}_i,\widehat{P}_m} = \delta_{im}\,.
		\end{equation}
		
		Finally, by \lemref{prop:lem_galerkin_positive_def_gen}, and under the conditions \eqref{eq:galerkin_rem_spec_condition} and \eqref{eq:galerkin_rem_b_orth_prop}, it follows that the coefficient matrices of the subsystems of the Galerkin linear equations, resulting from the application of the Legendre–Galerkin spectral method, are positive definite, as stated in \thmref{prop:thm_galerkin_positive_def_syst}.
	\end{remark}
	
	\subsection{Some Notes on Solving the Derived Subsystems of the Galerkin Linear Equations}\label{subsec:cholesky_alg}
	
	In this section, our objective is to establish an efficient approach for addressing the subsystems of the linear equations \eqref{eq:galerkin_system_matrx_form_u} and \eqref{eq:galerkin_system_matrx_form_v} that originate from the Legendre–Galerkin spectral approximation. Specifically, we focus on solving the subsystems of Galerkin linear equations at each temporal step. For simplicity, the temporal layer index $k$ is suppressed. Each subsystem is subsequently represented in matrix-vector form as follows:
	\begin{equation}\label{eq:galerkin_eff_alg_unif_system}
		\vm{\A}_{N} \vm{w} = \vm{f}\,,
	\end{equation}
	where the coefficient matrix of the system is expressed as:
	\begin{equation*}
		\vm{\A}_{N} = \vm{\H}_N + \frac{4}{a \ell^2} b \vm{\I}_N\,,
	\end{equation*}
	and the unknown vector $\vm{w}$, which has $N$ components, is defined as:
	\begin{equation*}
		\vm{w} = \tps{\brac{w_1,w_2,\ldots,w_N}}\,.
	\end{equation*}
	Furthermore, the vector $\vm{f} = \tps{\brac{f_1,f_2,\ldots,f_N}}$ assumes values as specified in \eqref{eq:galerkin_system_matrx_form_u} and \eqref{eq:galerkin_system_matrx_form_v}, depending on the subsystem being solved. For subsystem \eqref{eq:galerkin_system_matrx_form_u}, the coefficients $\brac{a,b}$ are assigned the values $\brac{1,\tau^2 q_{k,N} / 2}$, whereas for subsystem \eqref{eq:galerkin_system_matrx_form_v}, the coefficients are $\brac{1 + \tau^2 \delta / 2,\tau^2 \gamma / 2}$.
	
	The matrix $\vm{\A}_{N} \in \real^{N \times N}$ is distinguished by a sparsity pattern in which nonzero entries appear exclusively on the main diagonal, the second sub-diagonal, and the second super-diagonal ({\cf} \refitem{Subsec.}{subsec:galerkin}). Moreover, $\vm{\A}_{N}$ is symmetric, and as shown in \thmref{prop:thm_galerkin_positive_def_syst}, it is positive-definite. We shall occasionally refer to this matrix as a tridiagonal matrix with a gap, where the ``gap'' refers to the two skipped diagonals: one below and one above the main diagonal.
	
	We propose the following method to solve the system of Galerkin linear equations \eqref{eq:galerkin_eff_alg_unif_system}. Leveraging the structure of the matrix $\vm{\A}_{N}$, the system can be decomposed into two independent subsystems, as described below.
	
	If the number of basis functions is even, {\ie}, $N = 2s$, where $s \in \nat$, the unknowns are enumerated in the following manner:
	\begin{equation*}
		w_{2j - 1} = \widetilde{w}_j \quad \text{and}\quad w_{2j} = \widehat{w}_j\,,\quad \text{for}\,\, j = 1,2,\ldots,s\,.
	\end{equation*}
	
	Similarly, when the number of basis functions is odd, that is, $N = 2s - 1$ with $s \in \nat$, the unknowns are reordered as follows:
	\begin{align*}
		w_{2j - 1} &= \widetilde{w}_j\,,\quad \text{for}\,\, j = 1,2,\ldots,s\,, \\
		w_{2j} &= \widehat{w}_j\,,\quad \text{for}\,\, j = 1,2,\ldots,s - 1\,.
	\end{align*}
	
	The matrices associated with the two independent subsystems, resulting from the considered decomposition, are tridiagonal. These systems can be solved in parallel for the unknowns $\widetilde{w}_j$ and $\widehat{w}_j$ using the Thomas algorithm (tridiagonal matrix algorithm). Although this algorithm is generally not stable, stability is assured when the matrix is diagonally dominant (either row- or column-wise) or symmetric positive definite (see \textbf{Theorem 9.12} in Higham \citep{Higham2002}). It is straightforward to establish that, in this case, the matrices corresponding to both decomposed tridiagonal subsystems are positive definite. In order to derive closed-form expressions for the unknowns $\widetilde{w}_j$ and $\widehat{w}_j$, one may apply the Cholesky decomposition to the coefficient matrix associated with the linear system \eqref{eq:galerkin_eff_alg_unif_system}.
	
	Consider a variant of the classical Cholesky decomposition, referred to as the square-root-free Cholesky decomposition, applied to the real symmetric positive-definite matrix $\vm{\A}_{N}$, which corresponds to the coefficient matrix of the system of linear equations \eqref{eq:galerkin_eff_alg_unif_system}, and is formulated as follows:
	\begin{equation*}
		\vm{\A}_{N} = \vm{L} \vm{D} \tps{\vm{L}}\,.
	\end{equation*}
	In the case of the square-root-free Cholesky decomposition of a tridiagonal matrix with a gap, the matrix $\vm{L}$ is defined such that all entries along its main diagonal are equal to unity, with nonzero off-diagonal entries appearing only on the second sub-diagonal. The matrix $\vm{D} = \diag{d_1,\ldots,d_N}$ is diagonal, and $\tps{\vm{L}}$ denotes the transpose of $\vm{L}$.
	
	It should be observed that this factorization not only provides closed-form solutions to the system \eqref{eq:galerkin_eff_alg_unif_system}, but also permits the parallel computation of solutions corresponding to the unknowns with odd and even indices.
	
	Under this decomposition, the system \eqref{eq:galerkin_eff_alg_unif_system} takes the following form:
	\begin{equation*}
		\vm{L} \vm{D} \tps{\vm{L}} \vm{w} = \vm{f}\,.
	\end{equation*}
	This system evidently allows for decomposition into the ensuing subsystems by introducing auxiliary unknowns $\vm{y} = \tps{\brac{y_1,y_2,\ldots,y_N}}$ and $\vm{z} = \tps{\brac{z_1,z_2,\ldots,z_N}}$, as follows:
	\begin{equation}\label{eq:galerkin_cholesky_decom}
		\begin{cases}
			\vm{L} \vm{z}       &= \vm{f}\,, \\
			\vm{D} \vm{y}       &= \vm{z}\,, \\
			\tps{\vm{L}} \vm{w} &= \vm{y}\,.
		\end{cases}
	\end{equation}
	As in the preceding case, two distinct scenarios should be considered, depending on whether $N$, the number of basis functions, is even or odd, which in turn determines the dimension of the associated matrix. Thus, the problem of solving system \eqref{eq:galerkin_eff_alg_unif_system} is reduced to that of solving system \eqref{eq:galerkin_cholesky_decom}, for which the corresponding closed-form solutions are subsequently discussed.
	
	Consider the case where $N$ is even, {\ie}, $N = 2s$ with $s \in \nat$. In this case, the solution to the system \eqref{eq:galerkin_cholesky_decom} can be expressed as follows:
	\begin{align*}
		z_1 = f_1\,, \qquad z_2 &= f_2\,, \\
		d_1 = C_1 + \frac{4}{a \ell^2} b\,,\qquad d_2 &= C_2 + \frac{4}{a \ell^2} b\,.
	\end{align*}
	For $j = 2,3,\ldots,s$, the following holds:
	\begin{align*}
		z_{2j - 1} = f_{2j - 1} + \frac{B_{2 \brac{j - 1}}}{d_{2j - 3}} z_{2j - 3}\,,\qquad z_{2j} &= f_{2j} + \frac{B_{2j - 1}}{d_{2 \brac{j - 1}}} z_{2 \brac{j - 1}}\,, \\
		d_{2j - 1} = \brac{C_{2j - 1} + \frac{4}{a \ell^2} b} - \frac{B_{2 \brac{j - 1}}^2}{d_{2j - 3}}\,,\qquad d_{2j} &= \brac{C_{2j} + \frac{4}{a \ell^2} b} - \frac{B_{2j - 1}^2}{d_{2 \brac{j - 1}}}\,,
	\end{align*}
	where $C_m = 2 A_{m - 1}^{2} A_{m + 1}^{2}$ for $m = 1,2,\ldots,N$, and $B_m = A_{m - 1} A_{m}^{2} A_{m + 1}$ for $m = 2,3,\ldots,N - 1$. Recall that $A_m = 1 / \sqrt{2m + 1}$.
	
	The vector $\vm{y}$ is computed component-wise by multiplying each entry of the vector $\vm{z}$ by the reciprocal of the corresponding diagonal element of the matrix $\vm{D}$. Therefore, for $j = 1,2,\ldots,s$, we have:
	\begin{equation*}
		y_{2j - 1} = \frac{z_{2j - 1}}{d_{2j - 1}}\,,\qquad y_{2j} = \frac{z_{2j}}{d_{2j}}\,.
	\end{equation*}
	Observing that $w_{2s} = y_{2s}$ and $w_{2s - 1} = y_{2s - 1}$, the solution to the system \eqref{eq:galerkin_eff_alg_unif_system} is subsequently determined for $j = s - 1,s - 2,\ldots,1$ and is represented by:
	\begin{equation*}
		w_{2j - 1} = y_{2j - 1} + \frac{B_{2j}}{d_{2j - 1}} w_{2j + 1}\,,\qquad w_{2j} = y_{2j} + \frac{B_{2j + 1}}{d_{2j}} w_{2 \brac{j + 1}}\,.
	\end{equation*}
	
	Let $N$ be an odd number, so that $N = 2s - 1$, where $s \in \nat$. Even in this setting, the same formulas for determining the components of the unknown vectors $\vm{z}$, $\vm{y}$ and $\vm{w}$, as well as the diagonal matrix $\vm{D}$, continue to be valid. The only distinction occurs in the computation of even-indexed components within a for-loop. Specifically, for $j = 2,3,\ldots,s - 1$, the components $z_{2j}$ and $d_{2j}$ are determined using the closed-form expressions derived previously. Likewise, for $j = 1,2,\ldots,s - 1$, the components $y_{2j}$ are defined as in the prior case. Finally, noting that $w_{2 \brac{s - 1}} = d_{2 \brac{s - 1}}$, the remaining components $w_{2j}$ for $j = s - 2, s - 3,\ldots,1$ are computed recursively by applying the formula established in the preceding case.
	
	Thus, as shown above, the Galerkin system is solved explicitly, and the corresponding computational cost is optimal, namely of order $\bigO \left( N \right)$.
	
	If sine functions are chosen as both trial and test functions, then the matrix of the resulting Galerkin system is diagonal. However, in order to form the right-hand side of this system, one must multiply the matrix associated with the coupling terms by the relevant coefficient vectors. Since this matrix is almost dense, this requires $\bigO \left( N^2 \right)$ operations. Hence, the computation of the solution at each time layer requires $\bigO \left( N^2 \right)$ arithmetic operations.
	
	Consequently, the Galerkin system can be solved at each time layer with $\bigO \left( N \right)$ operations for the Legendre–Galerkin spectral method, whereas the corresponding sine-based formulation requires $\bigO \left( N^2 \right)$ operations. Since we tackle a dynamic problem, the total number of arithmetic operations is $\bigO \left( n \times N \right)$ and $\bigO \left( n \times N^2 \right)$, respectively, where $n$ denotes the number of subintervals in the partition of the interval $\qbrac{0,T}$. Therefore, for relatively large values of $n$, the difference between the computational costs becomes noticeable. Thus, in this sense, the Legendre–Galerkin spectral method has a computational advantage in the present setting.
	
	In addition to the above, in order to implement the three-layer semi-discrete scheme \eqref{eq:tim_spec_semidiscr_u}–\eqref{eq:tim_spec_semidiscr_v} efficiently, it is necessary to obtain a simple recurrence relation between the Galerkin expansion coefficients of the corresponding approximate solutions. This is achieved by the proposed method (see \refitem{Subsection}{subsec:galerkin}). If, instead of employing the Legendre–Galerkin spectral method, one represents the solution of problem \eqref{eq:tim_spec_semidiscr_u}–\eqref{eq:tim_spec_semidiscr_v}, subject to the homogeneous boundary conditions \eqref{eq:tim_spec_bound_conds}, by means of Green's formula, then the derivation of a recurrence relation similar to the one mentioned above becomes substantially more involved. Consequently, efficient computation is no longer feasible.
	
	At the end of this subsection, we would like to draw attention to one nuance that illustrates the advantages of the proposed method. Suppose that the data of the discrete problem are continuous functions. Clearly, these functions can be approximated by Bernstein polynomials. In this case, the solution of the discrete problem will be a polynomial, since our scheme is locally linear. Moreover, this solution can be constructed exactly by means of the Legendre–Galerkin spectral method. Since the discrete problem is well posed, the polynomial solution provides an approximation to the exact solution of the same order of accuracy as the approximation of the prescribed data.
	
	\subsection{Estimate of the Error in the Legendre–Galerkin Spectral Method}\label{subsec:ritz_gal}
	
	In this subsection, we shall estimate the error incurred by the Legendre–Galerkin spectral method when applied to equations \eqref{eq:tim_spec_expanded_eq_u} and \eqref{eq:tim_spec_expanded_eq_v}. For this purpose, we reformulate these equations in the following manner:
	\begin{equation}\label{eq:ritz_gal_main}
		a w \brac{x} - b \frac{\d^2 w \brac{x}}{\d x^2} = f \brac{x}\,,\quad x \in \qbrac{0,\ell}\,.
	\end{equation}
	Here, the pair of coefficients $\brac{a,b}$ is defined as $\brac{a,b} = \brac{1,\tau^2 q_{k,N} / 2}$ for equation \eqref{eq:tim_spec_expanded_eq_u} and $\brac{a,b} = \brac{1 + \tau^2 \delta / 2,\tau^2 \gamma / 2}$ for equation \eqref{eq:tim_spec_expanded_eq_v}. In equation \eqref{eq:ritz_gal_main}, the right-hand side $f \brac{x}$ is a continuous function, coinciding with the right-hand side of either equation \eqref{eq:tim_spec_expanded_eq_u} or equation \eqref{eq:tim_spec_expanded_eq_v}, depending on the context under consideration.
	
	Equation \eqref{eq:ritz_gal_main} is considered subject to homogeneous boundary conditions, which are specified as follows:
	\begin{equation}\label{eq:ritz_gal_bound_cond}
		w \brac{0} = w \brac{\ell} = 0\,.
	\end{equation}
	
	The variational formulation of the problem \eqref{eq:ritz_gal_main}-\eqref{eq:ritz_gal_bound_cond} is well-established in the literature ({\cf}, {\eg}, Kantorovich and Krylov \citep{Kantorovich1958}). The problem \eqref{eq:ritz_gal_main}-\eqref{eq:ritz_gal_bound_cond} is thus equivalent to the problem of minimizing the following functional:
	\begin{equation}\label{eq:ritz_gal_variat_form}
		I \brac{w} = \int\limits_{0}^{\ell} \brac{b w^{\prime 2} + a w^2 - 2 f w} \d x\,,\quad w \brac{0} = w \brac{\ell} = 0\,.
	\end{equation}
	
	The subsequent discussion follows the proof technique outlined in the aforementioned book (see \textbf{Chap. IV}, \textbf{Sec. 4} in \citep{Kantorovich1958}), which concerns error estimation in the variational method for ordinary differential equations, with sines taken as the trial (basis) functions.
	
	Let $\widetilde{w} \brac{x}$ be any function that satisfies the homogeneous boundary conditions $\widetilde{w} \brac{0} = \widetilde{w} \brac{\ell} = 0$. Define the function $\eta \brac{x} = \widetilde{w} \brac{x} - w \brac{x}$. It follows immediately that $\eta \brac{0} = \eta \brac{\ell} = 0$.
	
	From \eqref{eq:ritz_gal_variat_form}, a straightforward transformation yields:
	\begin{align*}
		I \brac{\widetilde{w}} - I \brac{w} &= I \brac{w + \eta} - I \brac{w} \\
		&= 2 \int\limits_{0}^{\ell} \brac{b w^{\prime} \eta^{\prime} + a w \eta - f \eta} \d x + \int\limits_{0}^{\ell} \brac{b \eta^{\prime 2} + a {\eta}^2} \d x\,.
	\end{align*}
	In the resulting expression, the first summand is simply the variation of the integral $I$, which vanishes, {\ie}, $\delta I = 0$. Consequently, we obtain:
	\begin{equation}\label{eq:ritz_gal_diff_var}
		I \brac{\widetilde{w}} - I \brac{w} = \int\limits_{0}^{\ell} \brac{b \brac{\widetilde{w}^{\prime} - w^{\prime}}^{2} + a \brac{\widetilde{w} - w}^2} \d x\,.
	\end{equation}
	
	Recall that we have chosen the following system of functions as the basis functions:
	\begin{equation*}
		\phi_m \brac{x} = \int\limits_{0}^{x} \widehat{P}_m \brac{s} \d s\,,\quad m = 1,2,\ldots\,,
	\end{equation*}
	where $\widehat{P}_m \brac{x}$ represents the orthonormal shifted Legendre polynomial defined on the interval $\qbrac{0,\ell}$.
	
	It is known that if $u \brac{x} \in C^p \brac{\qbrac{0,\ell}}$, with $p \geq 1$, then the following bound holds ({\cf} \textbf{Theorem 4.10} in Suetin \citep{Suetin1979}):
	\begin{equation}\label{eq:ritz_gal_suetin}
		\abs{u \brac{x} - \sum_{i = 0}^{N} \hat{c}_i \widehat{P}_i \brac{x}} \leq \frac{\lgsuetin}{N^{p - \nicefrac{1}{2}}}\,,\quad x \in \qbrac{0,\ell}\,.
	\end{equation}
	Here, $\hat{c}_i$ denotes the coefficient in the Fourier–Legendre series expansion.
	
	Assume that the solution to the problem \eqref{eq:ritz_gal_main}-\eqref{eq:ritz_gal_bound_cond} is such that $w \brac{x} \in C^2 \brac{\qbrac{0,\ell}}$. From estimate \eqref{eq:ritz_gal_suetin}, we have:
	\begin{equation}\label{eq:ritz_gal_w_diff}
		\abs{w^{\prime} \brac{x} - S_N \brac{x}} \leq \frac{\lgdiffw}{\sqrt{N}}\,,\quad x \in \qbrac{0,\ell}\,,
	\end{equation}
	where
	\begin{equation*}
		S_N \brac{x} = \sum_{i = 0}^{N} \hat{a}_i \widehat{P}_i \brac{x}\,,\quad \hat{a}_i = \int\limits_{0}^{\ell} w^{\prime} \brac{x} \widehat{P}_i \brac{x} \d x\,.
	\end{equation*}
	Observe that
	\begin{equation*}
		\hat{a}_0 = \int\limits_{0}^{\ell} w^{\prime} \brac{x} \widehat{P}_0 \brac{x} \d x = \frac{1}{\sqrt{\ell}} \int\limits_{0}^{\ell} w^{\prime} \brac{x} \d x = \frac{1}{\sqrt{\ell}} \brac{w \brac{\ell} - w \brac{0}} = 0\,.
	\end{equation*}
	
	Consider the following combination:
	\begin{equation*}
		\Phi_N \brac{x} = \sum_{m = 1}^{N} \hat{a}_m \phi_m \brac{x} = \sum_{m = 1}^{N} \hat{a}_m \int\limits_{0}^{x} \widehat{P}_m \brac{s} \d s\,.
	\end{equation*}
	Taking into account that $\Phi_{N}^{\prime} \brac{x} = S_N \brac{x}$, it follows from inequality \eqref{eq:ritz_gal_w_diff} that:
	\begin{align*}
		\abs{w \brac{x} - \Phi_N \brac{x}} &= \abs{\int\limits_{0}^{x} \brac{w^{\prime} \brac{s} - \Phi_{N}^{\prime} \brac{s}} \d s} = \abs{\int\limits_{0}^{x} \brac{w^{\prime} \brac{s} - S_N \brac{s}} \d s} \\
		&\leq \int\limits_{0}^{\ell} \abs{w^{\prime} \brac{x} - S_N \brac{x}} \d x \leq \frac{\lgdiffw \ell}{\sqrt{N}}\,.
	\end{align*}
	Thus, we have established the completeness of the system of functions $\left\{ \phi_m \brac{x} \right\}_{m = 1}^{\infty}$.
	
	Consider the following approximation:
	\begin{equation*}
		w_N \brac{x} = \sum_{m = 1}^{N} a_m \phi_m \brac{x}\,,
	\end{equation*}
	where $a_m$ ($m = 1,2,\ldots,N$) represents the solution to the Ritz–Galerkin system.
	
	It is important to note that, in our case, the systems of linear equations emerging from the Ritz–Galerkin and Legendre–Galerkin spectral methods coincide.
	
	On the linear hull spanned by the system of functions $\phi_1 \brac{x}, \phi_2 \brac{x},\ldots,\phi_N \brac{x}$, the functional $I \brac{w}$ attains its minimum at $w \brac{x} = w_N \brac{x}$. From this, the following inequality holds:
	\begin{equation*}
		I \brac{w_N} - I \brac{w} \leq I \brac{\Phi_N} - I \brac{w}\,.
	\end{equation*}
	By virtue of \eqref{eq:ritz_gal_diff_var}, this inequality takes the following form:
	\begin{equation}\label{eq:ritz_gal_unif_integ_ineq}
		\int\limits_{0}^{\ell} \qbrac{b \brac{{w}_{N}^{\prime} - w^{\prime}}^{2} + a \brac{{w}_{N} - w}^2} \d x \leq \int\limits_{0}^{\ell} \qbrac{b \brac{{\Phi}_{N}^{\prime} - w^{\prime}}^{2} + a \brac{{\Phi}_{N} - w}^2} \d x\,.
	\end{equation}
	It follows immediately from \eqref{eq:ritz_gal_unif_integ_ineq} that the following result holds:
	\begin{equation*}
		a \int\limits_{0}^{\ell} \brac{{w}_{N} - w}^2 \d x \leq \int\limits_{0}^{\ell} \qbrac{b \brac{{\Phi}_{N}^{\prime} - w^{\prime}}^{2} + a \brac{{\Phi}_{N} - w}^2} \d x\,.
	\end{equation*}
	or, equivalently:
	\begin{equation}\label{eq:ritz_gal_norm_error}
		a \norm{{w}_{N} - w}^2 \leq b \norm{{\Phi}_{N}^{\prime} - w^{\prime}}^2 + a \norm{{\Phi}_{N} - w}^2\,.
	\end{equation}
	
	We shall now apply the classical Steklov inequality, which states:
	\begin{equation}\label{eq:ritz_gal_func_bound_diff}
		\norm{w} \leq \frac{\ell}{\pi} \norm{w^{\prime}}\,.
	\end{equation}
	Hence, in view of inequality \eqref{eq:ritz_gal_func_bound_diff}, it follows that
	\begin{equation}\label{eq:ritz_gal_diff_ineq_der}
		\norm{\Phi_N - w} \leq \frac{\ell}{\pi} \norm{\Phi_{N}^{\prime} - w^{\prime}}\,.
	\end{equation}
	Substituting inequality \eqref{eq:ritz_gal_diff_ineq_der} into \eqref{eq:ritz_gal_norm_error}, we find that:
	\begin{equation}\label{eq:ritz_gal_norm_error_refined}
		\norm{{w}_{N} - w} \leq b_1 \norm{{\Phi}_{N}^{\prime} - w^{\prime}}\,,\quad b_1 = \sqrt{\frac{b}{a} + \frac{\ell^2}{\pi^2}}\,.
	\end{equation}
	Taking into account that $\Phi_{N}^{\prime} \brac{x} = S_N \brac{x}$, the following inequality is obtained from \eqref{eq:ritz_gal_norm_error_refined}:
	\begin{equation}\label{eq:ritz_gal_norm_error_s}
		\norm{{w}_{N} - w} \leq b_1 \norm{S_N - w^{\prime}}\,.
	\end{equation}
	
	Let us assume that the solution to problem \eqref{eq:ritz_gal_main}-\eqref{eq:ritz_gal_bound_cond}, $w \brac{x} \in C^{p + 1} \brac{\qbrac{0,\ell}}$, with $p \geq 1$. Then, by virtue of bound \eqref{eq:ritz_gal_suetin}, inequality \eqref{eq:ritz_gal_norm_error_s} leads to the following estimate:
	\begin{equation}\label{eq:ritz_gal_estimate_final}
		\norm{{w}_{N} - w} \leq \frac{\lgerrorgal}{N^{p - \nicefrac{1}{2}}}\,.
	\end{equation}
	
	Thus, the following theorem is valid.
	\begin{theorem}\label{prop:thm_error_galerkin}
		Let the solution to the boundary value problem \eqref{eq:ritz_gal_main}-\eqref{eq:ritz_gal_bound_cond} be such that $w \brac{x} \in C^{p + 1} \brac{\qbrac{0,\ell}}$ with $p \geq 1$. Then, for the approximation $w_N \brac{x} = a_1 \phi_1 \brac{x} + a_2 \phi_2 \brac{x} + \ldots + a_N \phi_N \brac{x}$, where the coefficients $a_m$ $\brac{m = 1,2,\ldots,N}$ are determined by solving the Ritz–Galerkin system, the error bound given by inequality \eqref{eq:ritz_gal_estimate_final} holds.
	\end{theorem}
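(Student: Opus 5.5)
The proof will follow the Ritz-type argument already laid out in this subsection, so it mostly assembles the preceding inequalities.

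\textbf{Step 1 (projection property).} The Ritz–Galerkin approximation $w_N$ minimizes the functional \eqref{eq:ritz_gal_variat_form} over $\operatorname{span}\{\phi_1,\ldots,\phi_N\}$. By the identity \eqref{eq:ritz_gal_diff_var}, for any $\widetilde w$ vanishing at the endpoints, $I(\widetilde w)-I(w)$ equals the energy norm $\int_0^\ell \brac{b\,\brac{\widetilde w'-w'}^2+a\,\brac{\widetilde w-w}^2}\,\d x$. Hence $w_N$ is the best approximation of $w$ in this energy norm. The first variation vanishing is justified by integrating by parts against the equation, using $w\in C^2$ and $\eta(0)=\eta(\ell)=0$. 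Comparing $w_N$ with the competitor $\Phi_N=\sum_{m=1}^N \hat a_m\phi_m$ gives \eqref{eq:ritz_gal_unif_integ_ineq}.

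\textbf{Step 2 (reduction to the derivative error).} Dropping the $b$-term on the left of \eqref{eq:ritz_gal_unif_integ_ineq} gives \eqref{eq:ritz_gal_norm_error}. Since $\Phi_N-w$ vanishes at $0$ and $\ell$ (each $\phi_m$ vanishes at the endpoints, as does $w$), the Steklov inequality \eqref{eq:ritz_gal_func_bound_diff} applies. This yields \eqref{eq:ritz_gal_norm_error_s}, namely $\norm{w_N-w}\le b_1\norm{S_N-w'}$ with $b_1=\sqrt{b/a+\ell^2/\pi^2}$. The identity $\Phi_N'=S_N$ relies on $\hat a_0=0$, which follows from $w(0)=w(\ell)=0$. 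Note also that $b_1$ is bounded uniformly in $\tau$ and $k$. Indeed, $a\ge 1$, and $b$ is either $\tau^2\gamma/2$ or $\tau^2 q_{k,N}/2$, where $q_{k,N}$ is bounded by the a priori bounds of \lemref{prop:lemma1}. So the constant does not blow up.

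\textbf{Step 3 (Legendre approximation of $w'$).} If $w\in C^{p+1}\brac{\qbrac{0,\ell}}$, then $w'\in C^p\brac{\qbrac{0,\ell}}$. Suetin's bound \eqref{eq:ritz_gal_suetin} then gives the uniform estimate $\abs{w'(x)-S_N(x)}\le c\,N^{-(p-1/2)}$, where $S_N$ is the truncated Fourier–Legendre series in the orthonormal $\widehat P_i$. Integrating the square over $\qbrac{0,\ell}$ gives $\norm{S_N-w'}\le \sqrt{\ell}\,c\,N^{-(p-1/2)}$. Combining with Step 2 yields \eqref{eq:ritz_gal_estimate_final}, with the constant equal to $b_1\sqrt{\ell}$ times Suetin's constant.

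\textbf{Main obstacle.} The delicate point is Step 1: the minimization must be justified in the right function class. The approach is to work in $H_0^1(0,\ell)$, where $I$ is strictly convex and coercive because $a,b>0$. On the finite-dimensional span, the Galerkin system with the positive definite matrix of \thmref{prop:thm_galerkin_positive_def_syst} is exactly the Euler–Lagrange condition, so its unique solution is the minimizer. One must also check that the applicability of Suetin's theorem requires no more than $C^p$ regularity of $w'$ on the closed interval. This is guaranteed by the hypothesis $w\in C^{p+1}$.
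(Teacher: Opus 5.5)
Your proposal is correct and follows the paper's proof essentially step by step: you use the minimality of $w_N$ for $I$ on the span, compared against the competitor $\Phi_N$ via \eqref{eq:ritz_gal_diff_var}; you drop the $b$-term; you apply Steklov's inequality to $\Phi_N - w$ to reach \eqref{eq:ritz_gal_norm_error_s}; and you finish with Suetin's bound for $w' \in C^p$, where you also make explicit the factor $\sqrt{\ell}$ in passing from the uniform norm to the $L^2$ norm. The one thing to drop is your aside on uniformity of $b_1$ in $\tau$ and $k$: the theorem concerns a single boundary value problem with fixed $a,b$, so it is not needed, and \lemref{prop:lemma1} bounds the semi-discrete iterates $u_k$, not the Galerkin coefficients entering $q_{k,N}$, so that justification is not literally available anyway.
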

	
	\begin{remark}\label{prep:rem_error_general_eq}
		It is easy to observe that the validity of the error estimate stated in \eqref{eq:ritz_gal_estimate_final} naturally extends to a more general equation. Specifically, consider the equation
		\begin{equation}\label{eq:ritz_gal_rem_gen_eq}
			\frac{\d}{\d x}\brac{p \brac{x} \frac{\d w}{\d x}} - q \brac{x} w \brac{x} = f \brac{x}\,,\quad x \in \qbrac{0,\ell}\,,
		\end{equation}
		subject to homogeneous Dirichlet boundary conditions. Assume that the functions $p \brac{x}$, $q \brac{x}$, and $f \brac{x}$ satisfy the standard regularity conditions. More precisely, $p \brac{x}$ is continuously differentiable, while $q \brac{x}$ and $f \brac{x}$ are continuous functions. Furthermore, the coefficient functions $p \brac{x}$ and $q \brac{x}$ are assumed to satisfy the uniform positivity conditions
		\begin{equation*}
			p \brac{x} \geq p_0 > 0\quad \text{and}\quad q \brac{x} \geq q_0 > 0\,.
		\end{equation*}
	\end{remark}
	
	One should note that the error $w \brac{x} - w_N \brac{x}$ can also be estimated using the uniform norm. To achieve this, additional transformations are required.
	
	It is evident from inequality \eqref{eq:ritz_gal_unif_integ_ineq} that the following holds:
	\begin{equation*}
		b \norm{{w}_{N}^{\prime} - w^{\prime}}^2 \leq b \norm{{\Phi}_{N}^{\prime} - w^{\prime}}^2 + a \norm{{\Phi}_{N} - w}^2\,.
	\end{equation*}
	From this, taking into account \eqref{eq:ritz_gal_diff_ineq_der}, we deduce:
	\begin{equation*}
		\norm{{w}_{N}^{\prime} - w^{\prime}}^2 \leq \brac{1 + \frac{a {\ell}^2}{b \pi^2}} \norm{{\Phi}_{N}^{\prime} - w^{\prime}}^2\,,
	\end{equation*}
	or, equivalently:
	\begin{equation}\label{eq:ritz_gal_unif_diff_error}
		\norm{{w}_{N}^{\prime} - w^{\prime}} \leq b_2 \norm{S_N - w^{\prime}}\,,\quad b_2 = \sqrt{1 + \frac{a {\ell}^2}{b \pi^2}}\,.
	\end{equation}
	
	From the representation
	\begin{equation*}
		w \brac{x} = \int\limits_{0}^{x} w^{\prime} \brac{s} \d s\,,
	\end{equation*}
	and by standard reasoning, one obtains ({\cf} \textbf{Chap. IV}, \textbf{Sec. 4} in Kantorovich and Krylov \citep{Kantorovich1958}):
	\begin{equation*}
		\max_{0 \leq x \leq \ell} \abs{w \brac{x}} \leq \sqrt{\frac{\ell}{2}} \norm{w^{\prime}}\,.
	\end{equation*}
	By this inequality, we have:
	\begin{equation*}
		\max_{0 \leq x \leq \ell} \abs{w_N \brac{x} - w \brac{x}} \leq \sqrt{\frac{\ell}{2}} \norm{{w}_{N}^{\prime} - w^{\prime}}\,.
	\end{equation*}
	From this, taking \eqref{eq:ritz_gal_unif_diff_error} into consideration, it follows that:
	\begin{equation}\label{eq:ritz_gal_error_unif_norm}
		\max_{0 \leq x \leq \ell} \abs{w_N \brac{x} - w \brac{x}} \leq b_2 \sqrt{\frac{\ell}{2}} \norm{S_N - w^{\prime}}\,.
	\end{equation}
	
	If $w \brac{x} \in C^{p + 1} \brac{\qbrac{0,\ell}}$ with $p \geq 1$, then from \eqref{eq:ritz_gal_error_unif_norm}, and in accordance with \eqref{eq:ritz_gal_suetin}, the following inequality is obtained:
	\begin{equation}\label{eq:ritz_gal_unif_error_final}
		\max_{0 \leq x \leq \ell} \abs{w_N \brac{x} - w \brac{x}} \leq \sqrt{\frac{\ell}{2}}\frac{b_2 \lgerrunifnorm}{N^{p - \nicefrac{1}{2}}}\,.
	\end{equation}
	
	It is observed that by substituting the values of $a$ and $b$ corresponding to equations \eqref{eq:tim_spec_expanded_eq_u} and \eqref{eq:tim_spec_expanded_eq_v} into the expression for $b_2$, the inequality $b_2 \leq \lgboundbtwo / \tau$, with $\tau = T / n$, follows immediately. If the number of divisions $n$ is chosen as the integer part of $N^s$ ($n = \qbrac{N^s}$), where $p - 1/2 - s >0$, then bound \eqref{eq:ritz_gal_unif_error_final} takes the following form:
	\begin{equation*}
		\max_{0 \leq x \leq \ell} \abs{w_N \brac{x} - w \brac{x}} \leq \frac{\lgfinalest}{N^{p - \nicefrac{1}{2} - s}}\,,
	\end{equation*}
	where $p > 1 / 2 + s$, $p \geq 1$ is a positive integer, and $s > 0$.
	
	\begin{remark}\label{prep:rem_unif_error_general_eq}
		The estimate \eqref{eq:ritz_gal_unif_error_final} continues to hold even for the general equation \eqref{eq:ritz_gal_rem_gen_eq}. In this setting, the constant $b_2$ is precisely defined by the following relation:
		\begin{equation}\label{eq:ritz_gal_rem_unif_btwo}
			b_2 = \sqrt{p_0^{-1} \brac{\max_{0 \leq x \leq \ell} p \brac{x} + \frac{\ell^2}{\pi^2} \max_{0 \leq x \leq \ell} q \brac{x}}}\,.
		\end{equation}
	\end{remark}
	
	In our opinion, the reader will be interested in an explicit error estimate with a uniform norm in the natural class of solutions for equation \eqref{eq:ritz_gal_rem_gen_eq}.
	\begin{remark}\label{prep:rem_unif_explic_error_general_eq}
		Let equation \eqref{eq:ritz_gal_rem_gen_eq} be considered with homogeneous boundary conditions. If the standard regularity conditions are satisfied (see \remref{prep:rem_error_general_eq}), then the following estimate is valid:
		\begin{equation}\label{eq:rem_gener_ritz_gal_unif_norm_desired}
			\max_{0 \leq x \leq \ell} \abs{w_N \brac{x} - w \brac{x}} \leq \frac{\hat{b}_2}{\sqrt{2 \brac{2N + 1} \brac{2N + 5}}} \norm{f}\,,
		\end{equation}
		where $\hat{b}_2 = \hat{b}_1 b_2 \ell \sqrt{\ell}$, $b_2$ is specified by equality \eqref{eq:ritz_gal_rem_unif_btwo}, and
		\begin{equation*}
			\hat{b}_1 = p_0^{-2} \brac{\frac{\ell}{\pi} \max_{0 \leq x \leq \ell} \abs{p^{\prime} \brac{x}} + \frac{{\ell}^2}{\pi^2} \max_{0 \leq x \leq \ell} q \brac{x} + p_0}\,.
		\end{equation*}
	\end{remark}
	\begin{proof}
		Let $\hat{c}_m$ denote the coefficients of the Legendre–Fourier series expansion of the function $w^{\prime \prime} \brac{x}$, given by
		\begin{equation*}
			\hat{c}_m = \int\limits_{0}^{\ell} w^{\prime \prime} \brac{x} \widehat{P}_m \brac{x} \d x \quad \brac{m = 0,1,2,\ldots}\,.
		\end{equation*}
		
		We now establish the relationship between the coefficients $\hat{c}_m$ and $\hat{a}_m$, where $\hat{a}_m$ denotes the coefficients of the Legendre–Fourier series expansion of the function $w^{\prime} \brac{x}$. For the orthonormal shifted Legendre polynomials, the following relation holds:
		\begin{equation}\label{eq:gener_ritz_gal_orthnorm_leg_diff}
			\widehat{P}_m \brac{x} = \frac{\ell}{2} A_{m} \brac{A_{m + 1} \widehat{P}_{m + 1}^{\prime} \brac{x} - A_{m - 1} \widehat{P}_{m - 1}^{\prime} \brac{x}}\,.
		\end{equation}
		Upon considering \eqref{eq:gener_ritz_gal_orthnorm_leg_diff}, the following result is derived:
		\begin{equation}\label{eq:gener_ritz_gal_hat_a_integr}
			\hat{a}_m = \int\limits_{0}^{\ell} w^{\prime} \brac{x} \widehat{P}_m \brac{x} \d x = \frac{\ell}{2} A_{m} \brac{A_{m + 1} \int\limits_{0}^{\ell} w^{\prime} \brac{x} \widehat{P}_{m + 1}^{\prime} \brac{x} \d x - A_{m - 1} \int\limits_{0}^{\ell} w^{\prime} \brac{x} \widehat{P}_{m - 1}^{\prime} \brac{x} \d x}\,.
		\end{equation}
		By applying integration by parts to the first summand of equality \eqref{eq:gener_ritz_gal_hat_a_integr}, we obtain:
		\begin{align*}
			A_{m + 1} \int\limits_{0}^{\ell} w^{\prime} \brac{x} \widehat{P}_{m + 1}^{\prime} \brac{x} \d x &= A_{m + 1} \brac{\qbrac{w^{\prime} \brac{x} \widehat{P}_{m + 1} \brac{x}}_{0}^{\ell} - \int\limits_{0}^{\ell} w^{\prime \prime} \brac{x} \widehat{P}_{m + 1} \brac{x} \d x} \\
			&= \frac{1}{\sqrt{\ell}} \brac{w^{\prime} \brac{\ell} + \brac{-1}^m w^{\prime} \brac{0}} - A_{m + 1} \hat{c}_{m + 1}\,.
		\end{align*}
		Using the same reasoning, for the second summand of equation \eqref{eq:gener_ritz_gal_hat_a_integr}, we arrive at the following result:
		\begin{equation*}
			A_{m - 1} \int\limits_{0}^{\ell} w^{\prime} \brac{x} \widehat{P}_{m - 1}^{\prime} \brac{x} \d x = \frac{1}{\sqrt{\ell}} \brac{w^{\prime} \brac{\ell} + \brac{-1}^m w^{\prime} \brac{0}} - A_{m - 1} \hat{c}_{m - 1}\,.
		\end{equation*}
		By substituting the last two resulting equalities into \eqref{eq:gener_ritz_gal_hat_a_integr}, one yields:
		\begin{equation*}
			\hat{a}_m = \frac{\ell}{2} A_{m} \brac{A_{m - 1} \hat{c}_{m - 1} - A_{m + 1} \hat{c}_{m + 1}}\,,
		\end{equation*}
		From this, it follows that:
		\begin{equation*}
			\hat{a}_{m}^{2} \leq \frac{\ell^2}{2} A_{m}^{2} \brac{A_{m - 1}^{2} \hat{c}_{m - 1}^{2} + A_{m + 1}^{2} \hat{c}_{m + 1}^{2}}\,.
		\end{equation*}
		Summing both sides of the preceding inequality from $N + 1$ to $\infty$ and invoking Bessel's inequality leads to the following result:
		\begin{align}\label{eq:gener_ritz_gal_bess_ineq}
			\norm{S_N - w^{\prime}}^2 = \sum_{m = N + 1}^{\infty} \hat{a}_{m}^{2} &\leq \frac{\ell^2}{2} \sum_{m = N + 1}^{\infty} A_{m}^{2} \brac{A_{m - 1}^{2} \hat{c}_{m - 1}^{2} + A_{m + 1}^{2} \hat{c}_{m + 1}^{2}}\nonumber \\
			& \leq \frac{\ell^2}{2} A_{N + 1}^{2} \brac{A_{N}^{2} \sum_{m = N + 1}^{\infty} \hat{c}_{m - 1}^{2} + A_{N + 2}^{2} \sum_{m = N + 1}^{\infty} \hat{c}_{m + 1}^{2}}\nonumber \\
			& \leq \frac{\ell^2}{2} A_{N + 1}^{2} \brac{A_{N}^{2} + A_{N + 2}^{2}} \sum_{m = N}^{\infty} \hat{c}_{m}^{2} \leq \frac{\ell^2}{\brac{2N + 1} \brac{2N + 5}} \norm{w^{\prime \prime}}^2\,.
		\end{align}
		
		It is observed that, analogous to equation \eqref{eq:ritz_gal_main}, inequality \eqref{eq:ritz_gal_error_unif_norm} also holds for equation \eqref{eq:ritz_gal_rem_gen_eq}, where the constant $b_2$ is determined by equality \eqref{eq:ritz_gal_rem_unif_btwo}. Substituting inequality \eqref{eq:gener_ritz_gal_bess_ineq} into \eqref{eq:ritz_gal_error_unif_norm} yields:
		\begin{equation}\label{eq:gener_ritz_gal_unif_norm_sec_derr_w}
			\max_{0 \leq x \leq \ell} \abs{w_N \brac{x} - w \brac{x}} \leq \frac{b_2 \ell \sqrt{\ell}}{\sqrt{2\brac{2N + 1} \brac{2N + 5}}} \norm{w^{\prime \prime}}\,.
		\end{equation}
		
		For equation \eqref{eq:ritz_gal_rem_gen_eq}, from the condition that the variation of the energy functional vanishes, it follows that:
		\begin{equation}\label{eq:gener_ritz_gal_variat_energ_func}
			\int\limits_{0}^{\ell} \brac{p w^{\prime 2} + q w^2 + f w} \d x = 0\,.
		\end{equation}
		By straightforward reasoning, from equation \eqref{eq:gener_ritz_gal_variat_energ_func}, it can be deduced that:
		\begin{equation*}
			\norm{w^{\prime}}^2 \leq \frac{1}{p_0} \norm{f} \norm{w}\,.
		\end{equation*}
		From this, and considering \eqref{eq:ritz_gal_func_bound_diff}, the following is obtained:
		\begin{equation}\label{eq:gener_ritz_gal_inqts_w_diff_w}
			\norm{w^{\prime}} \leq \frac{\ell}{\pi p_0} \norm{f}\,,\quad \norm{w} \leq \frac{\ell^2}{\pi^2 p_0} \norm{f}\,.
		\end{equation}
		
		We shall rewrite equation \eqref{eq:ritz_gal_rem_gen_eq} in the following form:
		\begin{equation*}
			p w^{\prime \prime} = -p^{\prime} w^{\prime} + q w + f\,.
		\end{equation*}
		Whence, based on the estimates provided in \eqref{eq:gener_ritz_gal_inqts_w_diff_w}, the following conclusion follows:
		\begin{equation}\label{eq:gener_ritz_gal_norm_second_w}
			\norm{w^{\prime \prime}} \leq \frac{1}{p_0} \brac{\max_{0 \leq x \leq \ell} \abs{p^{\prime} \brac{x}} \norm{w^{\prime}} + \max_{0 \leq x \leq \ell} q \brac{x} \norm{w} + \norm{f}} \leq \hat{b}_1 \norm{f}\,.
		\end{equation}
		Finally, comparing inequalities \eqref{eq:gener_ritz_gal_unif_norm_sec_derr_w} and \eqref{eq:gener_ritz_gal_norm_second_w} gives the estimate \eqref{eq:rem_gener_ritz_gal_unif_norm_desired}.
	\end{proof}
	
	\section{Illustrative Numerical Results}\label{sec:num_res}
	
	In this section, we discuss four benchmark problems corresponding to the initial–boundary value problem \eqref{eq:timosh_spec_nonlinear}-\eqref{eq:timosh_spec_bound_conds} in order to verify consistency and to validate the performance of the proposed combined numerical schemes against the theoretical findings obtained. For these cases, exact analytical solutions are available. All coefficients appearing in equations \eqref{eq:timosh_spec_nonlinear}-\eqref{eq:timosh_spec_linear} are set equal to one; {\ie}, $\alpha = \beta = \gamma = \delta = a_1 = a_2 = 1$.
	
	In \refitem{Test}{bm:test4}, the initial data are prescribed in the form of a wave packet. In this case, an exact analytical solution is not available. In contrast to the previous cases, all coefficients retain the same values, except for the coupling coefficients, which are specified as $a_1 = a_2 = 0.5$. Furthermore, in all considered settings, the spatial variable satisfies $x \in \qbrac{0, 2}$. The length of the interval is chosen to be $2$ because it coincides with the extent of the orthogonality interval of the standard Legendre polynomials.
	
	The approximation errors for each temporal layer $k = 2,3,\ldots n$ (recall that the cases $k = 0$ and $k = 1$ correspond to the prescribed initial data and are therefore excluded) are defined as the $L^2 \brac{0,\ell}$-norms of the differences between the exact solutions and their corresponding numerical approximations. Thus, we set:
	\begin{equation*}
		E_{1,k} = \norm{u\brac{\cdot, t_k} - \tilde{u}_{k,N}\brac{\cdot}}\,,\quad \text{and}\quad E_{2,k} = \norm{v\brac{\cdot, t_k} - \tilde{v}_{k,N}\brac{\cdot}}\,.
	\end{equation*}
	
	Since the accuracy and computational efficiency of the proposed algorithm are closely related to the effective computation of the integrals involved, these integrals are approximated using a Gauss–Legendre quadrature rule with an error-controlled subdivision of the integration interval.
	
	To visualize the results for all benchmark problems under consideration, we opted to use the Okabe–Ito palette. The provided figures show the solid (green) line depicting the exact analytical solution, whereas the dashed (orange) line represents the corresponding numerical approximation. The approximation errors $E_{1,k}$ and $E_{2,k}$ are illustrated using blue and orange lines, respectively, with the temporal grid nodes indicated by circular and square markers.
	
	We now turn our attention to the benchmark problems.
	
	\testsubsection		% produces: "Test 1"
	Consider the case in which the exact analytical solutions are given by the following functions:
	\begin{equation*}
		u\brac{x, t} = \sin\brac{\frac{\pi}{2} t} \sin\brac{\frac{\lambda \pi}{\ell} x}\,, \quad \text{and} \quad v\brac{x, t} = \sin\brac{\frac{\pi}{2} t} \sin\brac{\frac{\lambda \pi}{\ell} x}\,.
	\end{equation*}
	
	In this setting, the temporal interval is defined by $0 \leq t \leq 1$, and the oscillation parameter is taken as $\lambda = 14$. Moreover, in this benchmark problem, the temporal interval is uniformly partitioned into $n = 256$ subintervals.
	
	\begin{figure}
		\centering
		\begin{subfigure}{.49\textwidth}
			\centering
			\includegraphics[width=\textwidth,height=\textheight,keepaspectratio]{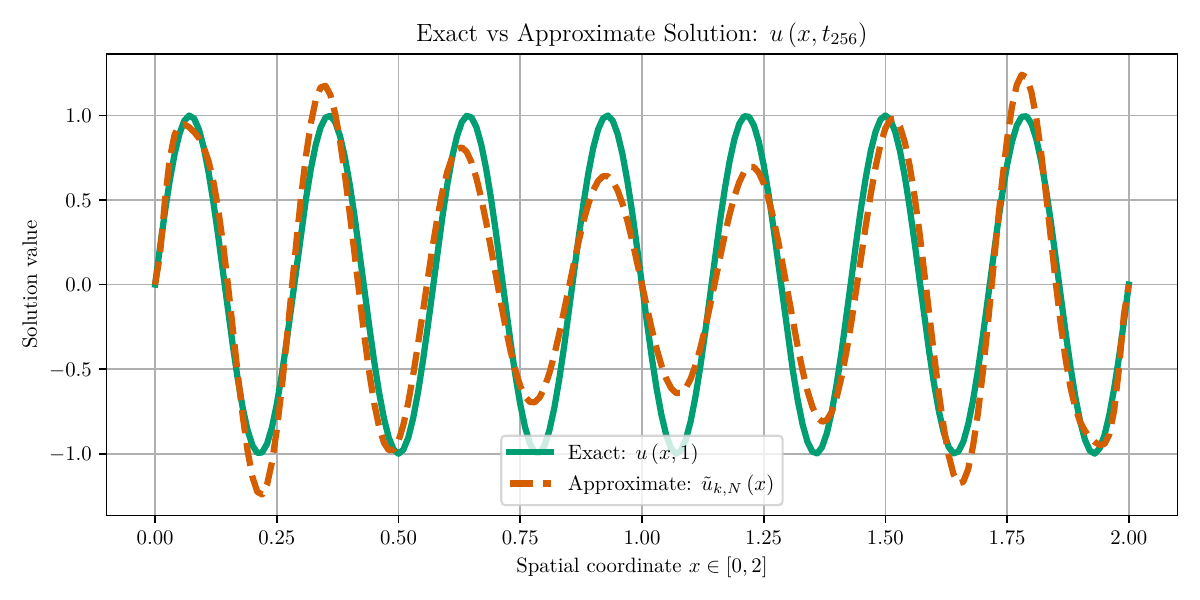}
			\caption{Corresponds to the solution $u\brac{x, t}$.}
			\label{fig:test1_u_case1}
		\end{subfigure}
		\hfill
		\begin{subfigure}{.49\textwidth}
			\centering
			\includegraphics[width=\textwidth,height=\textheight,keepaspectratio]{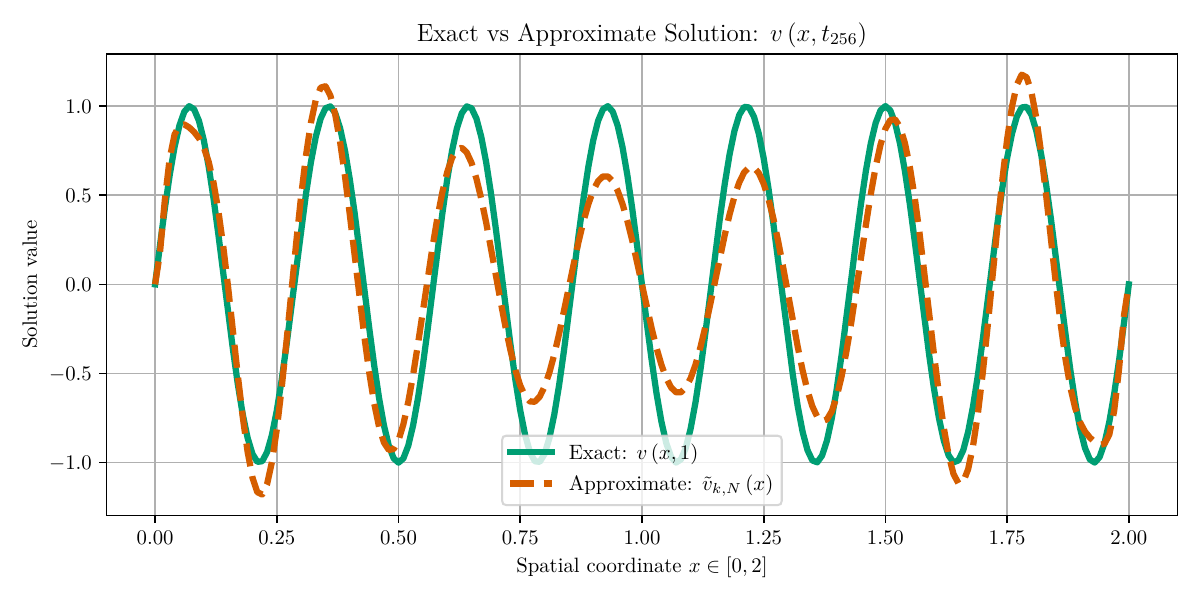}
			\caption{Corresponds to the solution $v\brac{x, t}$.}
			\label{fig:test1_v_case1}
		\end{subfigure}
		\caption{Comparison between the exact solutions and their numerical approximations at the final time layer, where the number of trial functions is specified as $N = 20$.}
		\label{fig:test1_both_solns}
	\end{figure}
	
	Observe that, in this scenario, the accuracy attained with respect to the time variable is approximately $10^{-5}$. Owing to the large number of spatial oscillations, it may be presumed that using $N = 20$ trial functions is insufficient to yield a highly accurate approximation (see \figref{fig:test1_both_solns}). Nevertheless, it is important that the combined scheme maintains and replicates the qualitative structure of the exact solution. Thus, it is reasonable to expect that, upon increasing $N$, the approximate solution should coincide with the exact solution with high accuracy.
	
	Consider the case in which the previously used number of basis functions is increased by $15$, resulting in $N = 35$.
	
	\begin{figure}
		\centering
		\begin{subfigure}{.49\textwidth}
			\centering
			\includegraphics[width=\textwidth,height=\textheight,keepaspectratio]{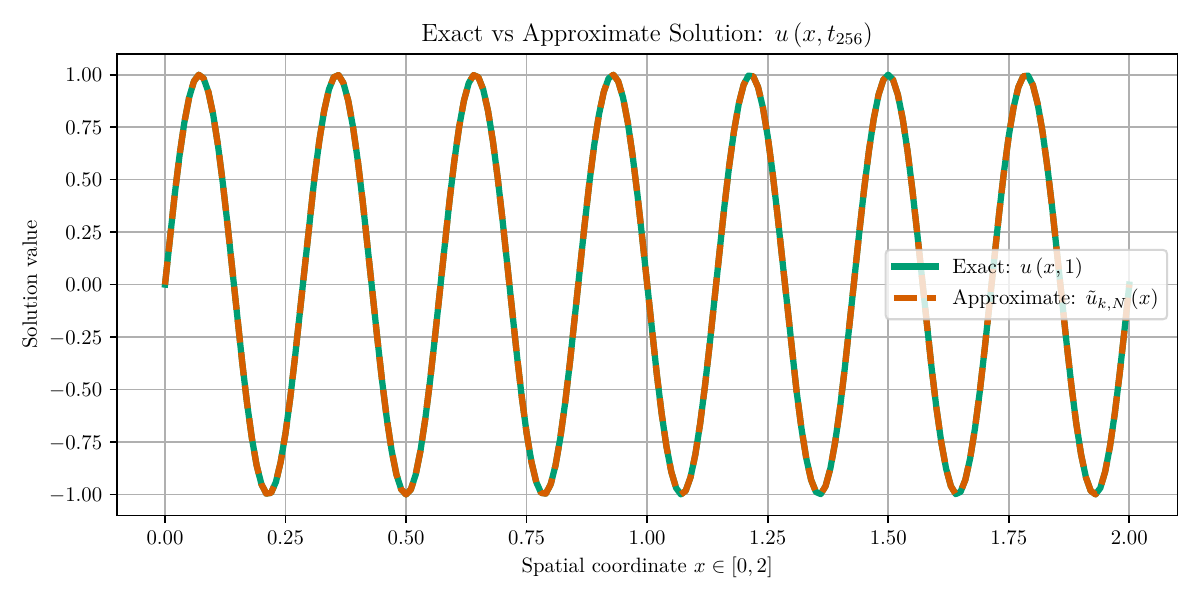}
			\caption{Solution $u$: the exact solution and its approximation at the final time layer.}
			\label{fig:test1_u_case2}
		\end{subfigure}
		\hfill
		\begin{subfigure}{.49\textwidth}
			\centering
			\includegraphics[width=\textwidth,height=\textheight,keepaspectratio]{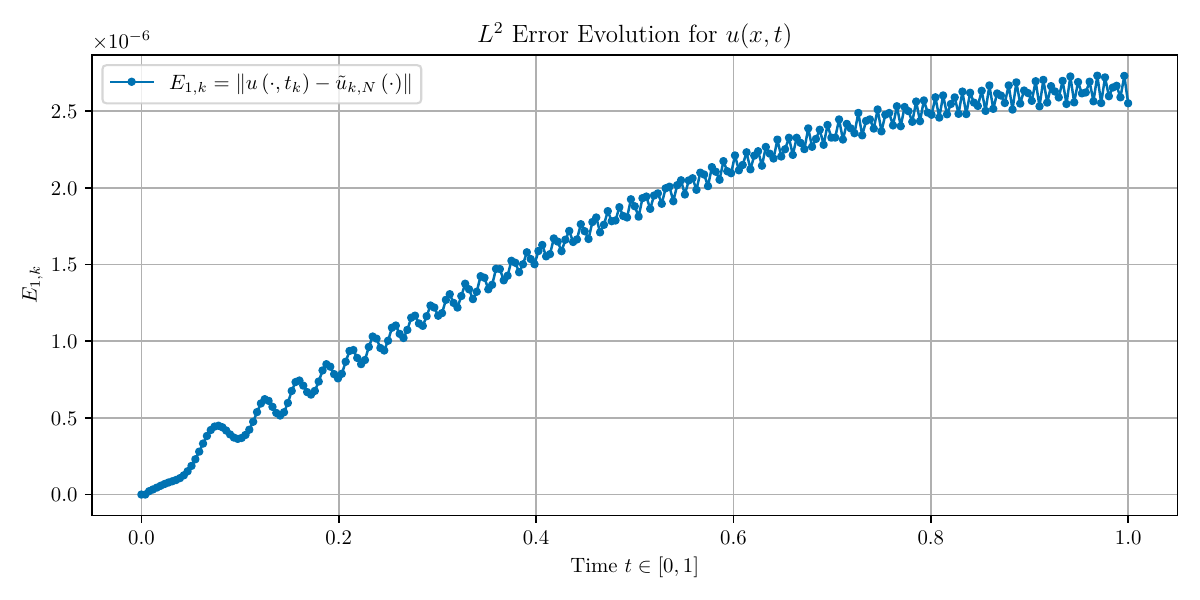}
			\caption{Temporal evolution of the error $E_{1,k}$ over all time layers.}
			\label{fig:error_test1_u_case2}
		\end{subfigure}
		\hfill
		\begin{subfigure}{.49\textwidth}
			\centering
			\includegraphics[width=\textwidth,height=\textheight,keepaspectratio]{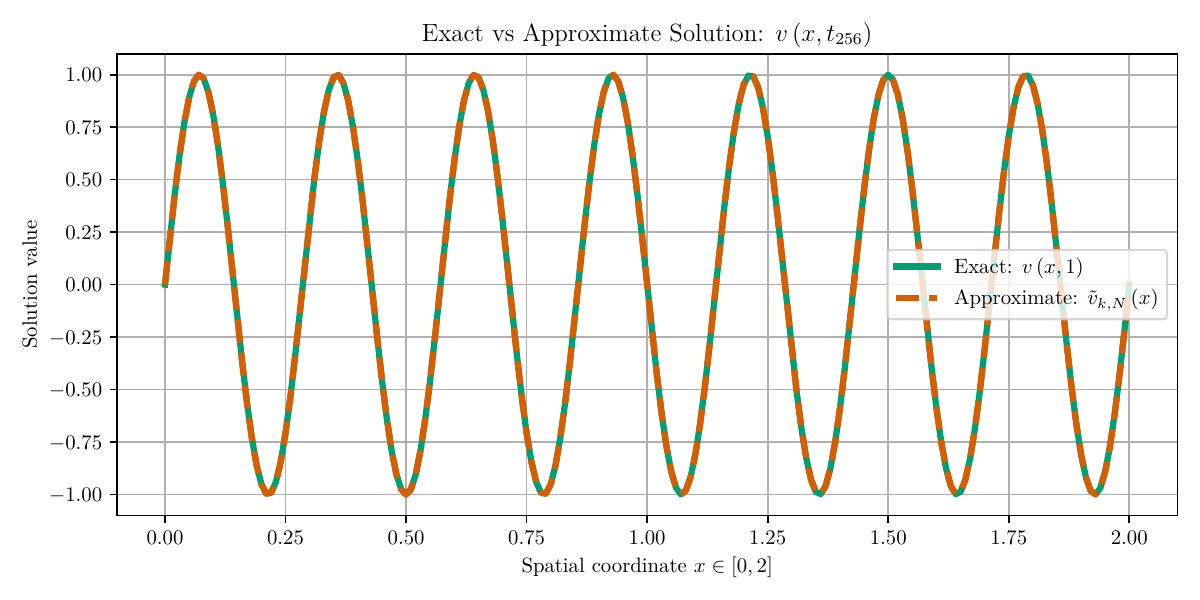}
			\caption{Solution $v$: the exact solution and its approximation at the final time layer.}
			\label{fig:test1_v_case2}
		\end{subfigure}
		\hfill
		\begin{subfigure}{.49\textwidth}
			\centering
			\includegraphics[width=\textwidth,height=\textheight,keepaspectratio]{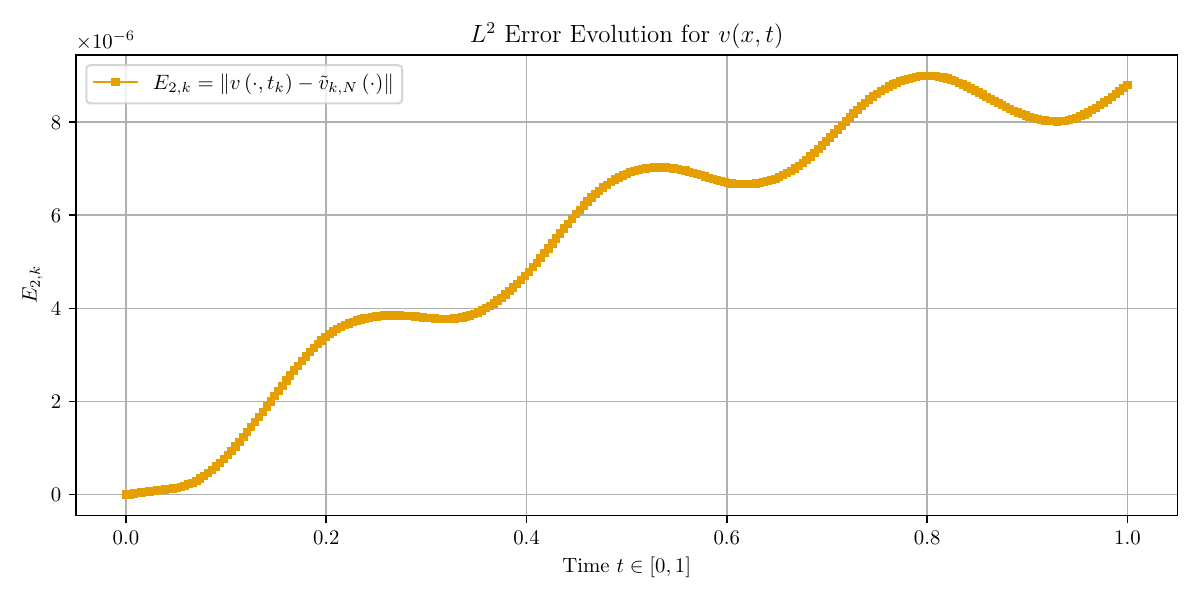}
			\caption{Temporal evolution of the error $E_{2,k}$ over all time layers.}
			\label{fig:error_test1_v_case2}
		\end{subfigure}
		\caption{Comparison of the exact solutions and their numerical approximations at the final temporal layer using $N = 35$ approximation basis functions, along with the evolution of the associated errors across all time layers.}
		\label{fig:test1_all_solns_with_error}
	\end{figure}
	
	As we expected, by increasing the number of basis functions $N$, the combined numerical scheme captures the oscillatory solution quite well, as illustrated in \figref{fig:test1_all_solns_with_error}. Moreover, \figsubref{fig:test1_all_solns_with_error}{fig:error_test1_u_case2} and \figsubref{fig:test1_all_solns_with_error}{fig:error_test1_v_case2} indicate that $\max_{0 \leq k \leq 256} E_{1,k}$ and $\max_{0 \leq k \leq 256} E_{2,k}$ are order of $10^{-6}$.
	
	\testsubsection		% produces: "Test 2"
	We now consider the setting in which the exact solutions assume the following form:
	\begin{equation*}
		u\brac{x, t} = v\brac{x, t} = A \brac{1 + \cos\brac{\frac{\lambda_1 \pi}{T} t}} \exp\brac{-\frac{\brac{2x - \ell}^2}{c^2}} \sin\brac{\frac{\lambda \pi}{\ell} x}\,.
	\end{equation*}
	
	As in the preceding benchmark problem, the temporal variable is specified on the interval $t \in \qbrac{0, 1}$, which is uniformly divided into $n = 256$ subintervals. In contrast to \testref{1}, the amplitude of the sine function now varies with both the spatial and temporal variables. For the numerical computations, the parameters appearing in the given solutions are prescribed as $A = 0.5$, $\lambda_1 = 2$, $c = 1$, and $\lambda = 19$.
	
	\begin{figure}
		\centering
		\begin{subfigure}{.49\textwidth}
			\centering
			\includegraphics[width=\textwidth,height=\textheight,keepaspectratio]{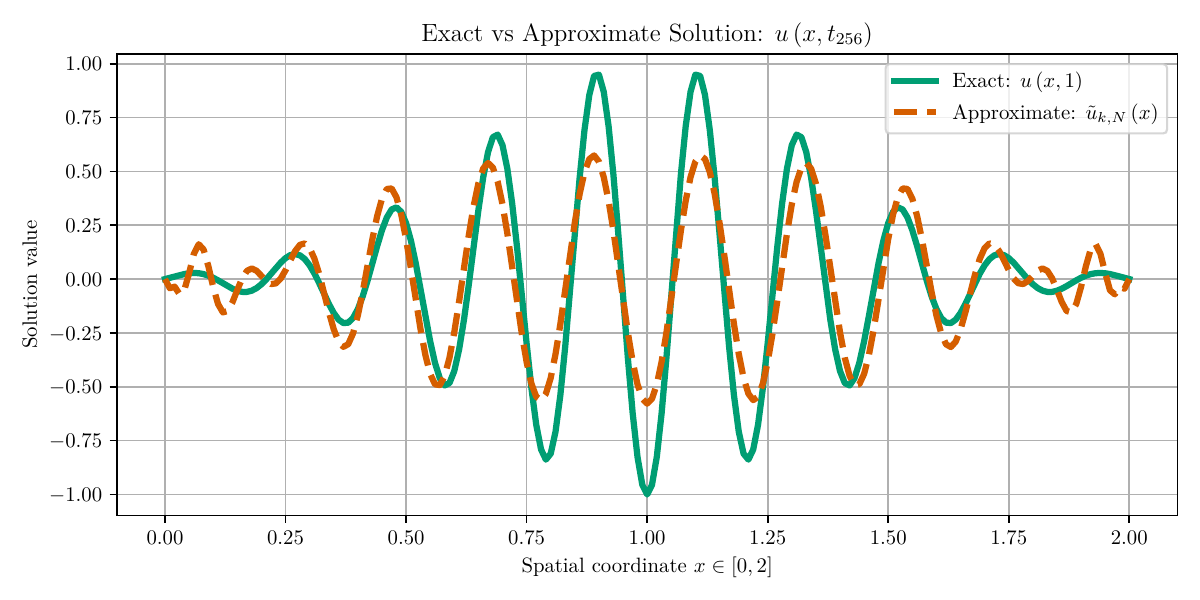}
			\caption{Depicts the solution $u\brac{x, t}$.}
			\label{fig:test2_u_case1}
		\end{subfigure}
		\hfill
		\begin{subfigure}{.49\textwidth}
			\centering
			\includegraphics[width=\textwidth,height=\textheight,keepaspectratio]{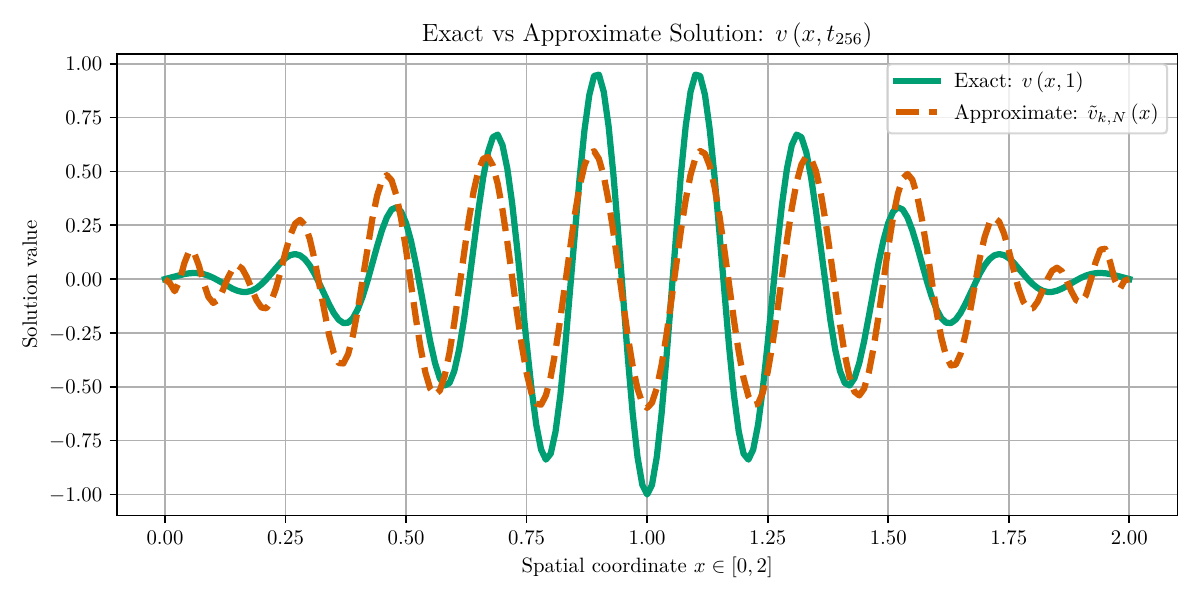}
			\caption{Depicts the solution $v\brac{x, t}$.}
			\label{fig:test2_v_case1}
		\end{subfigure}
		\caption{Exact solutions and their numerical approximations at the final time instant $t = 1$, computed with $N = 29$ trial functions.}
		\label{fig:test2_both_solns}
	\end{figure}
	
	Because the oscillation parameter $\lambda = 19$ is relatively large, employing $N = 29$ trial functions does not yield an approximation of high accuracy. On the other hand, it is essential that the numerical solution faithfully replicates the qualitative features of the exact solution (see \figref{fig:test2_both_solns}). The adopted temporal discretization (namely, the grid length $\tau = 2^{-8}$) is fully adequate for approximating the solution with respect to the temporal variable.
	
	\begin{figure}
		\centering
		\begin{subfigure}{.49\textwidth}
			\centering
			\includegraphics[width=\textwidth,height=\textheight,keepaspectratio]{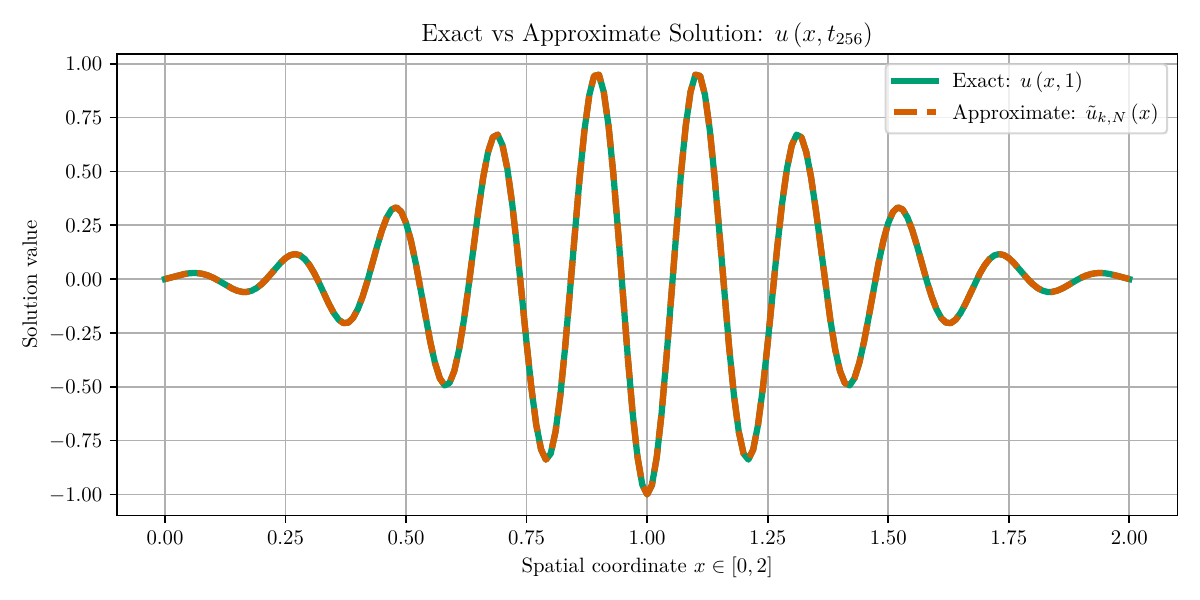}
			\caption{Solution $u$: comparison of the exact solution with its numerical approximation at the final time layer.}
			\label{fig:test2_u_case2}
		\end{subfigure}
		\hfill
		\begin{subfigure}{.49\textwidth}
			\centering
			\includegraphics[width=\textwidth,height=\textheight,keepaspectratio]{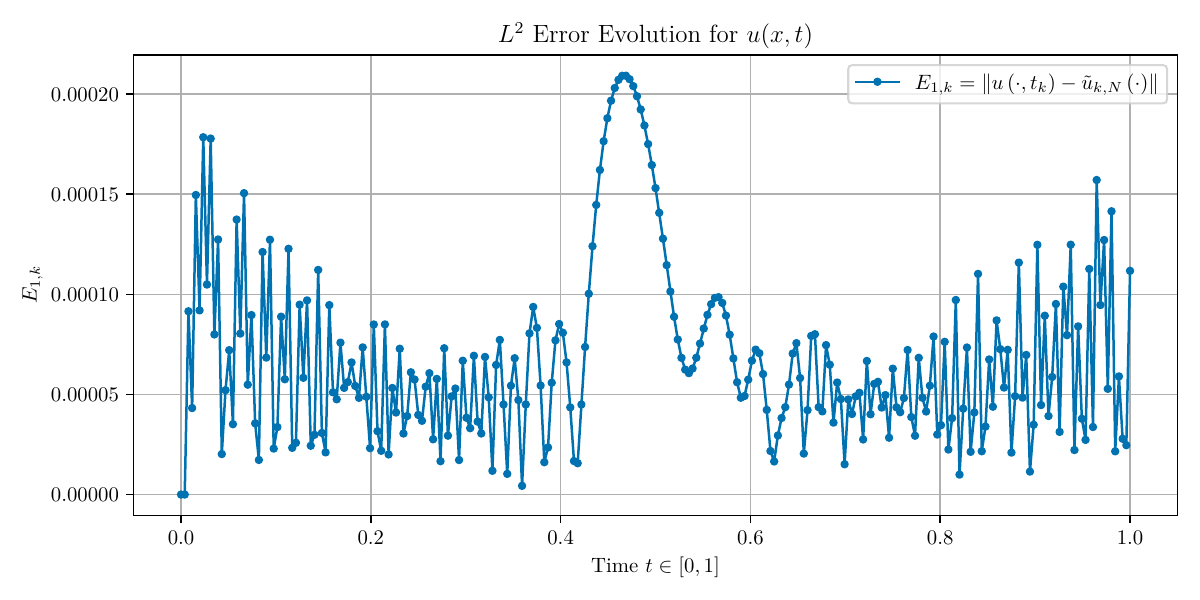}
			\caption{Temporal evolution of the error $E_{1,k}$ throughout all time layers.}
			\label{fig:error_test2_u_case2}
		\end{subfigure}
		\hfill
		\begin{subfigure}{.49\textwidth}
			\centering
			\includegraphics[width=\textwidth,height=\textheight,keepaspectratio]{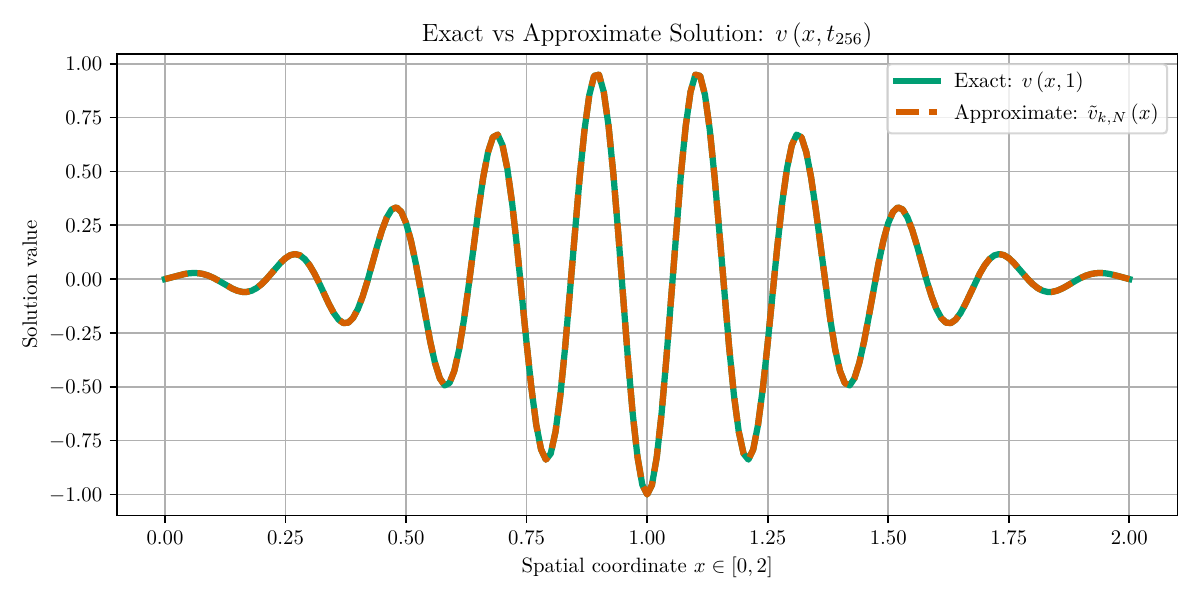}
			\caption{Solution $v$: comparison of the exact solution with its numerical approximation at the final time layer.}
			\label{fig:test2_v_case2}
		\end{subfigure}
		\hfill
		\begin{subfigure}{.49\textwidth}
			\centering
			\includegraphics[width=\textwidth,height=\textheight,keepaspectratio]{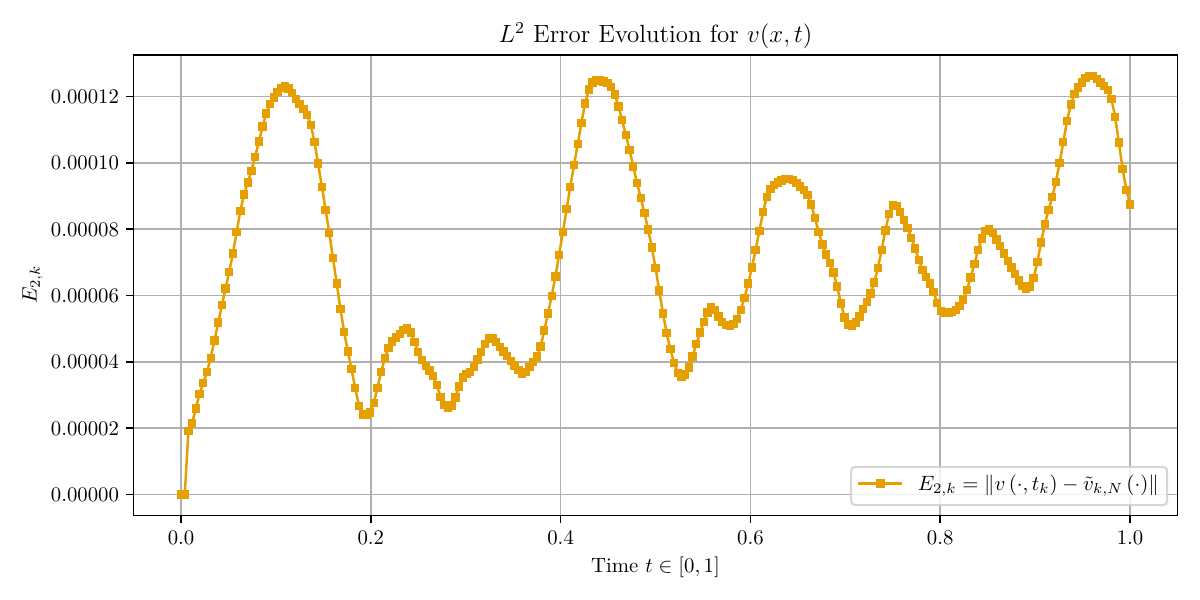}
			\caption{Temporal evolution of the error $E_{2,k}$ throughout all time layers.}
			\label{fig:error_test2_v_case2}
		\end{subfigure}
		\caption{Exact and numerical solutions at the final temporal layer for $N = 45$ trial functions, and evolution of the errors $E_{1,k}$ and $E_{2,k}$ across all time layers, illustrating the accuracy of the combined numerical scheme.}
		\label{fig:test2_all_solns_with_error}
	\end{figure}
	
	In \figref{fig:test2_all_solns_with_error}, we observe that increasing the number of approximation basis functions to $N = 45$ is sufficient to obtain numerical solutions that coincide with the exact analytical solutions with good accuracy. Furthermore, \figsubref{fig:test2_all_solns_with_error}{fig:error_test2_u_case2} and \figsubref{fig:test2_all_solns_with_error}{fig:error_test2_v_case2} demonstrate that the maximum values of the corresponding errors are approximately of the order of $10^{-4}$.
	
	\testsubsection		% produces: "Test 3"
	Let the exact solutions be given by the following functions:
	\begin{equation*}
		u\brac{x, t} = \frac{1}{4} \exp\brac{\frac{\pi}{T} t} \sin\brac{\frac{\lambda \pi}{\ell} x}\,, \quad \text{and} \quad v\brac{x, t} = \frac{1}{4} \exp\brac{\frac{\pi}{T} t} \sin\brac{\frac{\lambda \pi}{\ell} x}\,.
	\end{equation*}
	
	For this benchmark problem, unlike \testref{1} and \testref{2}, we consider a wider time frame, specifically the temporal interval satisfying $0 \leq t \leq 4$. The oscillation parameter appearing in the sine functions is prescribed as $\lambda = 5$. Initially, the uniform temporal grid spacing is chosen as $\tau = 2^{-7}$, while the number of basis functions is set to $N = 7$. The numerical results obtained under this configuration are shown in \figref{fig:test3_both_solns}.
	
	\begin{figure}
		\centering
		\begin{subfigure}{.49\textwidth}
			\centering
			\includegraphics[width=\textwidth,height=\textheight,keepaspectratio]{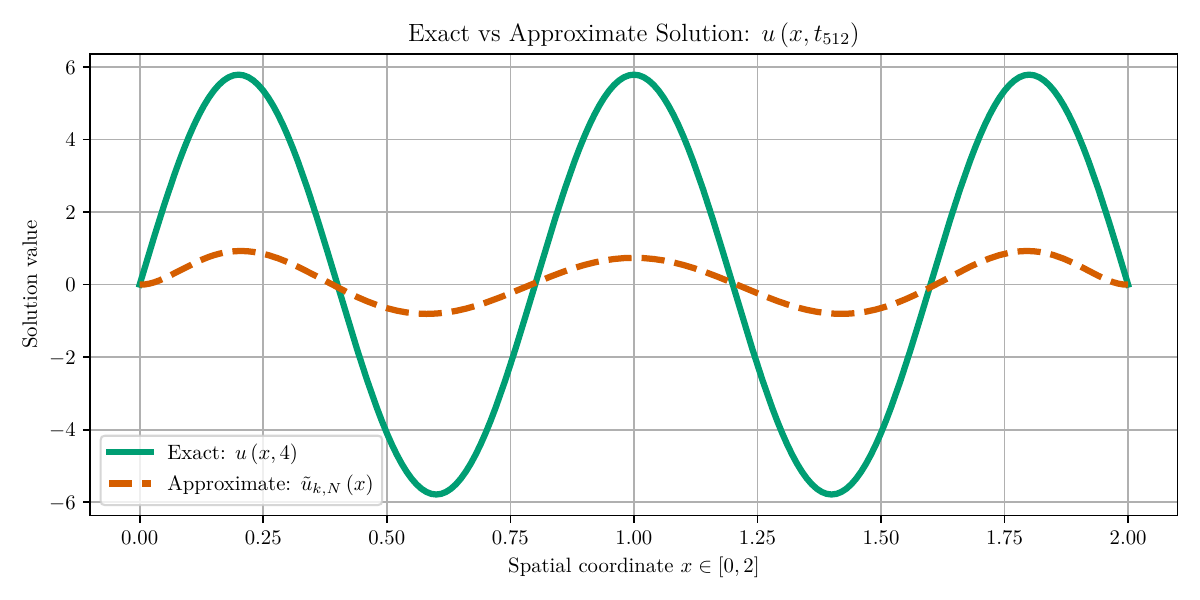}
			\caption{Illustration of the solution $u\brac{x, t}$.}
			\label{fig:test3_u_case1}
		\end{subfigure}
		\hfill
		\begin{subfigure}{.49\textwidth}
			\centering
			\includegraphics[width=\textwidth,height=\textheight,keepaspectratio]{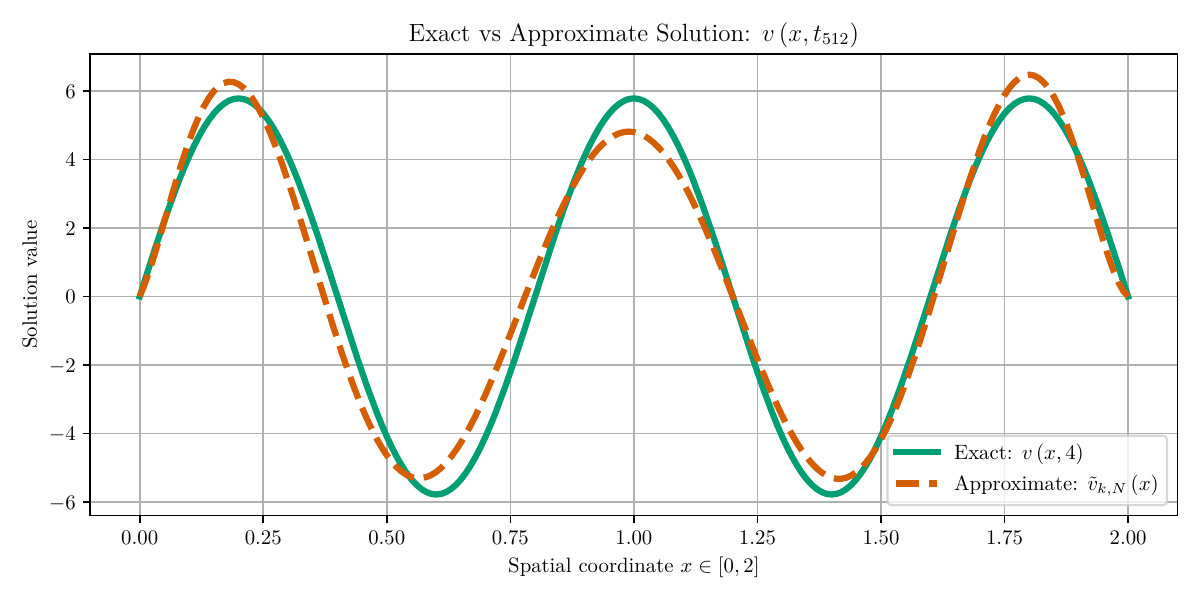}
			\caption{Illustration of the solution $v\brac{x, t}$.}
			\label{fig:test3_v_case1}
		\end{subfigure}
		\caption{Exact solutions together with their corresponding numerical approximations at the final time instant $t = 4$, obtained using $N = 7$ trial functions.}
		\label{fig:test3_both_solns}
	\end{figure}
	
	As illustrated in \figref{fig:test3_both_solns}, the chosen configuration is inappropriate for accurately approximating the exact solutions using the proposed combined numerical scheme. Although the oscillation parameter is not particularly large, the time-dependent amplitude of the sine function grows rapidly. Given these facts, it is reasonable to refine both the temporal grid spacing and the number of basis functions, thereby setting $\tau = 2^{-8}$ and $N = 15$, respectively.
	
	\begin{figure}
		\centering
		\begin{subfigure}{.49\textwidth}
			\centering
			\includegraphics[width=\textwidth,height=\textheight,keepaspectratio]{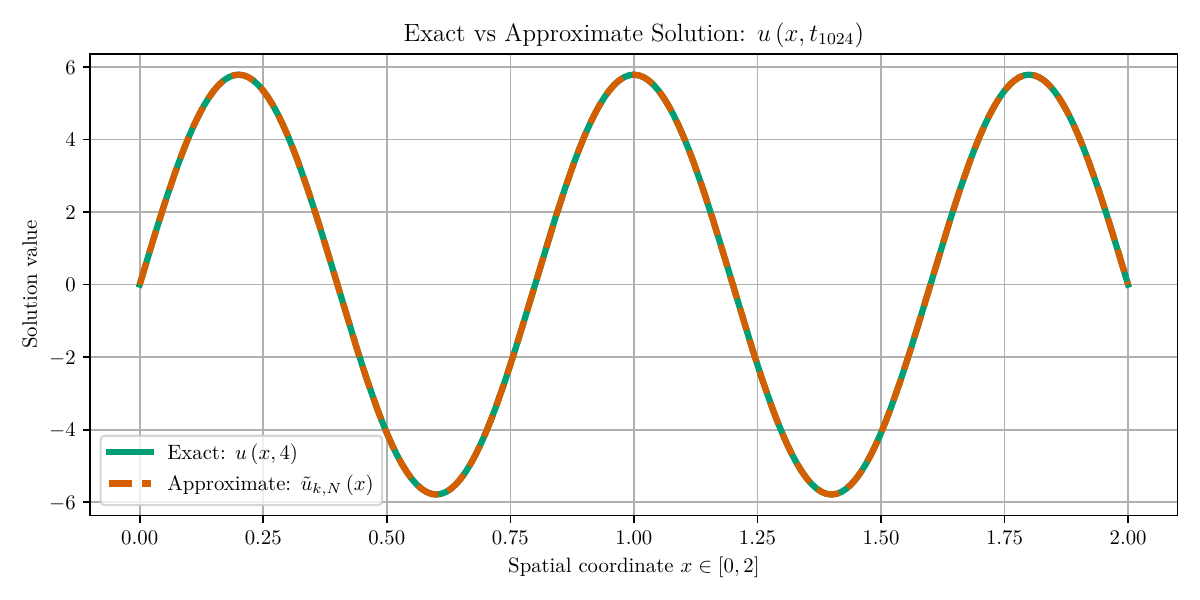}
			\caption{Exact and numerical solutions for $u\brac{x, t}$ at the final time layer $t = 4$.}
			\label{fig:test3_u_case2}
		\end{subfigure}
		\hfill
		\begin{subfigure}{.49\textwidth}
			\centering
			\includegraphics[width=\textwidth,height=\textheight,keepaspectratio]{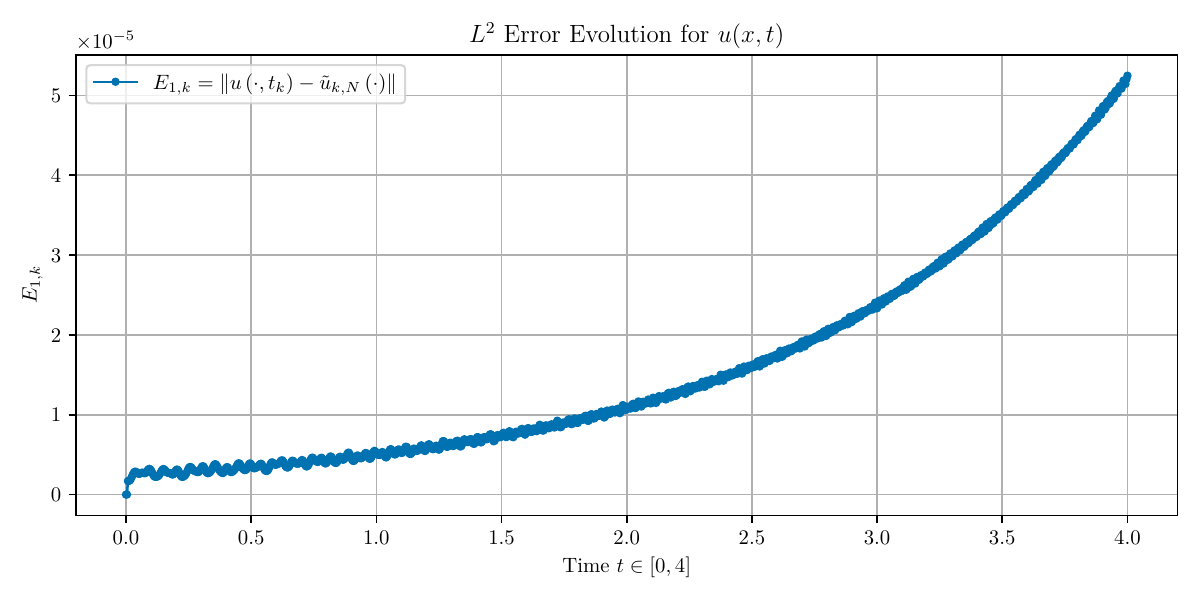}
			\caption{Evolution of the error $E_{1,k}$ over the full sequence of time layers.}
			\label{fig:error_test3_u_case2}
		\end{subfigure}
		\hfill
		\begin{subfigure}{.49\textwidth}
			\centering
			\includegraphics[width=\textwidth,height=\textheight,keepaspectratio]{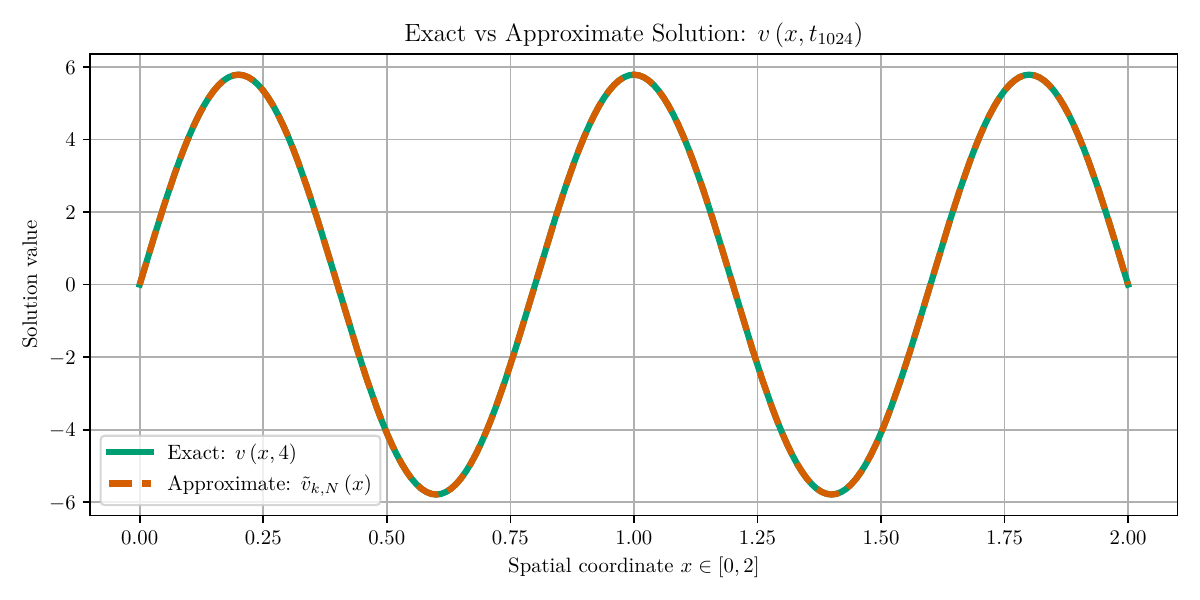}
			\caption{Exact and numerical solutions for $v\brac{x, t}$ at the final time layer $t = 4$.}
			\label{fig:test3_v_case2}
		\end{subfigure}
		\hfill
		\begin{subfigure}{.49\textwidth}
			\centering
			\includegraphics[width=\textwidth,height=\textheight,keepaspectratio]{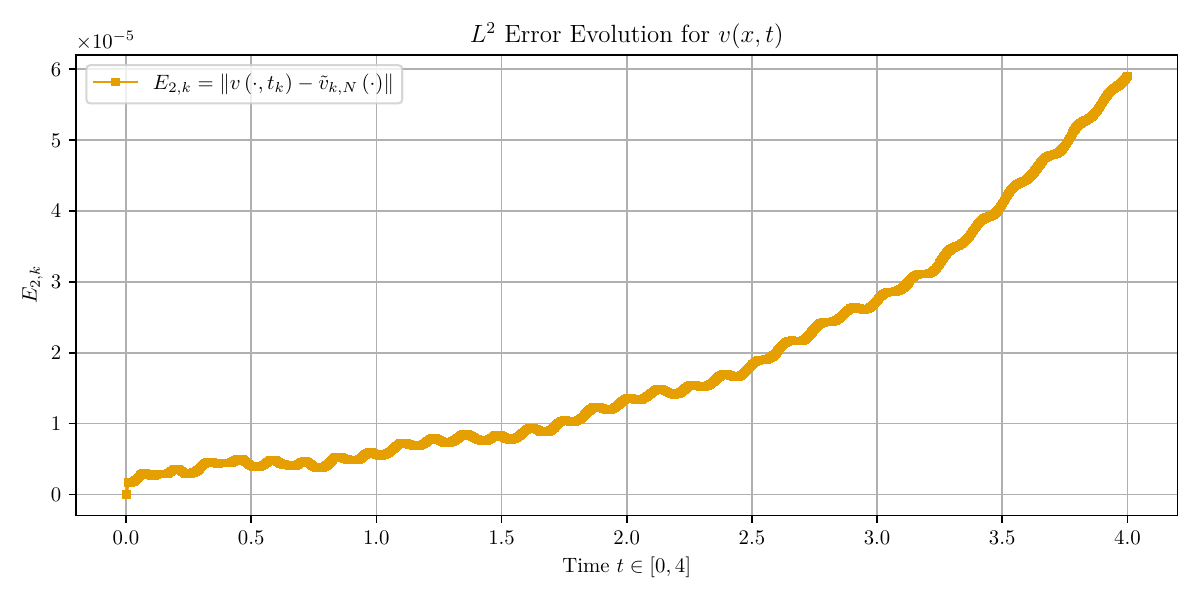}
			\caption{Evolution of the error $E_{2,k}$ over the full sequence of time layers.}
			\label{fig:error_test3_v_case2}
		\end{subfigure}
		\caption{Exact and numerical solutions at the final temporal layer $k = 1024$ for $N = 15$ trial functions, supplemented by the evolution of the errors $E_{1,k}$ and $E_{2,k}$ over all time layers, thereby demonstrating the accuracy of the combined numerical scheme.}
		\label{fig:test3_all_solns_with_error}
	\end{figure}
	
	As depicted in \figref{fig:test3_all_solns_with_error}, improving both the temporal grid length to $\tau = 2^{-8}$ and the number of basis functions to $N = 15$ proved successful, resulting in an evident similarity between the exact and approximate solutions displayed in \figsubref{fig:test3_all_solns_with_error}{fig:test3_u_case2} and \figsubref{fig:test3_all_solns_with_error}{fig:test3_v_case2}. It may be observed that the approximation errors associated with each solution are small; refer to \figsubref{fig:test3_all_solns_with_error}{fig:error_test3_u_case2} and \figsubref{fig:test3_all_solns_with_error}{fig:error_test3_v_case2}.
	
	\begin{remark*}
		It is clear that if the solution of problem \eqref{eq:timosh_spec_nonlinear}-\eqref{eq:timosh_spec_bound_conds} is the product of a polynomial in the spatial variable $x$ and a linear function in the temporal variable $t$, then the theoretical solution derived by the proposed combined scheme coincides precisely with the exact analytical solution. In this situation, the error of the numerically computed approximation is of the order of machine precision.
	\end{remark*}
	
	\testsubsection\label{bm:test4}		% produces: "Test 4"
	We consider the homogeneous reformulation of the system \eqref{eq:timosh_spec_nonlinear}-\eqref{eq:timosh_spec_linear}, endowed with the initial-boundary conditions \eqref{eq:timosh_spec_init_conds}-\eqref{eq:timosh_spec_bound_conds}, for which no exact analytical solutions are currently known. In this formulation, the spatial and temporal domains are taken to be $x \in \qbrac{0, 2}$ and $t \in \qbrac{0, 1}$, respectively.
	
	The initial data in \eqref{eq:timosh_spec_init_conds}-\eqref{eq:timosh_spec_bound_conds} are specified by the following functions:
	\begin{equation*}
		\varphi_0\brac{x} = \psi_0\brac{x} = A \exp\brac{-\frac{\brac{2x - \ell}^2}{c^2}} \sin\brac{\frac{\lambda \pi}{\ell} x} \quad \text{and} \quad \varphi_1\brac{x} = \psi_1\brac{x} = 0\,.
	\end{equation*}
	In this setup, the prefactor multiplying the sine term is a Gaussian function. For the numerical experiments presented below, the parameters of the Gaussian are chosen as follows: the amplitude is set to $A = 1$, the shift parameter to $\ell = 2$, and the width (shape) parameter to $c = 0.5$. The oscillation parameter appearing in the sine function is fixed at $\lambda = 10$. For this configuration, the graph of the function $\varphi_0\brac{x}$ is depicted in \figref{fig:test3_initial_datum_varphi}.
	
	\begin{figure}
		\centering
		\includegraphics[width=0.55\linewidth]{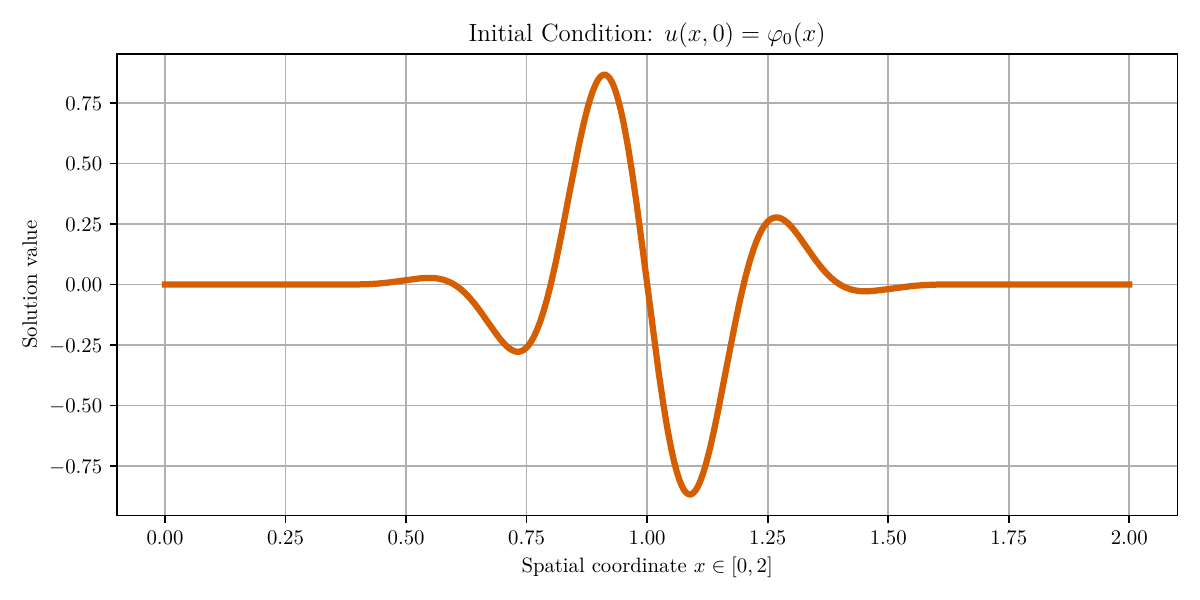}
		\caption{Graph of the initial-condition function $\varphi_0\brac{x} = \psi_0\brac{x}$}
		\label{fig:test3_initial_datum_varphi}
	\end{figure}
	
	To describe the approximate solution procedure for the problem under consideration, we introduce the notation required for the subsequent discussion. Let $n_0$ denote the initial number of subdivisions of the temporal interval $\qbrac{0,T}$, and let $N_0$ denote the initial number of basis functions. At each refinement step, the number of temporal subdivisions is doubled, while at each substep the number of basis functions is increased by one. Thus, if $n_i$ denotes the number of temporal subdivisions at the $i$-th refinement step and $N_j$ denotes the number of basis functions at the $j$-th substep, then:
	\begin{gather*}
		n_i = 2^i n_0\,,\quad t_k^{\brac{i}} = k \tau_i\,,\quad \tau_i = \frac{T}{n_i}\,,\quad \text{for}\quad i = 0,1,\ldots,\hat{i}\,. \\
		N_j = N_0 + j\,,\quad \text{for}\quad j = 0,1,\ldots,\hat{j}\,.
	\end{gather*}
	
	Let $\widetilde{u}_{k,N_j}^{\brac{i}} \brac{x}$ and $\widetilde{v}_{k,N_j}^{\brac{i}} \brac{x}$, for $k = 2,3,\ldots,n_i$, denote the solution corresponding to problem \eqref{eq:timosh_spec_nonlinear}-\eqref{eq:timosh_spec_bound_conds}. Then, according to relation \eqref{eq:galerkin_ansatzes}, we have:
	\begin{equation*}
		u \brac{x, t_k^{\brac{i}}} \approx \widetilde{u}_{k,N_j}^{\brac{i}} \brac{x} = \sum_{m = 1}^{N_j} u_{k,m}^{\brac{i,j}} \phi_m \brac{x}\,,\quad v \brac{x, t_k^{\brac{i}}} \approx \widetilde{v}_{k,N_j}^{\brac{i}} \brac{x} = \sum_{m = 1}^{N_j} v_{k,m}^{\brac{i,j}} \phi_m \brac{x}\,,
	\end{equation*}
	where the coefficients $u_{k,m}^{\brac{i,j}}$ and $v_{k,m}^{\brac{i,j}}$ are obtained by solving the associated Galerkin linear system (see \refitem{Subsection}{subsec:galerkin}) at refinement step $i$ and substep $j$.
	
	The underlying idea of the proposed procedure is as follows. For each fixed value of $n_i$, the computation in the spatial variable is continued (that is, the substep index $j$ is incremented by one) until convergence is achieved at each temporal level $k$, in the sense that the $L^2$-norms of the differences between two consecutive approximate solutions corresponding to $u$ and $v$ both become less than or equal to a prescribed tolerance $\mathrm{tol} > 0$. More precisely, the process is terminated once the conditions
	\begin{equation}\label{eq:num_res_norm_diffs}
		E_{k,j}^{\brac{i}} = \norm{\widetilde{u}_{k,N_j}^{\brac{i}} - \widetilde{u}_{k,N_{j - 1}}^{\brac{i}}} \leq \mathrm{tol} \quad \text{and} \quad \hat{E}_{k,j}^{\brac{i}} = \norm{\widetilde{v}_{k,N_j}^{\brac{i}} - \widetilde{v}_{k,N_{j - 1}}^{\brac{i}}} \leq \mathrm{tol}
	\end{equation}
	are simultaneously satisfied for each temporal layer $k = 2,3,\ldots,n_i$.
	
	If the convergence criterion fails to be satisfied during the spatial iteration, namely upon incrementing the iteration index $j$ by one, then the temporal discretization is refined by doubling the number of time-step subdivisions, and the procedure is restarted from the initial stage. The indices $\hat{i}$ and $\hat{j}$ are fixed positive integers, typically chosen in accordance with the constraints imposed by available computational resources, feasible runtime bounds, and the effective stability margins of the proposed combined scheme.
	
	\begin{remark*}
		The $L^2$-norms of the differences between two successive approximate solutions defined in \eqref{eq:num_res_norm_diffs} are computed by the following formulas:
		\begin{equation*}
			\norm{\widetilde{u}_{k,N_j}^{\brac{i}} - \widetilde{u}_{k,N_{j - 1}}^{\brac{i}}} = \frac{\ell}{2} \sqrt{\vm{e}_k^{\top} \vm{\H}_{N_j} \vm{e}_k}\,,\quad \norm{\widetilde{v}_{k,N_j}^{\brac{i}} - \widetilde{v}_{k,N_{j - 1}}^{\brac{i}}} = \frac{\ell}{2} \sqrt{\vm{\hat{e}}_k^{\top} \vm{\H}_{N_j} \vm{\hat{e}}_k}\,.
		\end{equation*}
		In these expressions, $\vm{\H}_{N_j}$ denotes the matrix introduced in \refitem{Subsec.}{subsec:galerkin}. Recall that $\vm{\H}_{N_j}$ is a symmetric tridiagonal matrix with a gap: its nonzero entries are $C_m$, $m = 1,2,\ldots,N_j$, on the main diagonal, and $-B_m$, $m = 2,3,\ldots,N_j - 1$, on the second off-diagonals. In addition, the components of the vector $\vm{e}_k = \tps{\brac{e_{k,1},e_{k,2},\ldots,e_{k,N_j}}}$ are given by
		\begin{equation*}
			e_{k,m} =
			\left\{
			\begin{array}{rl}
				u_{k,m}^{\brac{i,j}} - u_{k,m}^{\brac{i,j - 1}}\,, & \hbox{if $m \leq N_{j - 1}$,} \\
				u_{k,m}^{\brac{i,j}}\,, & \hbox{if $m > N_{j - 1}$.}
			\end{array}
			\right.
		\end{equation*}
		The vector $\vm{\hat{e}}_k$ is defined analogously.
	\end{remark*}
	The preceding formula follows immediately from the standard properties of the Legendre polynomials. Indeed, we have:
	\begin{align*}
		\norm{\widetilde{u}_{k,N_j}^{\brac{i}} - \widetilde{u}_{k,N_{j - 1}}^{\brac{i}}}^2 &= \norm{\sum_{m = 1}^{N_j} e_{k,m} \phi_m}^2 = \sum_{m = 1}^{N_j} e_{k,m} \sum_{s = 1}^{N_j} e_{k,s} \brac{\phi_s, \phi_m} \\
		&= \frac{{\ell}^2}{4} \sum_{m = 1}^{N_j} e_{k,m} \brac{- B_{m - 1} e_{k,m - 2} + C_m e_{k,m} - B_{m + 1} e_{k,m + 2}} = \frac{{\ell}^2}{4} \vm{e}_k^{\top} \vm{\H}_{N_j} \vm{e}_k\,.
	\end{align*}
	
	In the numerical experiments, the computational algorithm is initialized with $n_0 = 2048$ time steps and $N_0 = 41$ spatial (Galerkin) modes. Time refinement is controlled by the admissible exponent $\hat{i} = 1$ (yielding a maximal refinement level of $n_1 = 4096$), while spatial refinement is restricted to a maximum index $\hat{j} = 4$ (corresponding to an upper limit of $N_4 = 45$). The stopping criterion of the algorithm is based on the $L^2$-norm difference, with the tolerance set to $\mathrm{tol} = 10^{-4}$.
	
	\begin{figure}
		\centering
		\begin{subfigure}{.49\textwidth}
			\centering
			\includegraphics[width=\textwidth,height=\textheight,keepaspectratio]{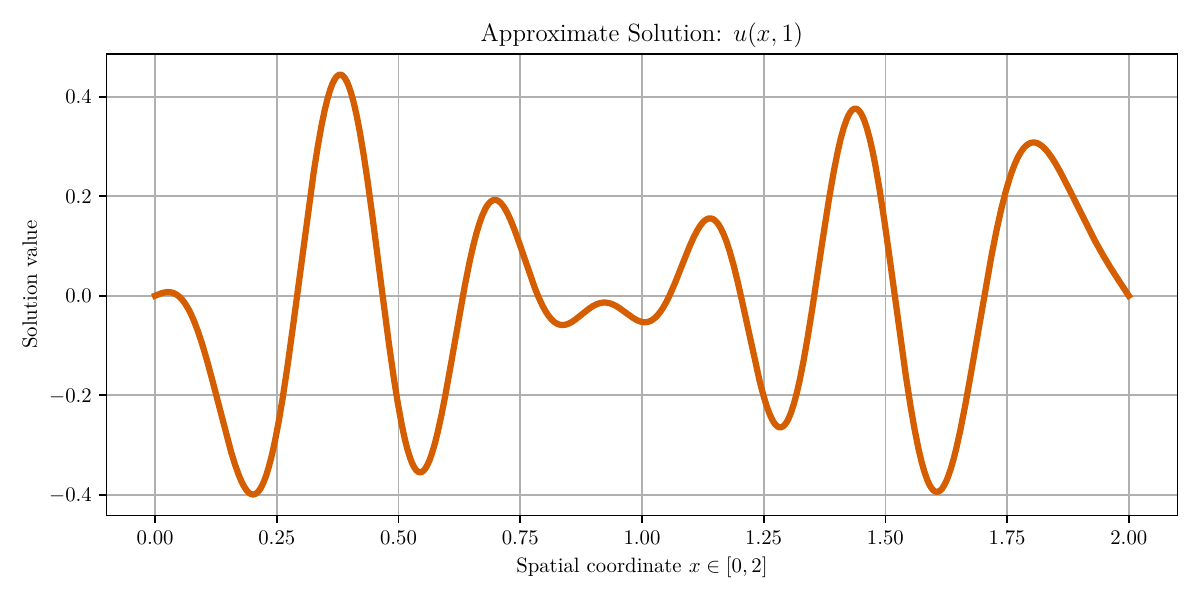}
			\caption{$t = 1.00$, $E_{2048,1}^{\brac{0}} \approx 10^{-5}$.}
			\label{fig:test4_u_t2048_N42}
		\end{subfigure}
		\hfill
		\begin{subfigure}{.49\textwidth}
			\centering
			\includegraphics[width=\textwidth,height=\textheight,keepaspectratio]{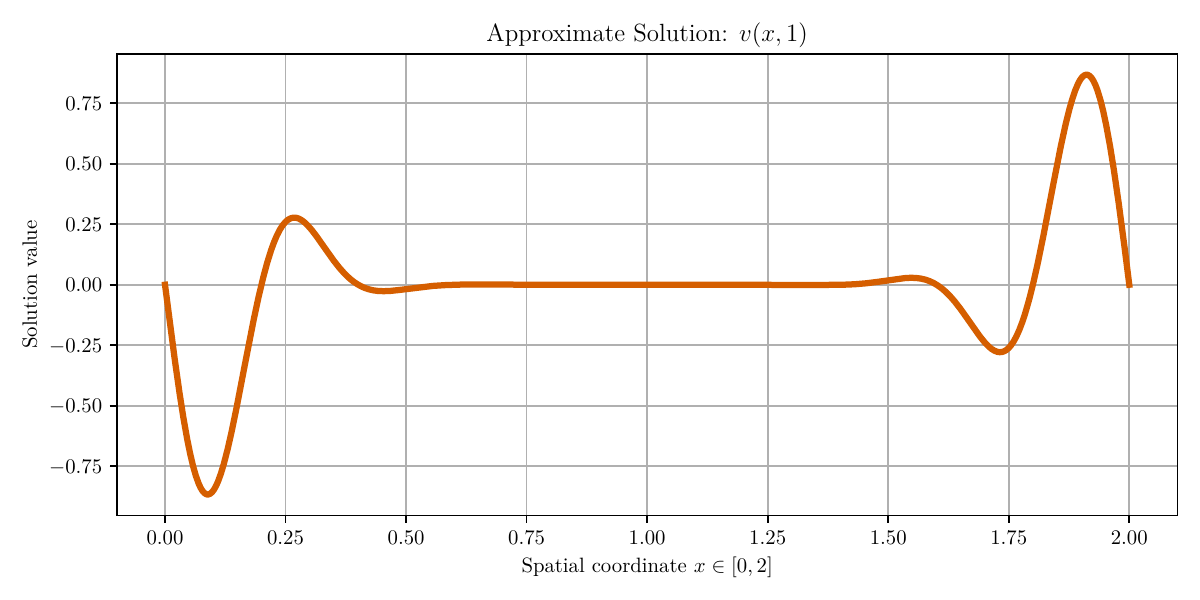}
			\caption{$t = 1.00$, $\hat{E}_{2048,1}^{\brac{0}} \approx 10^{-5}$.}
			\label{fig:test4_v_t2048_N42}
		\end{subfigure}
		\caption{Profiles of the resulting wave-type solutions $u$ and $v$ at the time level $t = 1.00$. The left and right panels display $u \brac{x,1}$ and $v \brac{x,1}$, respectively.}
		\label{fig:test4_uv_solns_layers}
	\end{figure}
	
	For a given tolerance, the computational procedure is terminated when the parameters attain the values $n_0 = 2048$ and $N_1 = 42$, corresponding to the temporal refinement level $i = 0$ and the spatial iteration index $j = 1$, respectively. Numerical experiments conducted for different tolerance values, namely $\mathrm{tol} = 10^{-s}$ with $s = 1,2,3,4$, yield results whose qualitative structures are consistent with those illustrated, for instance, in \figref{fig:test4_uv_solns_layers} for the case $\mathrm{tol} = 10^{-4}$ at the time level $t = 1$. The observed stabilization thus provides strong evidence for the robustness and reliability of the computed numerical solutions.
	
	Note that the functions defining the initial conditions of the problem at hand are symmetric, and the system is considered in the homogeneous setting, {\ie}, in the absence of source terms. Therefore, one should expect the solution of the system to inherit a certain symmetry. This property is indeed observed in the numerical results displayed in \figref{fig:test4_uv_solns_layers}.
	
	Initially, the wave packets are localized near the center of the string; see \figref{fig:test3_initial_datum_varphi}. Then, according to linear theory, the initial profile splits into two equal parts that propagate in opposite directions: one to the left and the other to the right. Thus, at the time level $t = 1$, the two halves of the initial profile have moved away from the center. Therefore, the central part of the string is almost at the equilibrium position; see \figsubref{fig:test4_uv_solns_layers}{fig:test4_v_t2048_N42}. Furthermore, as depicted in \figsubref{fig:test4_uv_solns_layers}{fig:test4_v_t2048_N42}, when these separated parts reach the fixed endpoints, reflection with a change of sign occurs. Since we are solving a coupled system and have shown that the solution $v$ corresponding to the linear equation is physically realistic, we may conclude that the solution $u$, corresponding to the nonlinear equation, is also physically realistic.
	
	\section*{Acknowledgments}
	\noindent The authors sincerely thank Dr. Andreas A. Buchheit and Dr. Daniel Seibel for their careful reading of the initial version of the manuscript and for their insightful comments, which helped improve the paper. We also wish to express our gratitude to Dr. Giorgi Rukhaia for his fruitful remarks during the development of the programming code for the proposed algorithm. Furthermore, the authors would like to thank the anonymous reviewers for their helpful comments, which contributed to improving the final version of the article.
	
	\section*{Funding statement}
	\noindent The second author, Z.V., was supported by the Shota Rustaveli National Science Foundation of Georgia (SRNSFG) under grant number FR-25-215.
	
	\section*{Conflict of interest}
	\noindent The authors declare that they have no conflict of interest.
	
	\section*{Data availability statement}
	\noindent The research data associated with this work are included in the article itself. The source code used for the implementation of the proposed algorithm is publicly available on GitHub \url{https://github.com/zv1991/abstract_timoshenko_semidiscrete_scheme} and in an open-access Zenodo repository \cite{RogVashCode2026}.

	% ------------------------------------------------------------
	% Float Barriers (placeins)
	% ------------------------------------------------------------
	% \FloatBarrier  % Insert a manual float barrier (use only if \usepackage{placeins} is loaded WITHOUT the [section] option)
	
	% ------------------------------------------------------------
	% Bibliography Environment
	% ------------------------------------------------------------
	\phantomsection                % Ensure correct hyperlink anchor for references (used with hyperref)
	\begingroup                    % Localize relaxed settings for bibliography only
	\sloppy                        % Loosen justification to prevent overfull lines in references
	\bibliography{bibsource}       % Load references from "bibsource.bib"
	\endgroup                      % Restore normal settings after bibliography
	
	% ------------------------------------------------------------
	% Restore Default Justification
	% ------------------------------------------------------------
	\fussy                         % Re-enable strict justification for the rest of the document
	
	% % ------------------------------------------------------------
	% % Create a Colored and Framed Box (with Light Background)
	% % ------------------------------------------------------------
	% \begin{tcolorbox}[
		%     colback=gray!5,              % Very light gray background (5% gray)
		%     colframe=black!75!gray,      % Frame color: 75% black + 25% gray
		%     boxrule=0.3mm,               % Border thickness
		%     arc=3mm,                     % Rounded corners
		%     width=\textwidth,            % Full text width
		%     enlarge left by=0mm,         % No enlargement on the left
		%     enlarge right by=0mm         % No enlargement on the right
		%     ]
		%     % ------------------------------------------------------------
		%     % Box Content: Justified Citation Text
		%     % ------------------------------------------------------------
		%     \begin{justify}
			%         \textbf{How to cite this article:} \\[0.5em]
			%         Rogava, J., Vashakidze, Z.:
			%         On convergence of a three-layer semi-discrete scheme for the non-linear dynamic string equation of Kirchhoff-type with time-dependent coefficients.
			%         \textit{Z. Angew. Math. Mech.} e202300608 (2024). \href{https://doi.org/10.1002/zamm.202300608}{https://doi.org/10.1002/zamm.202300608}
			%     \end{justify}
		% \end{tcolorbox}
	
\end{document}